\documentclass[12pt,a4paper]{article}
\usepackage[utf8]{inputenc}
\usepackage[T1]{fontenc}
\usepackage[english]{babel}
\usepackage{amsmath, amssymb, amsthm}
\usepackage{mathrsfs}
\usepackage{hyperref}
\usepackage{geometry}
\usepackage{tikz}
\usepackage{pgfplots}
\usepackage{enumitem}
\newcommand{\Renyi}{\mathcal{R}_\varepsilon}
\newcommand{\Wp}{\mathcal{W}_{p(\cdot)}}
\newcommand{\Wb}{\mathcal{W}_2}
\newcommand{\Wt}{\mathcal{W}_2}
\newcommand{\vol}{\operatorname{vol}}
\newcommand{\Ric}{\operatorname{Ric}}
\newcommand{\Hess}{\operatorname{Hess}}
\newcommand{\supp}{\operatorname{supp}}
\newcommand{\Ent}{\operatorname{Ent}}
\newcommand{\I}{\mathcal{I}}
\newcommand{\grad}{\nabla}
\newcommand{\divergence}{\nabla\cdot}

\newcommand{\CD}{\operatorname{CD}}

\newtheorem{theorem}{Theorem}[section]
\newtheorem{lemma}[theorem]{Lemma}
\newtheorem{proposition}[theorem]{Proposition}
\newtheorem{corollary}[theorem]{Corollary}
\newtheorem{definition}[theorem]{Definition}
\newtheorem{remark}[theorem]{Remark}
\newtheorem{assumption}[theorem]{Assumption}

\title{Variable Exponent Wasserstein Spaces: Stability of Entropy Convexity and Modified R\'enyi Entropy}
\author{Ambroise Soglo$^1$, Cyrille Comb\'et\'e$^1$, Koffi W. Hou\'edanou$^2$ and  Léonard Todjihound\'e$^1$
	\\
	$^1$ Institut de Mathématiques et de Sciences Physiques (IMSP)\\
	$^2$ D\'epartement de Mathématiques \\ \small{Université d'Abomey-Calavi (UAC), Bénin}
}

\date{\today}

\begin{document}
	
	\maketitle
	
	\begin{abstract}
		We study the Wasserstein space $\mathcal{P}(M)$ equipped with a distance $\Wp$ constructed from the Lagrangian $L(x,v)=|v|^{p(x)}$ where $p(x)=2+\varepsilon(x)$ with $\varepsilon$ small. Building on the fundamental work of Lott and Villani on the $K$-geodesic convexity of the Boltzmann entropy in $(\mathcal{P}(M),\Wb)$, we establish a generalized inequality showing that the entropy remains $\left(K - C\|\varepsilon\|_\infty\right)$-convex along $\Wp$-geodesics. We then introduce a modified R\'enyi entropy that exactly compensates the logarithmic divergence that appears in the expansions of $\Wp^2$, obtaining thus a sharp equivalence that reaveals the Bakry-\'Emery tensor as the effective curvature in the variable exponent setting. As applications, we derive perturbed versions of the Log-Sobolev and Talagrand inequalities in variable exponent Wasserstein spaces, showing that these fundamental functional inequalities are robust under small perturbations of the transport exponent. This work generalizes the Lott-Villani theorem and its consequences (\emph{J. Lott and C. Villani, Ann. of Math. \textbf{169} (2009), 903-991}) to situations where the transport metric varies spatially.
	\end{abstract}
	\footnote{\emph{The authors gratefully acknowledge the Institut de Mathématiques et de Sciences
		Physiques (IMSP), Université d'Abomey-Calavi, Bénin, for its continuous
		institutional and scientific support. The research environment and resources
		provided by the IMSP were instrumental in the development of this work.\\
	Emails: A. Soglo ({\color{blue}ambroiso.soglo@gmail.com}), C. Combété ({\color{blue}cyrille.combete@imsp-uac.org}), K. W. Houédanou ({\color{blue}khouedanou@yahoo.fr}), L. Todjihoundé ({\color{blue}leonard.todjihounde@imsp-uac.org})}}.
	\tableofcontents
	
	\section{Introduction}
	
	\subsection{Historical context and motivation}
	
	The interaction between optimal transport, Ricci curvature, and entropy has revolutionized our understanding of metric measure spaces. A fundamental result of this theory is the Lott-Villani theorem \cite{LottVillani2009}, which establishes a profound equivalence: a Riemannian manifold $(M,g)$ has Ricci curvature bounded below by $K$ if and only if the Boltzmann entropy $H(\rho)=\int \rho \log \rho \, d\vol_g$ is $K$-convex along geodesics in the $L^2$ Wasserstein space $(\mathcal{P}_2(M), \Wb)$. This result, together with the independent work of Sturm \cite{Sturm2006a}, laid the foundations for a synthetic notion of Ricci curvature for metric measure spaces.
	
	The $L^2$ Wasserstein space itself is a remarkable object. Otto \cite{Otto2001} revealed its formal Riemannian structure, where tangent vectors are identified with gradients of functions, and the metric tensor is given by
	\[
	\langle \mu, \mu \rangle_\rho = \int |\nabla \phi|^2 \rho \, d\vol_g \quad \text{for } \mu = -\divergence(\rho \nabla \phi).
	\]
	This framework has been instrumental in interpreting many partial differential equations, such as the heat equation, as gradient flows of entropy.
	
	A natural generalization consists of considering optimal transport problems with not necessarily quadratic costs. For constant $p > 1$, the $p$-Wasserstein space $(\mathcal{P}_p(M), W_p)$ has been extensively studied. It possesses a Finsler structure \cite{Marcos2011} and, surprisingly, the synthetic curvature-dimension conditions $\CD_p(K,N)$ have been shown to be independent of $p$ \cite{CavallettiSantarcangelo2021, Santarcangelo2020}. This suggests a deep robustness of the geometric information encoded in optimal transport.
	
	\subsection{The variable exponent case}
	
	This paper takes a further step by studying the case where the exponent $p$ varies with the spatial position $x$. This is motivated by applications in heterogeneous media, where the "cost" of mass transport may depend on the properties of the material at the origin or destination. The variable exponent Wasserstein space $(\mathcal{P}(M), \Wp)$ was introduced by Marcos and Soglo \cite{MarcosSoglo2019}, who established its Finsler structure and studied gradient flows related to the $q(x)$-Laplacian.
	
	The central question we address is: if $\Ric_g \ge K$, can we still assert that the Boltzmann entropy $H$ is $K$-convex along $\Wp$-geodesics? If not, what is the effective convexity constant? And does the converse hold?
	
	Our investigation reveals that the Boltzmann entropy alone is insufficient. The obstruction is a logarithmic divergence that appears in the expansion of $\Wp^2$ due to the variable exponent.
	In indeed, the Rényi entropy, which generalizes the Boltzmann entropy by introducing a power-law weighting of the density, plays a central role in our analysis. Unlike the Boltzmann entropy--which suffices in the classical quadratic setting of Lott and Villani \cite{LottVillani2009}--it turns out that the Boltzmann entropy alone is insufficient in the variable exponent setting, due to a logarithmic divergence that appears in the expansion of $\Wp^2$. To restore the sharp equivalence, we are led to introduce a modified Rényi entropy that exactly compensates this obstruction:
	\[
	\Renyi(\rho) = -n\log\int_M \rho(x)^{1-\frac1n} e^{\frac n2\varepsilon(x)}\,d\vol_g(x),
	\]
	where $\varepsilon(x)=p(x)-2$. This entropy is precisely designed to cancel the logarithmic divergence.
	
	\subsection{Main contributions}
	
	The contributions of this article are fourfold:
	
	\begin{enumerate}[label=(\roman*)]
		
		\item \textbf{Stability of entropy convexity (Theorem \ref{thm:direct1})}: 
		We prove that if the Riemannian manifold satisfies $\Ric_g \ge K$, then for any geodesic $(\rho_t)$ of the variable exponent Wasserstein space $\Wp$, the Boltzmann entropy satisfies a perturbed convexity inequality:
		\begin{equation}
			H(\rho_t) \leq (1-t)H(\rho_0) + t H(\rho_1) - \frac{K}{2}t(1-t)\Wp^2 + C\|\varepsilon\|_\infty t(1-t)\Wp^2 + o(\|\varepsilon\|_\infty).
		\end{equation}
		This result demonstrates that the entropy remains $(K - C\|\varepsilon\|_\infty)$-convex along $\Wp$ geodesics, establishing the robustness of the Lott-Villani theorem under small perturbations of the transport exponent.
		
		\item \textbf{Modified R\'enyi entropy and sharp equivalence (Theorem \ref{thm:main})}: 
		We identify a fundamental obstruction: the Boltzmann entropy alone fails to capture the exact Ricci curvature bound due to a logarithmic divergence in the expansion of $\Wp^2$. To overcome this obstruction, we introduce a modified R\'enyi entropy:
		\[
		\Renyi(\rho) = -n\log\int_M \rho(x)^{1-\frac1n} e^{\frac n2\varepsilon(x)}\,d\vol_g(x),
		\]
		which exactly compensates the logarithmic singularity. We prove the sharp equivalence:
		\[
		\Ric_g + \nabla^2\varepsilon \ge K \quad \Longleftrightarrow \quad \Renyi \text{ is } K\text{-convex along } \Wp-\text{ geodesics up to } O\left(\|\varepsilon\|_{C^2}\Wp^2\right).
		\]
		This reveals that the effective curvature in the variable exponent setting is the Bakry-\'Emery tensor $\Ric_g + \nabla^2\varepsilon$.
		
		\item \textbf{Quadratic expansion of the Finsler metric (Proposition \ref{prop:expansion})}: 
		We establish a precise asymptotic relation between the variable exponent Finsler metric $F_{p(\cdot)}$ and the classical Riemannian metric $F_2$:
		\[
		F_{p(\cdot)}^2(\rho,\mu) = F_2^2(\rho,\mu) + \int_M |v_0|^2 \varepsilon(x)\ln|v_0| \rho \, d\vol_g + O(\|\varepsilon\|_\infty^2 F_2^2).
		\]
		This expansion quantifies the logarithmic correction induced by the variable exponent and is fundamental for understanding how the variable exponent affects the underlying geometry.
		
		\item \textbf{Perturbed functional inequalities (Theorems \ref{thm:logsob} and \ref{thm:talagrand})}: 
		As applications of our stability result, we derive perturbed versions of the classical Log-Sobolev and Talagrand inequalities in the variable exponent setting:
		\begin{align}
			\Ent(\rho) &\le \frac{1}{2(K - C\|\varepsilon\|_\infty)} \I(\rho) + o(\|\varepsilon\|_\infty), \\
			\Wp^2(\rho, \mu) &\le \frac{2}{K - C\|\varepsilon\|_\infty} \Ent(\rho) + o(\|\varepsilon\|_\infty).
		\end{align}
		These results demonstrate that fundamental functional inequalities are robust under small perturbations of the transport exponent, with an explicit degradation of the constant proportional to $\|\varepsilon\|_\infty$.
		
	\end{enumerate}
	\subsection{Outline of the paper}
	
	The paper is organized as follows. \\\\Section \ref{s1} recalls the fundamental results of Lott and Villani on the $K$-convexity of the Boltzmann entropy in the quadratic Wasserstein space $(\mathcal{P}(M), \Wb)$. We present the Benamou-Brenier formula, the second variation of entropy, and the Bakry-\'Emery approach, which serve as the foundation for our analysis. \\\\Section \ref{s2} establishes the functional framework for variable exponent spaces. We introduce the Luxemburg norm formulation, define the variable exponent Wasserstein distance $\Wp$, and state the necessary assumptions on the exponent $p(x)=2+\varepsilon(x)$ and on the densities. We then prove the existence, uniqueness, and regularity of $\Wp$-geodesics, relying on the compactness properties of variable exponent Sobolev spaces and the variational method of Marcos and Soglo \cite{MarcosSoglo2019}. The well-posedness of the variable exponent Hamilton-Jacobi equation is also discussed, following the variational approach of Figalli, Gangbo and Yolcu \cite{FigalliGangboYolcu2011}. \\\\Section \ref{s3} presents a rigorous perturbative analysis. We show that $\Wp$-geodesics remain close to $\Wb$-geodesics when $\|\varepsilon\|_\infty$ is sufficiently small. This is achieved through energy estimates and an interpolation argument, yielding quantitative bounds on the velocity difference and on the Wasserstein distance between the two geodesics (Theorem \ref{thm:stability}).\\\\ Section \ref{s4} derives the quadratic expansion of the Finsler metric. We provide a detailed proof of the expansion $F_{p(\cdot)}^2 = F_2^2 + \int |v_0|^2 \varepsilon \ln|v_0| \rho + O(\varepsilon^2)$, which reveals the logarithmic correction term inherent to the variable exponent structure. \\\\Section \ref{s5} studies the stability of entropy convexity. We establish the regularity of entropy along geodesics, provide a local expansion, and prove the direct stability theorem (Theorem \ref{thm:direct1}) showing that the Boltzmann entropy remains $(K - C\|\varepsilon\|_\infty)$-convex. We also prove the converse theorem (Theorem \ref{thm:reciproque1}) showing that such quasi-convexity implies a lower bound on the Ricci curvature. \\\\Section \ref{s6} introduces the modified R\'enyi entropy and proves the main equivalence theorem. We compute the Hessian of $\Renyi$ along $\Wp$-geodesics (Proposition \ref{prop:hessian}), establish the direct and converse implications (Theorems \ref{thm:converse} and \ref{thm:main}), and conclude that $\Ric_g + \nabla^2\varepsilon \ge K$ if and only if $\Renyi$ is $K$-convex up to $O(\|\varepsilon\|_{C^2}\Wp^2)$ (Theorem \ref{thm:main}). \\\\Section \ref{s7} applies these results to derive perturbed Log-Sobolev and Talagrand inequalities (Theorems \ref{thm:logsob} and \ref{thm:talagrand}). We show that the classical functional inequalities survive with degraded constants, demonstrating the robustness of these estimates under perturbations of the transport exponent. \\\\Section \ref{s8} discusses the connection with curvature-dimension conditions $\CD(K,N)$, highlighting how our results fit into the synthetic theory of Ricci curvature and indicating that the $\CD(K,N)$ condition is robust under small perturbations of the transport metric. \\\\Section \ref{s9} provides a numerical illustration on the circle $\mathbb{S}^1$ with $p(\theta)=2+\varepsilon\cos\theta$, comparing the entropy along $\Wp$-geodesics for $\varepsilon = 0$ and $\varepsilon = 0.1$. The numerical results confirm the theoretical predictions, showing that convexity is preserved with a small shift consistent with $K - C\|\varepsilon\|_\infty$. \\\\Section \ref{s10} discusses the technical limitations of our approach (smallness of $\varepsilon$, regularity assumptions, compactness of $M$) and presents open problems, including the optimality of constants, extension to non-compact manifolds, generalization to more general costs, and discrete settings.\\\\ Section \ref{s11} concludes the paper with a summary of the key findings and an outlook on future research directions.
	
	\section{Preliminaries: The Lott-Villani theorem}\label{s1}
	
	\subsection{The quadratic Wasserstein space}
	
	Let $(M,g)$ be a compact, connected, smooth Riemannian manifold of dimension $n \geq 1$, without boundary. Denote by $d_g$ the Riemannian distance, by $\vol_g$ the Riemannian volume measure, and by $\exp_x: T_xM \to M$ the exponential map. The injectivity radius $\operatorname{inj}(M) > 0$ is positive due to compactness.
	
	Denote by $\mathcal{P}_2(M)$ the space of Borel probability measures on $M$ with finite second moment. For $\mu,\nu \in \mathcal{P}_2(M)$, the quadratic Wasserstein distance is defined by
	\[
	\Wb^2(\mu,\nu) = \inf_{\pi \in \Pi(\mu,\nu)} \int_{M \times M} d_g(x,y)^2 \, d\pi(x,y),
	\]
	where $\Pi(\mu,\nu)$ denotes the set of couplings between $\mu$ and $\nu$.
	
	\begin{theorem}[Benamou-Brenier formula \cite{BenamouBrenier2000}]
		For absolutely continuous measures $\mu_0,\mu_1 \in \mathcal{P}_2(M)$,
		\[
		\Wb^2(\mu_0,\mu_1) = \inf_{(\rho_t,v_t)} \left\{ \int_0^1 \int_M |v_t|_{g}^2 \rho_t \, d\vol_g \, dt : \partial_t \rho_t + \divergence_g(\rho_t v_t)=0, \ \rho_0=\mu_0,\ \rho_1=\mu_1 \right\}.
		\]
	\end{theorem}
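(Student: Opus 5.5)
The plan is to prove the two inequalities $\Wb^2 \le \inf$ and $\inf \le \Wb^2$ separately, working on the compact manifold $M$ with $\mu_0,\mu_1$ absolutely continuous.

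\emph{Lower bound on the action.} Take any admissible pair $(\rho_t,v_t)$ solving $\partial_t\rho_t+\divergence_g(\rho_t v_t)=0$ in the distributional sense, with finite action $A=\int_0^1\int_M|v_t|_g^2\rho_t\,d\vol_g\,dt$.
\begin{enumerate}[label=(\alph*)]
\item Use the superposition principle (Ambrosio--Gigli--Savar\'e) to represent $\rho_t=(e_t)_\#\eta$ for a probability measure $\eta$ on absolutely continuous curves $\gamma$ solving $\dot\gamma=v_t(\gamma)$. On a compact manifold the needed integrability $\int_0^1\int|v_t|\rho_t<\infty$ follows from finite action and Cauchy--Schwarz.
\item The coupling $\pi=(e_0,e_1)_\#\eta$ lies in $\Pi(\mu_0,\mu_1)$.
\item Apply the length estimate $d_g(\gamma_0,\gamma_1)^2\le\int_0^1|\dot\gamma_t|^2dt$ and integrate in $\eta$ to get
\[
\Wb^2(\mu_0,\mu_1)\le\int d_g^2\,d\pi\le\int\!\!\int_0^1|v_t(\gamma_t)|^2\,dt\,d\eta=A .
\]
\end{enumerate}

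\emph{Upper bound via a competitor.} Since $\mu_0\ll\vol_g$, McCann's theorem gives an optimal map $T(x)=\exp_x(-\nabla\varphi(x))$ with $\varphi$ a $d_g^2/2$-concave function. Set
\[
T_t(x)=\exp_x(-t\nabla\varphi(x)),\qquad \rho_t=(T_t)_\#\mu_0 .
\]
\begin{enumerate}[label=(\alph*)]
\item Geodesics are constant speed, so $\int_0^1|\partial_tT_t(x)|^2dt=d_g(x,T(x))^2$.
\item Displacement interpolants are absolutely continuous for $t<1$, and $T_t$ is injective $\mu_0$-a.e. (Cordero-Erausquin--McCann--Schmuckenschl\"ager). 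Hence the velocity $v_t=\partial_sT_s|_{s=t}\circ T_t^{-1}$ is well defined $\rho_t$-a.e.
\item The continuity equation follows by differentiating $\int f\,d\rho_t=\int f(T_t)\,d\mu_0$ for $f\in C^\infty(M)$.
\item Change variables to compute the action:
\[
\int_0^1\!\!\int_M|v_t|^2\rho_t\,d\vol_g\,dt=\int_M\int_0^1|\partial_tT_t|^2\,dt\,d\mu_0=\Wb^2(\mu_0,\mu_1).
\]
\end{enumerate}
This shows the infimum is attained.

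\emph{Main obstacle.} The delicate step is the superposition principle in the lower bound, together with making the class of admissible $(\rho_t,v_t)$ precise: weak continuity in $t$, and $v_t$ only $L^2(\rho_t)$. If that theorem is not to be invoked, the fallback is a duality argument. For any Lipschitz subsolution $\partial_t\psi+\tfrac12|\nabla\psi|^2\le0$, test the continuity equation with $\psi$ and use Young's inequality:
\[
\int\psi_1\,d\mu_1-\int\psi_0\,d\mu_0=\int_0^1\!\!\int\bigl(\partial_t\psi+\langle\nabla\psi,v_t\rangle\bigr)\rho_t\le\tfrac12A .
\]
Then take the supremum over Hopf--Lax solutions $\psi_t=Q_t\psi_0$. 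By Kantorovich duality this supremum equals $\tfrac12\Wb^2$, which again gives $\Wb^2\le A$. This route requires a mollification in space-time to justify testing with merely Lipschitz $\psi$.
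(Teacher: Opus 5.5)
Your proof is correct. It necessarily takes a different route from the paper, because the paper gives no argument: the formula is quoted from Benamou--Brenier, a Euclidean reference, and used on $(M,g)$ as a known fact.

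\textbf{What your main argument buys.} What you supply is the standard rigorous form of the Lagrangian heuristic behind the formula, adapted to the manifold. The superposition principle replaces the flow of a smooth velocity field, so fields that are merely in $L^2(\rho_t)$ are covered. McCann's theorem, together with the non-crossing and absolute-continuity results of Cordero-Erausquin--McCann--Schmuckenschl\"ager, replaces the straight-line interpolation $x+t(T(x)-x)$. The gain is an intrinsic proof on $M$. It also shows the infimum is attained exactly at the displacement interpolation $\rho_t=(T_t)_\#\mu_0$, which is the object the paper tacitly uses later (for instance in Lemma~\ref{lem:velocity_bounds}). The price is three nontrivial theorems. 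For the first, you should say how superposition is obtained on $M$, e.g.\ through an isometric Nash embedding into $\mathbb{R}^N$ or Lisini's metric-space version.

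\textbf{Your Eulerian fallback.} This is closer in spirit to the convex reformulation in the variables $(\rho,\rho v)$. It trades superposition for Kantorovich duality, the Hopf--Lax semigroup, and a chain rule for $t\mapsto\int\psi_t\,d\rho_t$ with Lipschitz $\psi_t$. On a manifold the regularization is not a plain convolution. However, since $\rho_t\ll\vol_g$, you only need to mollify to obtain the chain rule, and an a.e.\ subsolution inequality then suffices. The fallback gives only weak duality, so it replaces the lower bound alone. You still need your competitor for the reverse inequality, unless you prove a no-gap Fenchel--Rockafellar statement.

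\textbf{The admissible class.} Your insistence on fixing it is a genuine clarification of the statement as the paper writes it: narrowly continuous $t\mapsto\rho_t$, Borel $v_t\in L^2(\rho_t)$, and endpoint conditions imposed on the continuous representative.
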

	
	\subsection{The Boltzmann entropy and its convexity}
	
	The Boltzmann entropy is defined for $\rho \in \mathcal{P}(M)$ with $\rho \ll \vol_g$ by
	\[
	H(\rho) = \int_M \rho \log \rho \, d\vol_g,
	\]
	with the convention $H(\rho)=+\infty$ if $\rho$ is not absolutely continuous.
	
	\begin{theorem}[Lott-Villani \cite{LottVillani2009}] \label{thm:lott_villani}
		Let $(M,g)$ be a Riemannian manifold. Then $\Ric_g \ge K$ if and only if for every $\Wb$-geodesic $(\rho_t)_{t\in[0,1]}$ consisting of absolutely continuous measures,
		\[
		H(\rho_t) \le (1-t)H(\rho_0) + t H(\rho_1) - \frac{K}{2} t(1-t) \Wb^2(\rho_0,\rho_1).
		\]
	\end{theorem}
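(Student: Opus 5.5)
The proof splits into the two implications, both resting on a second-order computation of the entropy along a $\Wb$-geodesic. Since $M$ is compact, McCann's theorem gives the optimal transport between absolutely continuous $\rho_0,\rho_1$ in the form $T_t(x)=\exp_x(-t\nabla\phi(x))$, where $\phi$ is $d_g^2/2$-concave. Then $(\rho_t)$ is the push-forward $(T_t)_\#\rho_0$, and by Cordero-Erausquin--McCann--Schmuckenschläger $T_t$ is differentiable $\rho_0$-a.e. in the Alexandrov sense. The change of variables formula yields
\[
H(\rho_t)=H(\rho_0)-\int_M \log \mathcal{J}_t(x)\,\rho_0(x)\,d\vol_g(x),
\]
where $\mathcal{J}_t=\det DT_t$ is the Jacobian determinant. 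Everything therefore reduces to a pointwise concavity statement for $t\mapsto \log\mathcal{J}_t(x)$.

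\textbf{Direct implication.} We assume $\Ric_g\ge K$. The differential $DT_t$ is given by Jacobi fields along the geodesic $\gamma_x(t)=T_t(x)$. Writing $\mathcal{J}_t=\det J(t)$, with $J$ the Jacobi matrix, and setting $U=J'J^{-1}$, the Riccati equation $U'+U^2+R=0$ holds. Taking traces and using $\operatorname{tr}(U^2)\ge (\operatorname{tr}U)^2/n\ge 0$ gives
\[
\frac{d^2}{dt^2}\log\mathcal{J}_t \le -\Ric(\dot\gamma_x,\dot\gamma_x)\le -K|\nabla\phi(x)|^2 .
\]
Hence the function $f(t)=-\log\mathcal{J}_t(x)$ satisfies $f''\ge K|\nabla\phi|^2$ in the distributional sense. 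Integrating the one-dimensional inequality $f(t)\le (1-t)f(0)+tf(1)-\frac{K}{2}t(1-t)|\nabla\phi|^2$ against $\rho_0$, and using $\int|\nabla\phi|^2\rho_0=\Wb^2(\rho_0,\rho_1)$, yields exactly the stated inequality.

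\textbf{Main obstacle.} The hardest step is to justify this computation when $\phi$ is only semiconcave. In that case the Alexandrov Hessian replaces the classical one, cut-locus issues appear, and $\rho_t$ must remain absolutely continuous. I would follow Cordero-Erausquin--McCann--Schmuckenschläger. First, for $t<1$ the map $T_t$ avoids the cut locus $\rho_0$-a.e. Second, the Jacobian equation holds a.e. Third, $\log\mathcal{J}_t$ is concave up to the curvature term because $\mathcal{J}_t^{1/n}$ satisfies a distorted concavity along each geodesic. At the endpoint $t=1$ I would pass to the limit by lower semicontinuity of $H$.

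\textbf{Converse.} Suppose that $\Ric_g(v,v)<K|v|^2$ for some $x_0\in M$ and some unit vector $v\in T_{x_0}M$. I would take $\rho_0$ uniform on a small ball $B_\delta(x_0)$ and transport it by $\phi(x)=\langle v,\exp_{x_0}^{-1}x\rangle\eta+\frac{\lambda}{2}d(x_0,x)^2$. Here $\lambda$ is chosen so that $\nabla^2\phi(x_0)$ is a multiple of the identity, which makes the $\operatorname{tr}(U^2)-(\operatorname{tr}U)^2/n$ term vanish to leading order; $\eta$ is small so that $\phi$ is $d^2/2$-concave. A Taylor expansion of $\log\mathcal{J}_t$ then gives $\frac{d^2}{dt^2}H(\rho_t)\approx \Ric(v,v)\eta^2+o(\eta^2)<K\eta^2\approx K\,\Wb^2$. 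For $\delta,\eta$ small this contradicts $K$-convexity.
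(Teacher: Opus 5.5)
\textbf{Comparison with the paper.} The paper does not prove this theorem. It quotes Lott--Villani and gives only a formal Eulerian sketch of $\frac{d^2}{dt^2}H(\rho_t)=\int_M(\Ric_g(v_t,v_t)+|\nabla v_t|_g^2)\rho_t\,d\vol_g$. Your Lagrangian, Jacobian-determinant route following Cordero-Erausquin--McCann--Schmuckenschl\"ager (CEMS) is the standard rigorous alternative, and your direct implication is sound in outline.

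\textbf{The endpoint step.} Suppose, as you suggest, you prove the inequality on $[0,s]$ and let $s\uparrow 1$. Then you need $\limsup_{s\uparrow1}H(\rho_s)\le H(\rho_1)$, but lower semicontinuity only gives $H(\rho_1)\le\liminf_{s\uparrow1}H(\rho_s)$, which is the wrong direction. No limit is actually needed. The same Alexandrov-Hessian argument of CEMS shows that, for $\rho_0$-a.e.\ $x$, the point $T_1(x)$ is not a cut point of $x$. Since $\rho_1$ is absolutely continuous, the Jacobian equation also holds at $t=1$ with $\mathcal{J}_1>0$ a.e. Hence $f$ is defined on all of $[0,1]$ and your one-dimensional argument applies directly.

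\textbf{The gap: the converse.} For the Boltzmann entropy,
$\frac{d^2}{dt^2}H(\rho_t)=\int_M\bigl(\operatorname{tr}(U^2)+\Ric(\dot\gamma_x,\dot\gamma_x)\bigr)\rho_0\,d\vol_g$. This involves the full $\operatorname{tr}(U^2)$, which is the $|\nabla v_t|^2$ term in the paper's formula. It does not involve the traceless part $\operatorname{tr}(U^2)-(\operatorname{tr}U)^2/n$. Making $\nabla^2\phi(x_0)$ isotropic is the device used for the R\'enyi functionals of $\CD(K,N)$, not for $H$.

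Your condition on $\lambda$ is in fact vacuous. The linear term has zero Hessian at $x_0$, so $\nabla^2\phi(x_0)=\lambda I$ for every $\lambda$. If $\lambda\neq0$ is not small compared with $\eta$, then $U\approx-\lambda(1-t\lambda)^{-1}I$. This gives $\frac{d^2}{dt^2}H\approx n\lambda^2+\Ric(v,v)\eta^2$ while $\Wb^2\approx\eta^2$, so no contradiction with $K$-convexity arises.

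\textbf{How to fix it.} Take $\lambda=0$ (or $\lambda=o(\eta)$). Also cut $\phi$ off away from $x_0$, so that it is globally defined and $C^2$-small, hence $d_g^2/2$-concave. Then:
\begin{itemize}
\item $\nabla^2\phi=O(\eta\delta)$ on $B_\delta(x_0)$, and the Riccati equation keeps $\operatorname{tr}(U^2)=O(\eta^2(\delta+\eta)^2)$;
\item with $h(t)=H(\rho_t)$, this gives $h''=\eta^2(\Ric_{x_0}(v,v)+O(\delta+\eta))$, while $\Wb^2=\eta^2(1+O(\delta^2))$;
\item write $h(t)-(1-t)h(0)-th(1)=-\int_0^1G(t,s)h''(s)\,ds$, where $G$ is the nonnegative Green kernel with total mass $t(1-t)/2$.
\end{itemize}
$K$-convexity then forces $\Ric_{x_0}(v,v)\ge K-O(\delta+\eta)$. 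This contradicts $\Ric_{x_0}(v,v)<K$ once $\delta,\eta$ are small.
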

	
	\subsection{Second variation of entropy}
	
	A key ingredient in the proof of Theorem \ref{thm:lott_villani} is the second variation formula for the entropy along geodesics.
	
	\begin{proposition}[Second variation of entropy \cite{LottVillani2009}]
		Let $(\rho_t, v_t)$ be a $\Wb$-geodesic. Then
		\[
		\frac{d^2}{dt^2} H(\rho_t) = \int_M \Ric_g(v_t,v_t) \rho_t \, d\vol_g + \int_M |\nabla v_t|_{g}^2 \rho_t \, d\vol_g.
		\]
	\end{proposition}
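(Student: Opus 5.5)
The approach is to write the geodesic in Eulerian form and differentiate the entropy twice, using only the continuity equation and the pressureless Euler equation. For a smooth $\Wb$-geodesic $(\rho_t)$, optimal transport theory gives the following structure. There is a velocity field $v_t=\nabla\phi_t$ such that
\[
\partial_t\rho_t+\divergence_g(\rho_t\nabla\phi_t)=0,\qquad \partial_t\phi_t+\tfrac12|\nabla\phi_t|^2=0 .
\]
The second relation is the Hamilton--Jacobi equation. Equivalently, the velocity satisfies $\partial_t v_t+\nabla_{v_t}v_t=0$. I will first prove the identity for smooth positive densities and smooth $\phi_t$, on a time interval avoiding the cut locus. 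The general case then follows by the approximation and displacement-interpolation arguments of \cite{LottVillani2009}. There the formula is understood in the distributional sense in $t$, or as an inequality.

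\textbf{First variation.} Differentiate $H(\rho_t)=\int\rho_t\log\rho_t$. Using $\int\partial_t\rho_t=0$ and integrating by parts on the closed manifold $M$,
\[
\frac{d}{dt}H(\rho_t)=\int(\log\rho_t)\,\partial_t\rho_t=\int\langle\nabla\log\rho_t,\nabla\phi_t\rangle\rho_t=\int\langle\nabla\rho_t,\nabla\phi_t\rangle=-\int\rho_t\,\Delta\phi_t .
\]

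\textbf{Second variation.} Differentiate once more:
\[
\frac{d^2}{dt^2}H=-\int\partial_t\rho_t\,\Delta\phi_t-\int\rho_t\,\Delta\partial_t\phi_t .
\]
For the first term, use the continuity equation and integrate by parts, which gives $-\int\rho_t\langle\nabla\phi_t,\nabla\Delta\phi_t\rangle$. For the second term, insert the Hamilton--Jacobi equation, which gives $+\tfrac12\int\rho_t\,\Delta|\nabla\phi_t|^2$. Altogether,
\[
\frac{d^2}{dt^2}H=\int\rho_t\Big(\tfrac12\Delta|\nabla\phi_t|^2-\langle\nabla\phi_t,\nabla\Delta\phi_t\rangle\Big)\,d\vol_g .
\]
The integrand is exactly Bochner's $\Gamma_2(\phi_t)$. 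Bochner's formula then yields
\[
\tfrac12\Delta|\nabla\phi|^2-\langle\nabla\phi,\nabla\Delta\phi\rangle=|\nabla^2\phi|^2+\Ric_g(\nabla\phi,\nabla\phi),
\]
and $\nabla v_t=\nabla^2\phi_t$. This gives the stated formula
\[
\frac{d^2}{dt^2}H(\rho_t)=\int_M\Ric_g(v_t,v_t)\rho_t\,d\vol_g+\int_M|\nabla v_t|^2\rho_t\,d\vol_g .
\]

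\textbf{Main obstacle.} The hard part is regularity, not the algebra. In general $\phi_t$ is only semiconvex, since it is obtained by Hopf--Lax from a $d^2/2$-concave potential. It is smooth only for $t$ in the interior of $(0,1)$ and away from the cut locus. The plan here is the one used in \cite{LottVillani2009} and McCann--Cordero-Erausquin--Schmuckenschläger. Work in Lagrangian form, with $\rho_t(T_t x)\,\det DT_t(x)=\rho_0(x)$ for $T_t(x)=\exp_x(t\nabla\phi_0(x))$. Then
\[
H(\rho_t)=H(\rho_0)-\int\log\det DT_t\,\rho_0 .
\]
Next, use Alexandrov differentiability almost everywhere. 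On the Jacobi-field matrix $J_t=DT_t$, the Riccati equation for $U=\dot J J^{-1}$ gives
\[
-\frac{d^2}{dt^2}\log\det J_t=\operatorname{tr}(U^2)+\Ric(\dot\gamma,\dot\gamma).
\]
Integrating against $\rho_0$ recovers the formula almost everywhere in the Lagrangian picture, and hence in the distributional sense in $t$. Here $\operatorname{tr}(U^2)=|\nabla v_t|^2$ because $U$ is symmetric. This is the step I expect to need the most care.
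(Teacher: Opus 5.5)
Your proposal is correct and follows the same route as the paper's sketch: the continuity equation and the geodesic (Hamilton--Jacobi) equation, one integration by parts giving $\frac{d}{dt}H(\rho_t)=\int_M\langle\nabla\log\rho_t,v_t\rangle\rho_t\,d\vol_g$, a second differentiation, and then Bochner's formula; your Jacobi-field/Riccati discussion of the nonsmooth case goes beyond what the paper provides. Note that you use the correct pressureless equation $\partial_t v_t+\nabla_{v_t}v_t=0$ (equivalently $\partial_t\phi_t+\frac12|\nabla\phi_t|^2=0$), whereas the paper's sketch writes an extra $-\nabla_g\phi_t$ on the right-hand side, which would add a spurious term $\int_M\rho_t\,\Delta\phi_t\,d\vol_g=-\frac{d}{dt}H(\rho_t)$ to the second derivative.
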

	
	\begin{proof}[Sketch]
		Using the continuity equation $\partial_t\rho_t = -\divergence_g(\rho_t v_t)$ and the geodesic equation $\partial_t v_t + \nabla_{v_t} v_t = -\nabla_{g} \phi_t$, one computes
		\[
		\frac{d}{dt} H(\rho_t) = \int_M \nabla_{g}( \log \rho_t) \cdot v_t \rho_t d\vol_g
		\]
		Differentiating again and using the Bochner formula yields the result.
	\end{proof}
	
	\subsection{The Bakry-\'Emery approach}
	
	An alternative approach to Ricci curvature bounds is through the Bakry-\'Emery calculus. For a weighted measure $d\mu = e^{-V} d\vol_g$, the Bakry-\'Emery Ricci tensor is defined as
	\[
	\Ric_{V} = \Ric_g + \Hess V.
	\]
	
	\begin{theorem}[Bakry-\'Emery \cite{OttoVillani2000}]
		A manifold satisfies $\Ric_V \ge K$ if and only if the entropy $H(\rho) = \int \rho \log \rho \, d\mu$ is $K$-convex along $\Wb$-geodesics.
	\end{theorem}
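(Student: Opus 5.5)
The plan is to reduce the statement to the unweighted Lott--Villani argument by replacing $\vol_g$ with $\mu = e^{-V}\vol_g$ throughout. Concretely, I will prove the second variation formula
\[
\frac{d^2}{dt^2}\int \rho_t\log\rho_t\,d\mu=\int_M \Ric_V(v_t,v_t)\,\rho_t\,d\mu+\int_M|\nabla v_t|^2_g\,\rho_t\,d\mu ,
\]
and then deduce both implications from it, as in Theorem \ref{thm:lott_villani}.

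\textbf{Step 1: change of densities.} Write $\rho$ for the density with respect to $\mu$. Then $\tilde\rho=\rho e^{-V}$ is the density with respect to $\vol_g$, and
\[
H_\mu(\rho)=\int\tilde\rho\log\tilde\rho\,d\vol_g+\int V\tilde\rho\,d\vol_g .
\]
So the weighted entropy is the Boltzmann entropy plus a potential energy $\mathcal V(\tilde\rho)=\int V\,d\tilde\rho$.

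\textbf{Step 2: second variation.} Along a smooth $\Wb$-geodesic $\tilde\rho_t=(T_t)_\#\tilde\rho_0$ with $T_t(x)=\exp_x(t\nabla\phi(x))$:
\begin{itemize}
\item Each particle moves along a constant-speed Riemannian geodesic $\gamma_x$. Hence $\frac{d^2}{dt^2}V(\gamma_x(t))=\Hess V(\dot\gamma,\dot\gamma)$, which gives
\[
\frac{d^2}{dt^2}\mathcal V(\tilde\rho_t)=\int\Hess V(v_t,v_t)\,\tilde\rho_t\,d\vol_g .
\]
\item The Boltzmann part is handled by the second variation proposition above, or equivalently by the Bochner formula.
\end{itemize}
Summing the two contributions gives the displayed formula. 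By $\int|\nabla v_t|^2\rho_t\ge 0$, it yields $\frac{d^2}{dt^2}H_\mu(\rho_t)\ge K\int|v_t|^2\rho_t=K\,\Wb^2(\rho_0,\rho_1)$ whenever $\Ric_V\ge K$.

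\textbf{Step 3: forward implication.} A function $f$ on $[0,1]$ with $f''\ge K\Wb^2$ satisfies $f(t)\le(1-t)f(0)+tf(1)-\frac K2t(1-t)\Wb^2$. This is $K$-convexity. To pass from smooth to general geodesics, I plan two ingredients:
\begin{itemize}
\item the displacement-interpolation representation of geodesics via McCann's theorem, which gives $c$-concave potentials, almost-everywhere second differentiability, and Jacobian determinants;
\item the fact that $\log$ of the Jacobian $\mathcal J_t$ satisfies the concavity inequality $(\log\mathcal J_t)''\le-\Ric(\dot\gamma,\dot\gamma)$, to which I add the $V$ term pointwise.
\end{itemize}
Working with Jacobians is how Lott--Villani and Cordero-Erausquin--McCann--Schmuckenschläger proceed. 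It avoids any regularity of $\phi$ beyond the almost-everywhere Hessian.

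\textbf{Step 4: converse.} Suppose $\Ric_V(\xi,\xi)<K|\xi|^2$ for some unit $\xi\in T_{x_0}M$. I will construct a geodesic that violates $K$-convexity:
\begin{itemize}
\item Choose $\phi$ with $\nabla\phi(x_0)=\xi$ and $\nabla^2\phi(x_0)=0$, scaled by a small $\delta$, so that $\nabla\phi$ has vanishing covariant derivative at $x_0$.
\item Take $\rho_0$ concentrated in a small ball around $x_0$, and let $\rho_t=(\exp(t\nabla\phi))_\#\rho_0$. This is a $\Wb$-geodesic for small $\delta$, since $\delta\phi$ is $c$-concave.
\item In the second variation formula, the $|\nabla v|^2$ term is $o(\delta^2)$ while the Ricci term is $\approx\delta^2\Ric_V(\xi,\xi)$.
\end{itemize}
Hence $\frac{d^2}{dt^2}H_\mu<K\Wb^2$ near $t=0$, contradicting $K$-convexity, as in Lott--Villani.

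The main obstacle is the nonsmooth step in Step 3: justifying the inequality along general, non-smooth geodesics. I would handle it through the pointwise Jacobian concavity rather than differentiating integrals. Degenerate regions where $\mathcal J_t=0$ are avoided by restricting to the full-measure set where McCann's map is differentiable.
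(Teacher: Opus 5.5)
The paper gives no proof of this statement. It is quoted as background and attributed to Otto--Villani; the only related material in the text is the sketched second-variation formula for the unweighted entropy in Section~\ref{s1}.

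Your proposal is correct and is the standard proof from the literature:
\begin{itemize}
\item Step~2 is that unweighted second variation plus the potential energy $\int V\,d\tilde\rho_t$. This is the formal Otto-calculus Hessian $\int(\Ric+\Hess V)(v_t,v_t)\,d\tilde\rho_t+\int|\nabla v_t|^2\,d\tilde\rho_t$.
\item Step~3 is the Cordero-Erausquin--McCann--Schmuckenschl\"ager Jacobian argument. It is applied to $t\mapsto-\log\mathcal J_t(x)+V(T_t(x))$, whose second derivative is $\ge\Ric_V(\dot\gamma_x,\dot\gamma_x)\ge K\,d(x,T_1x)^2$.
\item Step~4 is the von Renesse--Sturm/Lott--Villani localization. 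It uses the same small-ball test-measure idea the paper itself uses in Lemma~\ref{lem:entropy_expansion} and Theorem~\ref{thm:reciproque1}.
\end{itemize}
Compared with the bare citation, your route is self-contained and shows where the difficulty lies. The non-smooth analysis is needed only for the forward direction. The converse only uses smooth test geodesics, for which Step~2 is already rigorous.

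\textbf{Step~4.} Take $\rho_0$ to be a smooth bump supported in a ball of radius $r$, so that $T_t$ is a diffeomorphism. The Riccati equation along trajectories gives $\nabla v_t=O(\delta r+\delta^2)$, hence $\int|\nabla v_t|^2\rho_t\,d\mu=O(\delta^2r^2+\delta^4)$. So you need both $r\to0$ and $\delta\to0$, not just a small ball. Since all trajectories stay within distance $r+\delta$ of $x_0$, you get $h''(t)-K\Wb^2(\rho_0,\rho_1)=\delta^2\bigl(\Ric_V(\xi,\xi)-K+o(1)\bigr)$ uniformly on $[0,1]$. Hence $h''<K\Wb^2$ on all of $[0,1]$, and the chord inequality fails at $t=1/2$ directly. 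If you only have the estimate near $t=0$, apply the hypothesis to the sub-geodesic $s\mapsto\rho_{s\tau}$, using $\Wb(\rho_0,\rho_\tau)=\tau\Wb(\rho_0,\rho_1)$.

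\textbf{Step~3.} Second differentiability of the potential gives $\mathcal J_t(x)>0$ only for $t<1$. At $t=1$, positivity of $\mathcal J_1$ and the representation of $H_\mu(\rho_1)$ in terms of $\mathcal J_1$ come from the Jacobian equation, which uses absolute continuity of $\rho_1$. Differentiability alone does not give them.
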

	
	\section{The variable exponent Wasserstein distance: existence and regularity}\label{s2}
	\subsection{The variable exponent}
	
	Let $p: M \to (1,\infty)$ be a measurable function. We write $p(x)=2+\varepsilon(x)$ with $\varepsilon \in C^2(M)$.
	
	\begin{assumption}[Variable exponent] \label{ass:pexp}
		The exponent $p: M \to (1,\infty)$ satisfies:
		\begin{enumerate}
			\item $p \in C^2(M)$ and $1 < p_- \leq p(x) \leq p_+ < \infty$;
			\item $\varepsilon(x) := p(x)-2$ satisfies $\|\varepsilon\|_{C^2(M)} \leq \varepsilon_0$ with $\varepsilon_0$ sufficiently small;
			\item $p$ is log-Hölder continuous: there exists $C_{\log} > 0$ such that for all $x,y$ with $d_g(x,y) \leq 1/2$,
			\[
			|p(x) - p(y)| \leq \frac{C_{\log}}{-\log d_g(x,y)}.
			\]
		\end{enumerate}
	\end{assumption}
	\subsection{Bounds on densities}
	\begin{assumption}[Bounded densities] \label{ass:bounded}
		There exist constants $0 < a < b< \infty$ such that $\rho_0, \rho_1 \in C^\infty(M)$ satisfy $a \leq \rho_0, \rho_1 \leq b$ on $M$.
	\end{assumption}
	
	Under this assumption, the unique $\Wb$-geodesics $(\rho_t^2)_{t\in[0,1]}$ satisfies uniform bounds $a' \leq \rho_t^2 \leq b'$ for some $0 < a' < b' < \infty$ \cite{Figalli2010}.
	\subsection{Luxemburg norm formulation}
	
	For a measurable function $f: M \to \mathbb{R}$, the variable exponent Lebesgue space $L^{p(\cdot)}(M)$ consists of functions such that the modular $\int_M |f(x)|_g^{p(x)} d\vol_g(x) < \infty$. The Luxembourg norm is defined by
	\[
	\|f\|_{L^{p(\cdot)}} = \inf\left\{ \lambda > 0 : \int_M \left|\frac{f(x)}{\lambda}\right|_g^{p(x)} d\vol_g(x) \leq 1 \right\}.
	\]	
	\subsection{Functional framework for variable exponent geodesics}
	
	\begin{definition}[Admissible pairs]
		A pair $(\rho, m)$ is admissible if:
		\begin{enumerate}
			\item $\rho \in L^{p(\cdot)}([0,1]\times M; \vol_g)$ with $\rho(t,\cdot) \in \mathcal{P}_{ac}(M)$ for almost every $t$;
			\item $m \in L^{p(\cdot)}([0,1]\times M; T^*M)$;
			\item The continuity equation $\partial_t \rho + \divergence_g m = 0$ holds in the distributional sense;
			\item $\rho(0,\cdot) = \mu_0$, $\rho(1,\cdot) = \mu_1$.
		\end{enumerate}
	\end{definition}
	
	\begin{lemma}[Compactness of admissible sequences]
		Under Assumptions \ref{ass:pexp} and \ref{ass:bounded}, the set of admissible pairs $(\rho,m)$ with
		\begin{equation}
			\int_0^1 \int_M |m_t|_{g}^{p(x)} (\rho_t)^{1-p(x)} d\vol_g dt \le C
		\end{equation}
		is sequentially compact for weak convergence in $L^{p(\cdot)}$ and strong convergence in $L^1$ for $\rho$.
	\end{lemma}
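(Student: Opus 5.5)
The plan is to follow the standard Benamou--Brenier compactness argument, adapted to the variable exponent modular. Take a sequence $(\rho^k,m^k)$ of admissible pairs with $\int_0^1\int_M |m^k_t|_g^{p(x)}(\rho^k_t)^{1-p(x)}\,d\vol_g\,dt\le C$. The first step is a uniform modular bound on $m^k$ itself. Since $p_-\le p(x)\le p_+$ with $p_->1$ and $\rho^k_t$ is a probability density, I would write $|m^k|=(|m^k|(\rho^k)^{(1-p)/p})(\rho^k)^{(p-1)/p}$ and apply a pointwise Young inequality with exponents $p(x),p'(x)$. This controls $\int\!\int|m^k|^{q(x)}$ for some exponent $q$ with $1<q\le p$, and in any case gives a uniform $L^1$ bound on $m^k$. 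To obtain a uniform $L^{p(\cdot)}$ bound, as the statement demands, I would use an upper bound on $\rho^k$. A uniform $L^\infty$ bound is the natural reading of Assumption~\ref{ass:bounded} along the relevant curves, and with $\|\rho^k\|_\infty\le b'$ we get $|m^k|^{p}\le b'^{\,p_+-1}|m^k|^p\rho^{1-p}$. Boundedness of the modular then gives boundedness of the Luxemburg norm, by the standard unit-ball property. Because $1<p_-\le p_+<\infty$, the space $L^{p(\cdot)}$ is reflexive, so a subsequence satisfies $m^k\rightharpoonup m$ weakly. The same $L^\infty$ bound makes $\rho^k$ bounded in $L^{p(\cdot)}$, so also $\rho^k\rightharpoonup\rho$ weakly.

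The second step is to upgrade to strong $L^1$ convergence of $\rho$. The continuity equation gives $\partial_t\rho^k=-\divergence_g m^k$, which is bounded in $L^{1}(0,1;(W^{1,\infty})^*)$ and even in $L^{p_-}(0,1;W^{-1,p_-})$. Hence the curves $t\mapsto\rho^k_t$ are equi-Hölder in a dual norm, and each $\rho^k_t$ is a probability measure on compact $M$. An Arzelà--Ascoli argument in $C([0,1];\mathcal{P}(M))$ with the narrow topology then gives narrow convergence uniformly in $t$, and the endpoint constraints $\rho(0)=\mu_0$, $\rho(1)=\mu_1$ pass to the limit. To get strong $L^1$ in $(t,x)$ I would need spatial compactness. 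The route I would try first is an Aubin--Lions--Simon lemma, with the spatial control coming from the energy bound, which yields $\int|\nabla \rho^{1/p'}|$-type estimates only if densities are smooth. Failing that, I would use the uniform bounds $a'\le\rho^k\le b'$ together with the time-equicontinuity to obtain equi-integrability, and then Dunford--Pettis.

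The third step is closedness: showing that the limit pair is admissible and still satisfies the energy bound. The continuity equation is linear, so it passes to the limit under weak convergence. For the energy, the integrand $(\rho,m)\mapsto|m|^{p(x)}\rho^{1-p(x)}$ is jointly convex and $1$-homogeneous (perspective of $|m|^{p(x)}$), and lower semicontinuous in $(\rho,m)$. Weak lower semicontinuity then follows by writing it as a supremum of linear functionals $\int(a\rho+b\cdot m)$ over continuous $(a,b)$ satisfying $a+\frac{1}{p'(x)}|b|^{p'(x)}\le0$, up to the constant $p^{-p}$-type normalisation. Each such functional is weakly continuous, so the limit satisfies $\le C$.

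The main obstacle is the strong $L^1$ compactness of $\rho$ in space. The energy bound alone only gives time regularity, and in general only weak or narrow convergence is available. Without genuine spatial regularity, I expect this step must lean on the density bounds of Assumption~\ref{ass:bounded} propagated along the sequence, or on restricting to geodesic-type competitors where $\rho^k_t$ are smooth pushforwards.
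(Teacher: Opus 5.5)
There is a genuine gap exactly where you suspected, and it cannot be repaired: the strong $L^1$ clause of the lemma is false. Two of your steps also rely on things the hypotheses do not provide.

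First, Step~1 assumes $\|\rho^k\|_\infty\le b'$ along the whole sequence. Assumption~\ref{ass:bounded} constrains only the endpoints, and the bounds $a'\le\rho_t\le b'$ quoted in the paper concern the $\Wb$-geodesic, not arbitrary admissible pairs. Without that bound, H\"older gives only a uniform $L^1$ bound on $m^k$; you need $\rho\in L^r$ with $r>1$ to reach any $L^q$ with $q>1$. Even weak $L^{p(\cdot)}$ compactness then fails. Take $\mu_0=\mu_1$ uniform on $\mathbb{S}^1$. Move the mass by displacement interpolation onto a bump of width $1/k$ during $[0,\tfrac14]$, keep it there on $[\tfrac14,\tfrac34]$, and move it back. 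Speeds are at most $4\pi$, so the action stays below $(4\pi)^{p_+}$. Yet $\|\rho^k\|_{L^{p(\cdot)}}\to\infty$, and the narrow limit is $\delta_0$ on a time interval.

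Second, in Step~2 your fallback via equi-integrability and Dunford--Pettis gives weak, not strong, $L^1$ compactness. The action $\int_0^1\!\int_M|m|^{p(x)}\rho^{1-p(x)}$ contains no spatial derivative of $\rho$ at all. So it supplies no Aubin--Lions input, whether or not the densities are smooth.

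Two-sided density bounds do not help either. On $\mathbb{S}^1$ let $s$ be piecewise linear with $s(0)=s(1)=0$ and $s\equiv 1$ on $[\tfrac14,\tfrac34]$, and set
\[
\rho^k_t=\frac{1}{2\pi}\Bigl(1+\tfrac12 s(t)\sin(kx)\Bigr),\qquad m^k_t=\frac{s'(t)}{4\pi k}\cos(kx).
\]
Then $\partial_t\rho^k+\partial_x m^k=0$, both endpoints are the uniform density, and $\frac{1}{4\pi}\le\rho^k\le\frac{3}{4\pi}$. Since $|m^k|\le\frac{1}{\pi k}$, the action is $O(k^{-p_-})$. However $\rho^k\rightharpoonup\frac{1}{2\pi}$, while $\|\rho^k_t-\frac{1}{2\pi}\|_{L^1(M)}=\frac1\pi$ for $t\in[\tfrac14,\tfrac34]$. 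Hence no subsequence converges in $L^1$.

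The paper's own proof has precisely the hole you located. It invokes the compact embedding $W^{1,q(\cdot)}\hookrightarrow L^{q(\cdot)}$ and a variable-exponent Aubin--Lions lemma, but never produces a uniform $W^{1,q(\cdot)}$ bound on $\rho$. No such bound follows from the energy bound; only the time-regularity half of Aubin--Lions is actually available.

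What your argument does establish is the correct replacement statement:
\begin{itemize}
\item narrow compactness of $(\rho,m)$, with uniform-in-$t$ narrow convergence of $\rho_t$ preserving the endpoint constraints (the Arzel\`a--Ascoli part of Step~2);
\item stability of the continuity equation;
\item lower semicontinuity of the action via the perspective-function duality of Step~3;
\item weak $L^{p(\cdot)}$ compactness, if a uniform $L^\infty$ bound on $\rho$ is added as an explicit hypothesis.
\end{itemize}
This is all the direct method needs for Theorem~\ref{thm:existence}.
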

	
	\begin{proof}
		The log-Hölder continuity of $p$ ensures that the injection $W^{1,q(\cdot)} \hookrightarrow L^{q(\cdot)}$ is compact \cite{GaczkowskiGorkaPons2016}. The continuity equation provides a uniform bound on $\partial_t \rho$ in the dual space. The Aubin-Lions lemma for variable exponent spaces \cite[Theorem 4.1]{GaczkowskiGorkaPons2016} then yields strong convergence of $\rho$.
	\end{proof}
	
	\subsection{Tangent space and Finsler metric}
	
	\begin{definition}[Tangent vectors]
		A tangent vector $\mu \in T_\rho \mathcal{P}(M)$ is a signed measure with zero total mass that can be represented as $\mu = -\divergence_g(\rho v)$ for some vector field $v$ on $M$. The Finsler norm is defined by
		\begin{equation}
			F_{p(\cdot)}(\rho, \mu) = \inf \left\{ \lambda > 0 : \exists v \text{ with } \mu = -\divergence_g(\rho v) \text{ and } \int_M \left|\frac{v}{\lambda}\right|_g^{p(x)} \rho d\vol_g \le 1 \right\}.
		\end{equation}
	\end{definition}
	
	\begin{remark}
		This definition generalizes the Otto-Riemannian structure: when $p(x) \equiv 2$, we have $F_2^2(\rho,\mu) = \int_M |v|^2 \rho d\vol_g$ for the unique $v$ satisfying $\mu = -\divergence_g(\rho v)$ with $\int_M v \cdot \nabla \phi \rho = 0$ for all appropriate $\phi$.
	\end{remark}
	
	\subsection{Dynamic formulation}
	
	For $\mu_0,\mu_1 \in \mathcal{P}_{ac}(M)$, we consider the minimization problem
	\begin{equation}
		\Wp(\mu_0,\mu_1) = \inf_{(\rho,v)} \left\{ \lambda > 0 : \partial_t \rho + \divergence_g(\rho v)=0,\ \rho_0=\mu_0,\ \rho_1=\mu_1,\ \int_0^1\int_M \left|\frac{v_t}{\lambda}\right|_g^{p(x)} \rho_t d\vol_g dt \leq 1 \right\}.
	\end{equation}
	
	\begin{theorem}[Existence, uniqueness and structure of $\Wp$ geodesics] \label{thm:existence}
		Under Assumptions \ref{ass:pexp} and \ref{ass:bounded}, let $\lambda_\varepsilon = \Wp(\rho_0,\rho_1) > 0$. 
		
		\begin{enumerate}
			\item \textbf{Existence and uniqueness}: There exists a unique minimizing pair $(\rho_t, v_t)$ such that
			\begin{equation}
				\int_0^1 \int_M \left|\frac{v_t}{\lambda_\varepsilon}\right|_g^{p(x)} \rho_t \, d\vol_g \, dt = 1.
			\end{equation}
			Uniqueness follows from the strict convexity of the Lagrangian $L(x,v)=|v|_g^{p(x)}$ and the convexity of the constraint, as established in \cite[Theorem 3.2]{MarcosSoglo2019}.
			
			\item \textbf{Velocity structure}: There exists a function $\phi_t: M \to \mathbb{R}$ such that
			\begin{equation}
				v_t = \lambda_\varepsilon \, |\nabla \phi_t|^{q(x)-2} \nabla \phi_t,
			\end{equation}
			where $q(x) = \dfrac{p(x)}{p(x)-1}$ is the conjugate exponent.
			
			\item \textbf{PDE system}: The pair $(\rho_t, \phi_t)$ satisfies
			\begin{align}
				\partial_t \rho_t + \divergence_g\!\left( \rho_t \, \lambda_\varepsilon |\nabla \phi_t|^{q(x)-2} \nabla \phi_t \right) &= 0, \label{eq:continuity}\\
				\partial_t \phi_t + c(p(x))|\nabla \phi_t|^{q(x)} &= 0, \label{eq:HJ}
			\end{align}
			in the distributional sense.
		\end{enumerate}
	\end{theorem}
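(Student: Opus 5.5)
The plan is to treat the Luxemburg-type problem defining $\Wp$ as a family of modular minimization problems, then extract the velocity structure and the PDE system from convex duality in the Benamou--Brenier momentum variables.

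\textbf{Step 1: reformulation and existence.} Pass to the momentum variable $m=\rho v$. In these variables the modular
\[
\mathcal{A}_\lambda(\rho,m)=\int_0^1\!\!\int_M \lambda^{-p(x)}|m|_g^{p(x)}\rho^{1-p(x)}\,d\vol_g\,dt
\]
is jointly convex and lower semicontinuous in $(\rho,m)$, since $(\rho,m)\mapsto |m|^{p}\rho^{1-p}$ is the perspective of $|\cdot|^{p}$. It is also strictly decreasing and continuous in $\lambda$.

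Under Assumption \ref{ass:bounded}, the interpolating $\Wb$-geodesic together with $p_\pm$ bounds gives a finite competitor. Hence $\lambda_\varepsilon<\infty$, and $\lambda_\varepsilon>0$ because $\rho_0\neq\rho_1$.

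Take a minimizing sequence $\lambda_k\downarrow\lambda_\varepsilon$ with admissible pairs satisfying $\mathcal{A}_{\lambda_k}\le 1$. These pairs have uniformly bounded modular, so the compactness lemma gives a limit $(\rho,m)$: weak in $L^{p(\cdot)}$, and strong in $L^1$ for $\rho$. The continuity equation and the boundary data pass to the limit distributionally, and lower semicontinuity yields $\mathcal{A}_{\lambda_\varepsilon}(\rho,m)\le 1$.

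The constraint is saturated. If $\mathcal{A}_{\lambda_\varepsilon}<1$, continuity in $\lambda$ would allow a smaller $\lambda$, contradicting minimality. So the modular equals $1$.

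\textbf{Step 2: uniqueness.} Suppose two pairs both have modular $1$ at $\lambda_\varepsilon$. Their average is admissible, because the constraints are linear. Strict convexity of $|\cdot|^{p(x)}$ (recall $p>1$) forces the modular of the average to be $<1$ unless $m_1/\rho_1=m_2/\rho_2$ a.e. on $\{\rho>0\}$. A strict inequality would again allow decreasing $\lambda$, which is impossible. Hence the velocities agree, and the continuity equation with the same initial datum and the same velocity field gives $\rho_1=\rho_2$. On this point I would simply cite \cite[Theorem 3.2]{MarcosSoglo2019}.

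\textbf{Step 3: velocity structure and PDE system.} With $\lambda_\varepsilon$ fixed, $(\rho,m)$ minimizes $\mathcal{A}_{\lambda_\varepsilon}$ under the linear constraints. Introduce a Lagrange multiplier $\phi_t$ for the continuity equation and write the saddle functional
\[
\mathcal{A}_{\lambda_\varepsilon}(\rho,m)+\int_0^1\!\!\int_M(\partial_t\phi\,\rho+\nabla\phi\cdot m)\,d\vol_g\,dt-\text{boundary terms}.
\]
Fenchel--Rockafellar duality applies: the constraint is linear and, by Step 1, a strictly feasible competitor exists. It gives no duality gap and a maximizing $\phi$, which I would use with some Sobolev-type regularity as in \cite{FigalliGangboYolcu2011}. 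The optimality conditions are obtained pointwise.

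\emph{Variation in $m$.} It gives $p\lambda_\varepsilon^{-p}|v|^{p-2}v=\nabla\phi$, up to sign and a normalization of $\phi$. Inverting the map $v\mapsto|v|^{p-2}v$ uses the conjugate exponent $q$, and absorbing constants into $\phi$ yields $v_t=\lambda_\varepsilon|\nabla\phi_t|^{q-2}\nabla\phi_t$.

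\emph{Variation in $\rho$.} It gives $\partial_t\phi+(1-p)\lambda_\varepsilon^{-p}|v|^{p}=0$ on $\{\rho>0\}$. Substituting $v$ produces $\partial_t\phi+c(p(x))|\nabla\phi|^{q(x)}=0$, with $c$ collecting the constants. Because $\rho_t\ge a'>0$ for small $\varepsilon$, this holds everywhere.

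\emph{Continuity equation.} Equation \eqref{eq:continuity} is then the continuity equation with this $v$.

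\textbf{Main obstacle.} The hardest step is justifying the existence of an attained multiplier $\phi$ with enough regularity for the formulas to make sense. The $x$-dependence of $p$ prevents the homogeneity tricks used for constant $p$, and the factors $\lambda_\varepsilon^{-p(x)}$ differ pointwise, so $\phi$ must be normalized carefully.

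I would first prove the dual relation in the weak form $m=\rho\,\partial H_x(\nabla\phi)$, where $H_x$ is the convex conjugate of the modular integrand. This would rely on the positivity lower bound on $\rho_t$ for small $\|\varepsilon\|_{C^2}$, obtained by perturbing from the $\Wb$ case, and on the bounds $p_-\le p\le p_+$ for coercivity in $W^{1,q(\cdot)}$. Only afterwards would I pass to the explicit formulas.
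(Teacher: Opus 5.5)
Your Steps 1 and 2 follow the paper's route: minimize $\mathcal{A}_\lambda$ at fixed $\lambda$, show saturation at $\lambda_\varepsilon$, and get uniqueness by strict convexity, citing Marcos--Soglo. Your Step 3 is a more detailed version of the paper's one-line ``$\delta\mathcal{L}/\delta m=0$''.

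\textbf{The gap: ``absorbing constants into $\phi$''.} The problem is your sentence ``absorbing constants into $\phi$ yields $v_t=\lambda_\varepsilon|\nabla\phi_t|^{q-2}\nabla\phi_t$''. After fixing the sign of the multiplier, your first-order condition is $p(x)\lambda_\varepsilon^{-p(x)}|v_t|^{p(x)-2}v_t=\nabla\phi_t$. Its coefficients depend on $x$, and inverting it gives
\[
v_t=\lambda_\varepsilon^{q(x)}\,p(x)^{1-q(x)}\,|\nabla\phi_t|^{q(x)-2}\nabla\phi_t=\nabla_\xi H_{\lambda_\varepsilon}(x,\nabla\phi_t),\qquad H_\lambda(x,\xi)=(p-1)\,p^{-q}\lambda^{q}|\xi|^{q}.
\]
To write this as $\lambda_\varepsilon|\nabla\psi_t|^{q-2}\nabla\psi_t$ you need
\[
\nabla\psi_t=|v_t/\lambda_\varepsilon|^{p-2}\,v_t/\lambda_\varepsilon=\lambda_\varepsilon^{1-p(x)}|v_t|^{p-2}v_t=\frac{\lambda_\varepsilon}{p(x)}\nabla\phi_t .
\]
The $x$-dependent powers of $\lambda_\varepsilon$ that you flagged as the difficulty cancel exactly. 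The real obstruction is the factor $1/p(x)$ produced by differentiating $|v|^{p(x)}$ in $v$.

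The field $\frac{1}{p(x)}\nabla\phi_t$ is not a gradient in general: its exterior derivative is $d(1/p)\wedge d\phi_t$, which vanishes only where $\nabla p$ and $\nabla\phi_t$ are parallel. The Euler--Lagrange condition already forces $p\lambda_\varepsilon^{-p}|v_t|^{p-2}v_t$ to be exact. So item 2 as stated can hold only under this degenerate compatibility, and no normalization of the multiplier produces it.

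The same defect affects item 3. The Hamilton--Jacobi equation from your $\rho$-variation holds for the multiplier itself, namely
\[
\partial_t\phi_t+(p-1)p^{-q}\lambda_\varepsilon^{q}|\nabla\phi_t|^{q}=0,
\]
so the coefficient depends on $\lambda_\varepsilon$ as well as $x$. It is coupled to the velocity displayed above, not to $\lambda_\varepsilon|\nabla\phi_t|^{q-2}\nabla\phi_t$.

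\textbf{Relation to the paper and what your argument actually proves.} The paper's proof makes exactly the same leap, so this is a flaw in the statement rather than something you missed relative to the paper. The paper itself later uses the correct form $v_t=\nabla_\xi H(x,\nabla\phi_t)$, with the factor $p(x)^{1-q(x)}$, in Step 3 of the proof of Proposition~\ref{prop:hessian}. What your argument establishes is items 2--3 in the form $v_t=\nabla_\xi H_{\lambda_\varepsilon}(x,\nabla\phi_t)$ and $\partial_t\phi_t+H_{\lambda_\varepsilon}(x,\nabla\phi_t)=0$. The claimed normalization would be correct only if the modular were $\int_0^1\!\int_M\frac{1}{p(x)}|v/\lambda|^{p(x)}\rho$.

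\textbf{Two smaller points.}
\begin{enumerate}
\item The lower bound $\rho_t\ge a'>0$ along the $\Wp$-geodesic does not follow from $\Wb$-closeness to the quadratic geodesic, since Theorem~\ref{thm:stability} gives no pointwise control.
\item In Step 2, equal velocities give $\rho_1=\rho_2$ only if the continuity equation with that field has unique solutions. That needs regularity of $v$ you have not established. Because the perspective integrand is not strictly convex jointly in $(\rho,m)$, this is exactly where uniqueness has to be proved.
\end{enumerate}
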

	
	\begin{proof}[Proof]
		The proof follows the same strategy as in \cite[Theorem 3.2]{MarcosSoglo2019}, 
		and relies on the compactness, convexity, and reflexivity properties of variable exponent 
		Lebesgue spaces established in \cite{GaczkowskiGorkaPons2016}.
		
		\textbf{Step 1: Functional framework for fixed $\lambda$.} 
		For fixed $\lambda > 0$, define the action functional
		\begin{equation}
			\mathcal{A}_\lambda(\rho, m) = \int_0^1 \int_M \left|\frac{m_t}{\lambda \rho_t}\right|_g^{p(x)} \rho_t \, d\vol_g \, dt,
		\end{equation}
		with $(\rho, m)$ satisfying the continuity equation. Minimization of $\mathcal{A}_\lambda$ yields a unique minimizer $(\rho^\lambda, m^\lambda)$ for each $\lambda$, by strict convexity of the Lagrangian.
		
		\textbf{Step 2: Optimal $\lambda$.}
		The Wasserstein distance $\Wp(\mu_0,\mu_1)$ is the unique $\lambda$ for which this minimum equals $1$. By homogeneity, there exists a unique $\lambda_\varepsilon$ such that $\min \mathcal{A}_{\lambda_\varepsilon} = 1$.
		
		\textbf{Step 3: Optimality conditions.}
		The condition $\delta \mathcal{L}/\delta m = 0$ gives the relation between $v_t$ and $\nabla \phi_t$, leading to $v_t = \lambda_\varepsilon |\nabla \phi_t|^{q(x)-2} \nabla \phi_t$.
	\end{proof}
	
	\subsection{Well-posedness of the variable exponent Hamilton-Jacobi equation}
	
	The Hamilton-Jacobi equation \eqref{eq:HJ} plays a central role in characterizing $\Wp$-geodesics. When the exponent $p$ depends on the spatial variable $x$, the Lagrangian $L(x,v)=|v|_g^{p(x)}$ is no longer homogeneous and its dependence on $x$ requires a fine analysis. Figalli, Gangbo and Yolcu \cite{FigalliGangboYolcu2011} developed a variational method to study a class of parabolic PDEs of which the Hamilton-Jacobi equation is an integral part. Their approach, based on De Giorgi's interpolation method, applies directly to our framework.
	
	\begin{proposition}[Hamilton--Jacobi equation with space-dependent growth]
		\label{prop:HJ_variable_p}
		
		Let $(M,g)$ be a smooth complete Riemannian manifold, with associated geodesic distance $d_g$. 
		Let $p \in W^{1,\infty}(M)$ such that
		\[
		1 < p_- \le p(x) \le p_+ < \infty.
		\]
		
		Define the Lagrangian
		\[
		L(x,v) := |v|_g^{p(x)}, \quad (x,v) \in TM.
		\]
		
		Let $\phi_0 \in W^{1,\infty}(M)$ and define the value function
		\begin{equation}
			\phi(t,x)
			:=
			\inf_{\substack{\gamma \in AC([0,t];M) \\ \gamma(t)=x}}
			\left\{
			\phi_0(\gamma(0)) + \int_0^t |\dot{\gamma}(s)|_g^{p(\gamma(s))} \, ds
			\right\}.
			\label{eq:value_function}
		\end{equation}
		
		Then $\phi \in C([0,T];W^{1,\infty}(M))$ and $\phi$ is the unique viscosity solution of
		\begin{equation}
			\partial_t \phi(t,x) + H(x,\nabla \phi(t,x)) = 0,
			\quad \phi(0,x)=\phi_0(x),
			\label{eq:HJ}
		\end{equation}
		where the Hamiltonian $H$ is given by the Legendre transform of $L$:
		\[
		H(x,\xi)
		=
		\sup_{v \in T_xM}
		\left\{
		\langle \xi, v \rangle - |v|_g^{p(x)}
		\right\}.
		\]
		
		Moreover, $H$ admits the explicit expression
		\[
		H(x,\xi)
		=
		(p(x)-1)\, p(x)^{-\frac{p(x)}{p(x)-1}} \, |\xi|_g^{\frac{p(x)}{p(x)-1}}.
		\]
		
	\end{proposition}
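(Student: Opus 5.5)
The plan is to follow the classical Hopf--Lax / optimal control route, adapted to a Lagrangian that is not homogeneous because the exponent depends on $x$. The order is: first the Legendre transform, then the properties of $L$ and $H$ that the viscosity theory needs, then regularity of $\phi$, and finally the viscosity property and uniqueness.

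\textbf{Step 1: explicit Hamiltonian.} Fix $x$ and write $p=p(x)$. The supremum in $H(x,\xi)=\sup_v\{\langle\xi,v\rangle-|v|_g^{p}\}$ is attained at $v$ parallel to $\xi$. Setting $v=s\,\xi/|\xi|$ with $s\ge0$, we maximize $s|\xi|-s^{p}$. The critical point is $s^{p-1}=|\xi|/p$, and substituting back gives
\[
H=s\bigl(|\xi|-s^{p-1}\bigr)=(p-1)\,s^{p}=(p-1)\,p^{-p/(p-1)}\,|\xi|^{p/(p-1)}.
\]
This is the stated formula. It also shows $H$ is $C^1$ in $\xi$ and locally Lipschitz in $x$, because $p\in W^{1,\infty}$ and $p_->1$. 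It has superlinear growth $|\xi|^{q_+}\gtrsim H\gtrsim |\xi|^{q_-}-C$, where $q_\pm$ are the conjugates of $p_\mp$. Finally, for fixed $R$, the Lipschitz constant of $H$ in $x$ is bounded uniformly on $\{|\xi|\le R\}$; the logarithmic factor $\log|\xi|$ coming from $\partial_x p$ is harmless on compact sets of $\xi$.

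\textbf{Step 2: properties of the value function.}
\begin{itemize}
\item Existence of minimizers in \eqref{eq:value_function} follows from Tonelli's theorem: $L$ is convex and superlinear in $v$, and completeness of $M$ gives coercivity.
\item The dynamic programming principle $\phi(t,x)=\inf_\gamma\{\phi(s,\gamma(s))+\int_s^t L\}$ follows directly from concatenating curves.
\item For the spatial Lipschitz bound, compare a minimizer ending at $x$ with the same curve modified on a short final segment to end at $y$. Alternatively, use the constant curve $\gamma\equiv x$ for an upper bound and $\phi_0$ Lipschitz for a lower bound. Together with the superlinear growth, this gives a bound on $\|\dot\gamma\|_\infty$ for minimizers, hence $\|\nabla\phi(t)\|_\infty\le C(\|\nabla\phi_0\|_\infty,p_\pm)$.
\item The time regularity follows from the dynamic programming principle and this velocity bound.
\end{itemize}
Together these give $\phi\in C([0,T];W^{1,\infty})$. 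I would follow the variational structure of \cite{FigalliGangboYolcu2011} here.

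\textbf{Step 3: viscosity property.} Let $\psi$ be a smooth test function touching $\phi$ from above at $(t_0,x_0)$.
\begin{itemize}
\item For the subsolution inequality, use the curve $\gamma(s)=\exp_{x_0}((s-t_0)v)$ in the dynamic programming principle, divide by $h$, let $h\to0$, and take the supremum over $v$. This gives $\partial_t\psi+H(x_0,\nabla\psi)\le0$.
\item For the supersolution inequality, use an optimal curve. Its velocity is bounded by Step 2, so the averaged velocities $\frac1h\int\dot\gamma$ have a limit point $v$. Continuity of $L$ then gives $\partial_t\psi\ge\langle\nabla\psi,v\rangle-L(x_0,v)$ up to sign conventions, i.e.\ $\partial_t\psi+H\ge0$.
\end{itemize}

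\textbf{Step 4: uniqueness (main obstacle).} Uniqueness comes from the comparison principle for Lipschitz viscosity solutions, and this is the hardest step. The standard doubling-of-variables estimate needs $|H(x,\xi)-H(y,\xi)|\le\omega(d(x,y)(1+|\xi|))$, and here the $x$-dependence enters through the exponent, producing a factor $|\xi|^{q(x)}\log|\xi|$. I would first restrict to Lipschitz solutions, which is legitimate since Step 2 provides them and the comparison is stated in that class. On that class $|\xi|\le R$, so the modification of $H$ outside the ball $\{|\xi|\le R\}$ can be made globally Lipschitz, and the classical Crandall--Lions comparison applies. On a noncompact $M$ one needs an additional localization by a penalization $\delta\, d_g(x,x_*)^2$; the finite speed of propagation coming from the velocity bound handles this.
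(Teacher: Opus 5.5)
Your route is the paper's: Legendre transform, dynamic programming, test functions, then comparison. In two places you are more careful than the paper.
\begin{itemize}
\item You derive the subsolution inequality backward in time, using $\phi(t_0-h,\cdot)\le\psi(t_0-h,\cdot)$. The paper's forward-in-time bound on $\phi(t+h,x)$ does not by itself control $\partial_t\psi$.
\item You note that $\partial_x H\sim|\xi|^{q}\log|\xi|\,\nabla p$ violates the usual structure condition, so comparison must be run among Lipschitz solutions after truncating $H$. The paper invokes comparison from continuity, convexity and superlinearity alone.
\end{itemize}

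\textbf{Gap: the velocity bound in Step 2.} The bound on $\|\dot\gamma\|_\infty$ for minimizers does not follow from what you list. The constant curve and the Lipschitz bound on $\phi_0$ give $\phi_0-Ct\le\phi(t,\cdot)\le\phi_0$, hence $\int_0^t L(\gamma,\dot\gamma)\,ds\le Ct$. This controls only the average speed and does not exclude velocity spikes. Since your uniqueness argument works only among Lipschitz solutions, this bound is load-bearing. There are two clean fixes.
\begin{itemize}
\item[(a)] Use that $L$ is autonomous. Time reparametrizations give the Erdmann/Du Bois-Reymond identity $(p(\gamma)-1)|\dot\gamma|^{p(\gamma)}=\text{const}$ a.e.\ along minimizers, with no smoothness of $L$ in $x$ needed, because $L(x,\lambda v)=\lambda^{p(x)}L(x,v)$. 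Combined with the average bound, this gives $\|\dot\gamma\|_\infty\le C(\|\nabla\phi_0\|_\infty,p_\pm)$.
\item[(b)] Avoid velocities altogether. The Lax--Oleinik semigroup is monotone and commutes with constants, so $\|\phi(t+h)-\phi(t)\|_\infty\le\|\phi(h)-\phi_0\|_\infty\le Ch$, with $C=\sup_x H(x,\xi)$ over $|\xi|_g=\|\nabla\phi_0\|_\infty$. Then for $d=d_g(x,y)\le t$, a unit-speed minimizing geodesic has action $d$ on $[t-d,t]$. Hence $\phi(t,y)\le\phi(t-d,x)+d\le\phi(t,x)+(C+1)d$. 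The case $d>t$ follows from $|\phi(t)-\phi_0|\le Ct$.
\end{itemize}
Your supersolution step also needs convexity rather than mere continuity of $L$: limit points of averaged velocities cannot be passed inside $L$. The pointwise Fenchel--Young inequality $\langle\nabla\psi,\dot\gamma\rangle\le L(\gamma,\dot\gamma)+H(\gamma,\nabla\psi)$, integrated along the optimal curve on $[t_0-h,t_0]$, gives $\partial_t\psi+H(x_0,\nabla\psi)\ge0$ with no velocity bound at all.

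\textbf{The regularity conclusion is false as stated.} The claim $\phi\in C([0,T];W^{1,\infty}(M))$ fails in the norm topology, so no argument (yours or the paper's) can establish it. Take $M=\mathbb{S}^1$, $p\equiv2$ and $\phi_0(\theta)=|\theta|$ on $[-\pi,\pi]$. The value function is $\phi(t,\theta)=\theta^2/t$ for $|\theta|\le t/2$, so $\|\partial_\theta\phi(t)-\partial_\theta\phi_0\|_{L^\infty}\ge1$ for all small $t>0$. Moving gradient jumps break norm continuity at positive times as well. What the dynamic programming principle and the Lipschitz bounds actually give is that $\phi$ is bounded and Lipschitz on $[0,T]\times M$. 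It is continuous in time with values in $C(M)$ and uniformly Lipschitz in $x$, hence continuous into $W^{1,\infty}$ only for the weak-$*$ topology. The regularity claim should be read or corrected that way.
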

	
	\begin{proof}
		
		\textbf{Step 1: Well-posedness of the value function.}
		
		Since $p(x) \ge p_- > 1$, the Lagrangian $L(x,v)=|v|_g^{p(x)}$ is continuous in $(x,v)$, strictly convex in $v$, and coercive:
		\[
		L(x,v) \to +\infty \quad \text{as } |v|_g \to \infty.
		\]
		Therefore, the functional in \eqref{eq:value_function} is well-defined and admits minimizing sequences. Standard arguments in the calculus of variations imply that $\phi$ is finite and Lipschitz continuous in $x$, uniformly in $t$.
		
		\medskip
		
		\textbf{Step 2: Dynamic Programming Principle (DPP).}
		
		For every $h>0$ small, the value function satisfies
		\begin{equation}
			\phi(t+h,x)
			=
			\inf_{y \in M}
			\left\{
			\phi(t,y)
			+
			\inf_{\substack{\gamma \in AC([t,t+h]) \\ \gamma(t)=y,\ \gamma(t+h)=x}}
			\int_t^{t+h} |\dot{\gamma}(s)|_g^{p(\gamma(s))} ds
			\right\}.
			\label{eq:DPP}
		\end{equation}
		
		For small $h$, minimizing curves are close to geodesics, and one can approximate
		\[
		\int_t^{t+h} |\dot{\gamma}(s)|_g^{p(\gamma(s))} ds
		\approx
		h \, L\!\left(y, \frac{\exp_y^{-1}(x)}{h}\right).
		\]
		
		Thus,
		\begin{equation}
			\phi(t+h,x)
			=
			\inf_{y \in M}
			\left\{
			\phi(t,y)
			+
			h\, L\!\left(y, \frac{\exp_y^{-1}(x)}{h}\right)
			\right\}
			+ o(h).
			\label{eq:DPP2}
		\end{equation}
		
		\medskip
		
		\textbf{Step 3: Formal expansion.}
		
		Let $\psi$ be a smooth test function such that $\phi - \psi$ has a local maximum at $(t,x)$. For $y$ close to $x$, write in normal coordinates:
		\[
		y = \exp_x(-h v).
		\]
		
		Then,
		\[
		\psi(t,y)
		=
		\psi(t,x)
		-
		h \langle \nabla \psi(t,x), v \rangle
		+ o(h).
		\]
		
		Substituting into \eqref{eq:DPP2}, we obtain
		\[
		\phi(t+h,x)
		\le
		\psi(t,x)
		+
		h \inf_{v \in T_xM}
		\left\{
		L(x,v) - \langle \nabla \psi(t,x), v \rangle
		\right\}
		+ o(h).
		\]
		
		\medskip
		
		\textbf{Step 4: Legendre transform.}
		
		By definition of the Hamiltonian,
		\[
		\inf_{v}
		\left\{
		L(x,v) - \langle \xi, v \rangle
		\right\}
		=
		- H(x,\xi).
		\]
		
		Thus,
		\[
		\phi(t+h,x)
		\le
		\psi(t,x)
		-
		h H(x,\nabla \psi(t,x))
		+ o(h).
		\]
		
		Dividing by $h$ and letting $h \to 0$, we obtain the viscosity subsolution inequality:
		\[
		\partial_t \psi(t,x) + H(x,\nabla \psi(t,x)) \le 0.
		\]
		
		A symmetric argument yields the supersolution inequality.
		
		\medskip
		
		\textbf{Step 5: Computation of the Hamiltonian.}
		
		Fix $x \in M$ and set $p = p(x)$. For $\xi \in T_x^*M$, the supremum
		\[
		H(x,\xi)
		=
		\sup_{v \in T_xM}
		\left\{
		\langle \xi, v \rangle - |v|_g^{p}
		\right\}
		\]
		is achieved for $v$ colinear to $\xi^\sharp$. Writing $v=\lambda \frac{\xi^\sharp}{|\xi|_g}$, one reduces to
		\[
		\sup_{\lambda \ge 0}
		\left\{
		\lambda |\xi|_g - \lambda^p
		\right\}.
		\]
		
		Optimizing yields
		\[
		\lambda_* = \left(\frac{|\xi|_g}{p}\right)^{\frac{1}{p-1}},
		\]
		and therefore
		\[
		H(x,\xi)
		=
		(p-1)\, p^{-\frac{p}{p-1}} |\xi|_g^{\frac{p}{p-1}}.
		\]
		
		\medskip
		
		\textbf{Step 6: Uniqueness.}
		
		Since $H(x,\xi)$ is continuous in $x$, convex in $\xi$, and has superlinear growth, the comparison principle for viscosity solutions applies. Hence the solution is unique.
		
	\end{proof}
	\section{Rigorous perturbative analysis}\label{s3}
	
	\subsection{Control of velocity bounds}
	
	\begin{lemma}[Uniform bounds on velocity for $\Wb$ geodesics] \label{lem:velocity_bounds}
		Let $(\rho_t^2, \phi_t^2)$ be the $\Wb$ geodesic connecting $\rho_0, \rho_1$ satisfying Assumption \ref{ass:bounded}. Then there exist constants $0 < c < C < \infty$ such that for all $t \in [0,1]$ and all $x \in M$,
		\[
		c \leq |\nabla \phi_t^2(x)| \leq C,
		\]
		and moreover, $\nabla \phi_t^2$ is $C^{1,\alpha}$-regular.
	\end{lemma}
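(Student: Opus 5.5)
The plan is to read the $\Wb$-geodesic through the Brenier--McCann representation. By McCann's theorem on compact manifolds, the optimal map from $\rho_0$ to $\rho_1$ is $T(x)=\exp_x(\nabla\phi_0(x))$ with $\phi_0$ a $d_g^2/2$-concave potential. The geodesic is $\rho_t^2=(T_t)_\#\rho_0$, where $T_t(x)=\exp_x(t\nabla\phi_0(x))$. The time-dependent potential $\phi_t^2$ is obtained from $\phi_0$ by the Hopf--Lax formula, which is the case $p\equiv 2$ of Proposition \ref{prop:HJ_variable_p}. Its gradient satisfies $\nabla\phi_t^2(T_t(x))=\frac{d}{ds}\big|_{s=t}T_s(x)$, so every statement about $\nabla\phi_t^2$ reduces to statements about $\nabla\phi_0$ and the map $T_t$.

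\textbf{Upper bound.} This step is elementary. Since $T_t(x)$ runs along a minimizing geodesic from $x$ to $T(x)$ at constant speed, we get
\[
|\nabla\phi_t^2(T_t(x))|_g = d_g(x,T(x))\le \operatorname{diam}(M).
\]
So one can take $C=\operatorname{diam}(M)$, uniformly in $t$. No regularity is needed for this.

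\textbf{$C^{1,\alpha}$ regularity.} Here I would invoke the regularity theory of optimal transport on manifolds. Under Assumption \ref{ass:bounded}, $\phi_0$ solves a Monge--Amp\`ere-type equation with smooth right-hand side bounded above and below. Smoothness of $\phi_0$ then follows from the Ma--Trudinger--Wang theory (Loeper; Figalli--Rifford--Villani), but only under the MTW condition on $(M,g)$, for instance on flat tori or round spheres. Loeper's counterexamples show that some such hypothesis is unavoidable. So I would add the MTW condition, or at least its conclusion, as a standing assumption.

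Given smooth $\phi_0$, the maps $T_t$ are diffeomorphisms for $t\in[0,1]$. This is because $\nabla\phi_0(x)$ stays inside the injectivity domain, by strict $c$-convexity together with the density bounds. Consequently $\nabla\phi_t^2=\dot T_t\circ T_t^{-1}$ is $C^{1,\alpha}$ uniformly in $t$. I expect this step to be the main technical obstacle: the paper's cited source \cite{Figalli2010} only gives density bounds, not smoothness.

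\textbf{Lower bound: the statement as written fails.} The lower bound $|\nabla\phi_t^2|\ge c>0$ cannot hold. Since $\phi_t^2$ is a $C^1$ function on a compact manifold without boundary, it attains a maximum, and $\nabla\phi_t^2$ vanishes there. The case $\rho_0=\rho_1$, where $\phi_t^2\equiv\text{const}$, is a blunt counterexample. I would therefore propose to amend the lemma in one of two ways:
\begin{enumerate}[label=(\alph*)]
\item keep only the upper bound and the $C^{1,\alpha}$ bound, and handle the set $\{|\nabla\phi_t^2|\text{ small}\}$ separately wherever the lower bound is later used. A natural use is in $\ln|v_0|$ terms, where $s^2\ln s\to 0$ makes these contributions harmless;
\item or state the lower bound only on a subset $\{x:|\nabla\phi_t^2(x)|\ge c\}$, and control the measure of its complement.
\end{enumerate}
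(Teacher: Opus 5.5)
Your analysis is correct, and it is more accurate than the paper's own argument. The lower bound is false as stated, for the two reasons you give. First, the lemma's own conclusion that $\nabla\phi_t^2$ is $C^{1,\alpha}$ makes $\phi_t^2$ a $C^1$ function on the closed manifold $M$, so its gradient vanishes at a maximum point. Second, $\rho_0=\rho_1$ satisfies Assumption~\ref{ass:bounded} and gives $\nabla\phi_t^2\equiv 0$. The paper's own example in Section~\ref{s9} shows the same thing with $\rho_0\neq\rho_1$: there $T_1(\theta)-\theta\approx 2\alpha\sin\theta$, so the $\Wb$-velocity vanishes at $\theta=0$ and $\theta=\pi$.

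The paper's proof takes a different route, and that route does not work. Its chain of reasoning is as follows:
\begin{itemize}
\item the Monge--Amp\`ere equation gives $a/b\le\det DT_1\le b/a$;
\item hence the eigenvalues of $DT_1$ are bounded above and below;
\item via $DT_t=(1-t)I+tDT_1$, the same holds for $DT_t$;
\item hence $|\nabla\phi_t^2|=|\dot T_t\circ T_t^{-1}|$ is bounded above and below;
\item the $C^{1,\alpha}$ regularity is attributed to standard theory.
\end{itemize}
Every link fails:
\begin{itemize}
\item A bound on the determinant does not control the individual eigenvalues.
\item $DT_t=(1-t)I+tDT_1$ is a Euclidean identity. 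On $M$ the differential of $x\mapsto\exp_x(t\nabla\phi_0(x))$ involves Jacobi fields.
\item Decisively, bounds on the \emph{derivative} $DT_t$ say nothing about the size of the \emph{displacement} $\dot T_t$. The identity map has $DT_t=I$ and zero velocity.
\end{itemize}
Your observation $|\dot T_t(x)|=d_g(x,T(x))\le\operatorname{diam}(M)$ is the correct source of the upper bound, and it needs no regularity. One small normalization point: with $L=|v|^2$ as in Proposition~\ref{prop:HJ_variable_p} at $p\equiv 2$, the Hopf--Lax gradient is $2\dot T_t$ rather than $\dot T_t$, which only doubles $C$.

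You are also right that the $C^{1,\alpha}$ claim is not available on a general compact manifold. By Loeper's results, negative sectional curvature somewhere produces smooth positive densities with discontinuous optimal maps. So an MTW-type hypothesis, or its conclusion, must be assumed. Once $\phi_0\in C^2$ is granted, it is the $C^2$ regularity of the $c$-concave potential that keeps $T(x)$ off the cut locus of $x$, not the density bounds.

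Your amendment (a) is the right repair for the downstream use in Proposition~\ref{prop:expansion}. In its normalization $\int_M|v_0|^2\rho\,d\vol_g=1$, two facts suffice:
\begin{itemize}
\item $v\mapsto|v|^2\ln|v|$ is bounded and Lipschitz on bounded sets, since it is $C^1$ up to $v=0$;
\item the Taylor remainder of $s^{2+\varepsilon}=s^2e^{\varepsilon\ln s}$ is bounded for $s\le 1$ by $\tfrac12\|\varepsilon\|_\infty^2\,s^{2-\|\varepsilon\|_\infty}(\ln s)^2$, which stays bounded as $s\to0$.
\end{itemize}
Neither fact needs a lower bound on $|v_0|$. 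Lemma~\ref{lem:perturbed_velocity_bounds} asserts the analogous lower bound $c/2$ for $v_t^\varepsilon$. It rests on the same false bound and must be amended in the same way.
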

	
	\begin{proof}
		The $\Wb$ geodesic corresponds to the optimal transport map $T_t(x) = \exp_x(t \nabla \phi_0(x))$. Since $\rho_0$ and $\rho_1$ are bounded away from zero and infinity, the Monge-Ampère equation
		\[
		\rho_0(x) = \rho_1(T_1(x)) \det(DT_1(x))
		\]
		implies that $\det(DT_1(x))$ is between $\frac{m}{M}$ and $\frac{M}{m}$. Consequently, the eigenvalues of $DT_1(x)$ are bounded away from $0$ and $\infty$. By the relation $DT_t(x) = (1-t)I + t DT_1(x)$ (in normal geodesic coordinates), the same holds for $DT_t(x)$. Since $v_t^2 = \nabla \phi_t^2 = \dot{T}_t \circ T_t^{-1}$, we obtain uniform bounds on $|\nabla \phi_t^2|$. The $C^{1,\alpha}$ regularity follows from standard regularity theory for optimal transport \cite{Figalli2010}.
	\end{proof}
	
	\begin{lemma}[Uniform velocity bounds for perturbed geodesics] \label{lem:perturbed_velocity_bounds}
		Under the same assumptions, for $\|\varepsilon\|_\infty$ sufficiently small, the optimal velocity $v_t^\varepsilon$ satisfies
		\[
		\frac{c}{2} \leq |v_t^\varepsilon(x)| \leq 2C \quad \text{a.e. on } \supp(\rho_t^\varepsilon),
		\]
		where $c, C$ are the bounds from Lemma \ref{lem:velocity_bounds}.
	\end{lemma}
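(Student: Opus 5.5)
The plan is to view the $\Wp$-geodesic as a small perturbation of the $\Wb$-geodesic and to transfer the pointwise bounds of Lemma \ref{lem:velocity_bounds} through the velocity structure of Theorem \ref{thm:existence}. That theorem gives $v_t^\varepsilon = \lambda_\varepsilon |\nabla\phi_t^\varepsilon|^{q(x)-2}\nabla\phi_t^\varepsilon$, so
\[
|v_t^\varepsilon| = \lambda_\varepsilon |\nabla\phi_t^\varepsilon|^{q(x)-1}, \qquad q(x)-1 = \frac{1}{1+\varepsilon(x)} .
\]
It is therefore enough to prove two things: $\lambda_\varepsilon$ is close to its quadratic counterpart, and $\nabla\phi_t^\varepsilon$ is uniformly close to $\nabla\phi_t^2$ (suitably normalized) in $L^\infty$.

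\textbf{Step 1 (the distance).} First I would compare $\lambda_\varepsilon=\Wp(\rho_0,\rho_1)$ with $\Wb(\rho_0,\rho_1)$. Use the $\Wb$-geodesic as a competitor in the definition of $\Wp$. Since $c\le|\nabla\phi_t^2|\le C$, the elementary bound
\[
\bigl| |v/\lambda|^{2+\varepsilon(x)} - |v/\lambda|^2 \bigr| \le C'\|\varepsilon\|_\infty |v/\lambda|^2\bigl(1+|\log|v/\lambda||\bigr)
\]
gives $\Wp\le \Wb(1+O(\|\varepsilon\|_\infty))$. For the reverse inequality, I would apply the same bound to the $\Wp$-minimizer. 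This needs only an a priori $L^{p(\cdot)}$ control of $v^\varepsilon$, which the constraint $\int\!\!\int |v/\lambda_\varepsilon|^{p(x)}\rho=1$ supplies, together with Hölder/Young in variable exponent spaces. Also $\lambda_\varepsilon>0$ is bounded above and below uniformly in small $\varepsilon$.

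\textbf{Step 2 (closeness of potentials).} Next I would show $\sup_t\|\nabla\phi_t^\varepsilon-\nabla\phi_t^2\|_{L^\infty}\to0$ as $\|\varepsilon\|_\infty\to0$. The potential $\phi^\varepsilon$ solves the Hamilton--Jacobi equation \eqref{eq:HJ}, whose Hamiltonian is given explicitly in Proposition \ref{prop:HJ_variable_p} and depends continuously on $p$ in $C^2$. From Step 1 and the energy identity, the actions converge, so strict convexity upgrades weak convergence of $(\rho^\varepsilon,m^\varepsilon)$, obtained from the Compactness Lemma, to strong convergence. Uniqueness of the $\Wb$-geodesic then identifies the limit. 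To pass from $L^1$/$L^2$ closeness to $L^\infty$, I would use interior regularity: $\phi^\varepsilon$ is semiconcave with constants uniform in $\varepsilon$, because $\|\varepsilon\|_{C^2}\le\varepsilon_0$. I would combine this with the $C^{1,\alpha}$ regularity of $\nabla\phi^2_t$ from Lemma \ref{lem:velocity_bounds}, and then apply an interpolation inequality $\|f\|_\infty\lesssim \|f\|_{L^2}^{\theta}\|f\|_{C^{0,\alpha}}^{1-\theta}$ to $f=\nabla\phi^\varepsilon_t-\nabla\phi^2_t$. This requires uniform $C^{1,\alpha}$ bounds on $\phi^\varepsilon$.

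\textbf{Step 3 (conclusion).} Once $|\nabla\phi^\varepsilon_t|\in[c-\delta,C+\delta]$ and $\lambda_\varepsilon$ is within $\delta$ of its limit, the exponent $q-1=1+O(\varepsilon)$ changes $|\nabla\phi|$ only by a factor $\max(C,c^{-1})^{O(\|\varepsilon\|_\infty)}=1+O(\|\varepsilon\|_\infty)$. Choosing $\|\varepsilon\|_\infty$ small then yields $c/2\le|v_t^\varepsilon|\le 2C$ on $\supp\rho_t^\varepsilon$.

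\textbf{Main obstacle.} The hardest part is the uniform-in-$\varepsilon$ $C^{1,\alpha}$ (or at least Lipschitz-gradient) regularity of $\phi^\varepsilon$ in Step 2. Caffarelli-type regularity for the non-quadratic, space-dependent cost $|v|^{p(x)}$ is not covered by \cite{Figalli2010}. My first attempt would be a perturbative argument: the cost is a $C^2$-small perturbation of $d_g^2/2$, so the Ma--Trudinger--Wang condition and the twist condition persist in a neighbourhood of the diagonal by continuity of the relevant fourth derivatives. The regularity theory would then hold with constants uniform in $\varepsilon$. Failing that, I would settle for a.e. bounds obtained via semiconcavity and a maximum-principle comparison on the Hamilton--Jacobi equation.
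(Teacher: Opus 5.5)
Your route differs from the paper's. The paper simply invokes the $L^2$ estimate of Theorem~\ref{thm:stability} and concludes that $v_t^\varepsilon\in[c/2,2C]$ ``on most of the support''. You are right that this cannot give an a.e.\ bound, since an $L^2$ estimate only controls $v_t^\varepsilon$ off a set of small measure. It is also circular: Theorem~\ref{thm:stability} rests on Lemma~\ref{lem:energy_estimate}, which uses Proposition~\ref{prop:expansion}, whose proof uses the present lemma.

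Your argument, however, has a gap that no amount of regularity can close: the lower bound is false. On a closed manifold a $C^1$ function attains its maximum, where its gradient vanishes. So Lemma~\ref{lem:velocity_bounds}, which asserts both $|\nabla\phi_t^2|\ge c>0$ everywhere and $C^{1,\alpha}$ regularity of $\nabla\phi_t^2$, is self-contradictory. In the paper's own $\mathbb{S}^1$ example the displacement $\approx 2\alpha\sin\theta$ vanishes at $\theta=0,\pi$. Since $\varepsilon\equiv0$ is allowed, the claimed bound $|v_t^2|\ge c/2$ already fails on a neighbourhood of a critical point of $\phi_t^2$, and that neighbourhood has positive mass because $\rho_t^2\ge a'>0$. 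The same happens for $\varepsilon\neq0$: if your Step~2 succeeds in making $\phi_t^\varepsilon$ of class $C^1$, then $v_t^\varepsilon=\lambda_\varepsilon|\nabla\phi_t^\varepsilon|^{q-2}\nabla\phi_t^\varepsilon$ vanishes at a critical point of $\phi_t^\varepsilon$. So the lower half of your Step~3 is deduced from a false input, and only $|v_t^\varepsilon|\le 2C$ is a meaningful target.

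Even for the upper bound, Step~2 is not established, and the perturbative MTW argument you propose breaks down for a concrete reason. For fixed $\lambda$, the static cost $c_\lambda(x,y)$ generated by $|v|^{2+\varepsilon(x)}$ behaves like $(d_g(x,y)/\lambda)^{2+\varepsilon(x)}$ for $y$ near $x$. Its Hessians therefore scale like $d_g(x,y)^{\varepsilon(x)}$: they degenerate where $\varepsilon>0$ and blow up where $\varepsilon<0$ as $y\to x$. Consequently:
\begin{itemize}
\item $c_\lambda$ is not a $C^2$-small, let alone $C^4$-small, perturbation of $d_g^2/2$ near the diagonal.
\item Non-degeneracy of $D^2_{xy}c_\lambda$ and the smoothness needed for the Ma--Trudinger--Wang theory fail exactly there.
\item The same singularity undercuts the uniform semiconcavity you invoke: where $\varepsilon<0$ the cost is not semiconcave at the diagonal, so the usual inf-convolution argument gives no uniform constant.
\item Independently, $\|\varepsilon\|_{C^2}\le\varepsilon_0$ does not control the fourth derivatives of the cost that enter the MTW tensor.
\end{itemize}
You cannot localize away from the diagonal, because by the previous paragraph part of the mass always moves arbitrarily little. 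Without a uniform H\"older bound on $\nabla\phi_t^\varepsilon$, the interpolation inequality yields nothing, and the $L^\infty$ closeness feeding Step~3 remains unproved.

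To salvage $|v_t^\varepsilon|\le 2C$ you would need an argument that does not go through regularity of the potential. One possibility is to get uniform convergence of the optimal displacements from stability of $c_\lambda$-cyclically monotone supports, together with uniqueness and continuity of the limiting $\Wb$ map. You would then convert displacement into speed along the characteristics of the Hamilton--Jacobi flow.
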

	
	\begin{proof}
		By the stability result (Theorem \ref{thm:stability} below), $\|v_t^\varepsilon - v_t^2\|_{L^2(\rho_t^2)} \leq C \|\varepsilon\|_\infty$. Since $v_t^2$ is bounded between $c$ and $C$, for $\|\varepsilon\|_\infty$ sufficiently small, $v_t^\varepsilon$ remains in $[c/2, 2C]$ on most of the support.
	\end{proof}
	
	\subsection{Energy estimate for geodesic stability}
	
	Let $(\rho_t^2, v_t^2)$ and $(\rho_t^\varepsilon, v_t^\varepsilon)$ be the $\Wb$ and $\Wp$-geodesics respectively.
	
	\begin{lemma}[Construction of an interpolation] \label{lem:interpolation}
		For each $t \in [0,1]$, let $T_t : M \to M$ be the optimal transport map for the quadratic cost $d_g^2(x,y)/2$ sending $\rho_t^\varepsilon$ to $\rho_t^2$. For $s \in [0,1]$, define
		\[
		\rho_t^{s} = (S_t^s)_\# \rho_t^\varepsilon, \quad \text{where } S_t^s(x) = \exp_x(s \exp_x^{-1}(T_t(x))).
		\]
		Then $\rho_t^{0} = \rho_t^\varepsilon$, $\rho_t^{1} = \rho_t^2$, and $(\rho_t^{s})_{s\in[0,1]}$ is a $\Wb$-geodesics between $\rho_t^\varepsilon$ and $\rho_t^2$.
	\end{lemma}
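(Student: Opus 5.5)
The plan is to identify $(\rho_t^s)_{s\in[0,1]}$ with McCann's displacement interpolation between $\rho_t^\varepsilon$ and $\rho_t^2$. The endpoint identities are immediate; the geodesic property rests on McCann's theorem for the quadratic cost on a compact manifold.

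\textbf{Step 1: existence of $T_t$.} For fixed $t$ I apply McCann's theorem for the cost $d_g^2/2$. This requires $\rho_t^\varepsilon\ll\vol_g$, which holds for a.e.\ $t$ by the definition of admissible pairs. It gives a unique optimal map
\[
T_t(x)=\exp_x(-\nabla\psi_t(x)),
\]
with $\psi_t$ a $d_g^2/2$-concave function. Such $\psi_t$ is semiconcave, hence differentiable $\vol_g$-a.e., so $T_t$ is defined $\rho_t^\varepsilon$-a.e. For $\rho_t^\varepsilon$-a.e.\ $x$, the point $T_t(x)$ lies outside the cut locus of $x$. Thus $\exp_x^{-1}(T_t(x))=-\nabla\psi_t(x)$ is well defined, and so is $S_t^s(x)=\exp_x(-s\nabla\psi_t(x))$. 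If one wants to avoid the measure-zero exceptional set entirely, one can work with the smooth densities of Assumption \ref{ass:bounded} and the regularity of Lemma \ref{lem:velocity_bounds}.

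\textbf{Step 2: endpoints.} Since $S_t^0=\mathrm{id}$, we get $\rho_t^0=\rho_t^\varepsilon$. Since $S_t^1=T_t$, we get $\rho_t^1=(T_t)_\#\rho_t^\varepsilon=\rho_t^2$, because $T_t$ transports $\rho_t^\varepsilon$ to $\rho_t^2$.

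\textbf{Step 3: geodesic property.} I would show that for $0\le s<s'\le1$,
\[
\Wb(\rho_t^s,\rho_t^{s'})\le (s'-s)\,\Wb(\rho_t^\varepsilon,\rho_t^2).
\]
Consider the coupling $(S_t^s,S_t^{s'})_\#\rho_t^\varepsilon$. Along the minimizing geodesic $\sigma\mapsto\exp_x(\sigma\exp_x^{-1}T_t(x))$ we have
\[
d_g\bigl(S_t^s(x),S_t^{s'}(x)\bigr)\le (s'-s)\,d_g\bigl(x,T_t(x)\bigr).
\]
Integrating gives
\[
\Wb^2(\rho_t^s,\rho_t^{s'})\le (s'-s)^2\int d_g^2\bigl(x,T_t(x)\bigr)\,d\rho_t^\varepsilon=(s'-s)^2\,\Wb^2(\rho_t^\varepsilon,\rho_t^2).
\]
The reverse inequality follows from the triangle inequality. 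For $s<s'$,
\[
\Wb(\rho_t^\varepsilon,\rho_t^2)\le \Wb(\rho_t^0,\rho_t^s)+\Wb(\rho_t^s,\rho_t^{s'})+\Wb(\rho_t^{s'},\rho_t^1)\le (s+(s'-s)+(1-s'))\,\Wb(\rho_t^\varepsilon,\rho_t^2),
\]
and this chain is an equality. Hence every intermediate inequality, including the bound on $\Wb(\rho_t^s,\rho_t^{s'})$, is an equality. So $(\rho_t^s)_s$ is a constant-speed $\Wb$-geodesic.

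\textbf{Main obstacle.} The delicate step is the one in Step 1: that $T_t(x)$ avoids the cut locus of $x$ for $\rho_t^\varepsilon$-a.e.\ $x$. This is needed for $\exp_x^{-1}$ to be unambiguous and for the curve to be minimizing. I would cite McCann's argument (and Cordero-Erausquin, McCann and Schmuckenschläger for Riemannian manifolds) that semiconcavity of $\psi_t$ forces this. If only measurable selections are needed, any choice of minimizing geodesic works and Step 3 goes through unchanged. If one also wants absolute continuity of $\rho_t^s$ for $s<1$, that too follows from the same reference.
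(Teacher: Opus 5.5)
Your proposal is correct and follows the same route as the paper. The paper identifies $(\rho_t^s)_s$ as the displacement interpolation induced by the optimal map $T_t$ and cites Ambrosio--Gigli--Savar\'e for the geodesic property; your coupling $(S_t^s,S_t^{s'})_\#\rho_t^\varepsilon$ combined with the triangle-inequality squeeze is the standard argument behind that citation. (The paper's extra remark that $S_t^s$ is optimal between $\rho_t^\varepsilon$ and $\rho_t^s$ also follows from your equality $\Wb(\rho_t^0,\rho_t^s)=s\,\Wb(\rho_t^\varepsilon,\rho_t^2)$.)
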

	
	\begin{proof}
		The map $S_t^s$ transports $\rho_t^\varepsilon$ to $\rho_t^{s}$ by construction. Since $T_t$ is optimal for the quadratic cost, $S_t^s$ is optimal between $\rho_t^\varepsilon$ and $\rho_t^{s}$, and $s \mapsto \rho_t^{s}$ is a $\Wb$-geodesic \cite{Ambrosio2005}.
	\end{proof}
	
	\begin{lemma}[Energy estimate] \label{lem:energy_estimate}
		Under Assumptions \ref{ass:pexp} and \ref{ass:bounded}, there exists a constant $C > 0$ such that for $\|\varepsilon\|_\infty$ sufficiently small,
		\[
		\frac{d}{dt} \Wb^2(\rho_t^\varepsilon, \rho_t^2) \leq C \|\varepsilon\|_\infty \Wb^2(\rho_0,\rho_1) + C \|\varepsilon\|_\infty^2.
		\]
	\end{lemma}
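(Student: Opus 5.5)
The plan is to differentiate $t \mapsto \Wb^2(\rho_t^\varepsilon,\rho_t^2)$ with the first-variation formula for the quadratic Wasserstein distance along two absolutely continuous curves, and then to control the velocity mismatch $v_t^\varepsilon - v_t^2$ through the optimality systems of Theorem \ref{thm:existence}. We use the optimal map $T_t$ of Lemma \ref{lem:interpolation}, from $\rho_t^\varepsilon$ to $\rho_t^2$. The classical derivative formula (Ambrosio--Gigli--Savaré, valid for a.e.\ $t$ for absolutely continuous curves on a compact manifold) gives
\[
\frac12\frac{d}{dt}\Wb^2(\rho_t^\varepsilon,\rho_t^2) = -\int_M \langle \exp_x^{-1}T_t(x), v_t^\varepsilon(x)\rangle \rho_t^\varepsilon\, d\vol_g + \int_M \langle \exp_x^{-1}T_t(x), \Pi_{x\to T_t(x)}^{-1} v_t^2(T_t(x))\rangle \rho_t^\varepsilon\, d\vol_g,
\]
where $\Pi$ denotes parallel transport along the minimizing geodesic. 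This rewrites the derivative as the pairing of the displacement field $\exp_x^{-1}T_t$ with the transported velocity difference $\Pi^{-1}v_t^2\circ T_t - v_t^\varepsilon$.

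Next we split this velocity difference into two pieces:
\[
\bigl(\Pi^{-1}v_t^2\circ T_t - v_t^2\bigr) + \bigl(v_t^2 - v_t^\varepsilon\bigr).
\]
\begin{itemize}
\item \emph{First piece (regularity).} By Lemma \ref{lem:velocity_bounds}, $\nabla\phi_t^2$ is $C^{1,\alpha}$. The first piece is therefore bounded pointwise by $C\, d_g(x,T_t(x))$. Its contribution is at most $C\,\Wb^2(\rho_t^\varepsilon,\rho_t^2)$, which is a Grönwall-type term and would be absorbed in the integrated form.
\item \emph{Second piece (the $\varepsilon$-dependence).} We use the velocity structure $v_t^\varepsilon = \lambda_\varepsilon|\nabla\phi_t^\varepsilon|^{q(x)-2}\nabla\phi_t^\varepsilon$ with $q-2 = -\varepsilon/(1+\varepsilon) = O(\varepsilon)$. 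Together with the bounds $c/2\le|v_t^\varepsilon|\le 2C$ of Lemma \ref{lem:perturbed_velocity_bounds}, this gives $\bigl||\nabla\phi|^{q-2}-1\bigr|\le C\|\varepsilon\|_\infty$, since $\log|\nabla\phi|$ is bounded. The remaining discrepancy between the potentials $\phi^\varepsilon_t$ and $\phi^2_t$ is handled by comparing the Hamilton--Jacobi equations: the Hamiltonian of Proposition \ref{prop:HJ_variable_p} differs from $\frac14|\xi|^2$ by $O(\|\varepsilon\|_\infty)$ uniformly on bounded sets of $\xi$. A viscosity comparison argument then yields $\|\nabla\phi^\varepsilon_t - \nabla\phi^2_t\| = O(\|\varepsilon\|_\infty)$ after normalizing the scaling $\lambda_\varepsilon$.
\end{itemize}

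To close the estimate, apply Cauchy--Schwarz to the second piece:
\[
\int_M |\exp_x^{-1}T_t|\,|v_t^2-v_t^\varepsilon|\,\rho_t^\varepsilon \le \Wb(\rho_t^\varepsilon,\rho_t^2)\,\|v_t^2-v_t^\varepsilon\|_{L^2(\rho_t^\varepsilon)}.
\]
By Young's inequality this is bounded by $\|\varepsilon\|_\infty\Wb^2(\rho_0,\rho_1)$ plus $\|\varepsilon\|_\infty^{-1}\|v^2-v^\varepsilon\|^2$. We then bound $\|v^2-v^\varepsilon\|^2$ by $\|\varepsilon\|_\infty^2\cdot\|\varepsilon\|_\infty$-small quantities, or use $\Wb(\rho_t^\varepsilon,\rho_t^2)\le C\,\Wb(\rho_0,\rho_1)$, which follows from the velocity bounds. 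The velocity magnitudes are comparable to $\Wb(\rho_0,\rho_1)$, since $\lambda_\varepsilon$ and $\Wb(\rho_0,\rho_1)$ agree to first order. The resulting bound is $C\|\varepsilon\|_\infty\Wb^2(\rho_0,\rho_1)+C\|\varepsilon\|_\infty^2$.

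\textbf{Main obstacle.} The hard step is the $O(\|\varepsilon\|_\infty)$ control of $\nabla\phi_t^\varepsilon-\nabla\phi_t^2$: viscosity comparison only gives $\phi$-closeness, not closeness of gradients. I would first try to exploit semiconcavity of both solutions, which holds uniformly since the Hamiltonians are uniformly convex near the velocity range $[c/2,2C]$. The standard interpolation $\|\nabla u\|_\infty\lesssim\|u\|_\infty^{1/2}\|D^2u\|_\infty^{1/2}$ would then give a weaker rate, to be upgraded using the $C^{1,\alpha}$ bound. There is also a circularity to address: Lemma \ref{lem:perturbed_velocity_bounds} relies on Theorem \ref{thm:stability}, which presumably relies on this lemma. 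To avoid it, I would run a continuity/bootstrap argument in $\|\varepsilon\|_\infty$.
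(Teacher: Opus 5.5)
\textbf{Verdict: genuine gap.} Your reduction is sound, but the one estimate that carries the lemma, the $\varepsilon$-smallness of $\delta$ below, is never established, and the tool you propose cannot supply it.

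Your reduction is more careful than the paper's own argument. The paper starts from the same first-variation formula. It then bounds $\|v^2_t\circ T_t\|_{L^2(\rho^\varepsilon_t)}$ and $\|v^\varepsilon_t\|_{L^2(\rho^\varepsilon_t)}$ separately by $\Wb$ and $\Wp$; the second bound goes through Proposition~\ref{prop:expansion}, which itself invokes Theorem~\ref{thm:stability}, so your circularity worry is well founded. That yields $E'\le 2\sqrt{E}\,(2\Wb+C\|\varepsilon\|_\infty)$, with $E(t):=\Wb^2(\rho^\varepsilon_t,\rho^2_t)$. This integrates only to $\sqrt{E(t)}\le(2\Wb+C\|\varepsilon\|_\infty)\,t$, with no smallness in $\varepsilon$, so the missing estimate cannot be borrowed from the paper. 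Your split into a Lipschitz piece, bounded by $\Lambda\,d_g(x,T_t(x))$ with $\Lambda:=\sup_t\|\nabla v^2_t\|_\infty$, and a same-point mismatch is what keeps the cancellation. It also shows exactly what is required. Set $\delta:=\sup_t\|v^\varepsilon_t-v^2_t\|_{L^2(\rho^\varepsilon_t)}$; then $\tfrac12E'\le\Lambda E+\sqrt{E}\,\delta$. Gr\"onwall gives $\sqrt{E}\le e^{\Lambda}\delta$, and substituting back gives the pointwise bound $E'\le C\delta^2$. So $\delta\le C\|\varepsilon\|_\infty^{1/2}\Wb(\rho_0,\rho_1)$ already suffices for the lemma as stated. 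This replaces both your ``absorbed in the integrated form'' remark and the Young-inequality bookkeeping.

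Viscosity comparison controls two Hamilton--Jacobi flows issued from the same datum. Here it gives at best $\|\phi^\varepsilon_t-\phi^2_t\|_\infty\le\|\phi^\varepsilon_0-\phi^2_0\|_\infty+Ct\|\varepsilon\|_\infty$. But $\phi^\varepsilon_0$ and $\phi^2_0$ are the initial potentials of two different transport problems, each fixed implicitly by the terminal constraint $\rho_1$, and nothing in your argument controls their difference. Controlling it is a quantitative stability statement for optimal transport under perturbation of the cost, complicated here by the Luxemburg normalization $\lambda_\varepsilon$, and it is the real content of the lemma. One natural route works at the static level. With $\lambda_\varepsilon$ frozen, the $\Wp$-optimal plan is optimal for a cost within $O(\|\varepsilon\|_\infty)$ of $d_g^2/\lambda_\varepsilon^2$, hence $O(\|\varepsilon\|_\infty)$-suboptimal for the quadratic problem. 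A quantitative-uniqueness estimate for the regular quadratic potential would then make the two optimal maps $O(\|\varepsilon\|_\infty^{1/2})$-close in $L^2(\rho_0)$, which is the kind of input your argument needs.

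In your semiconcavity fallback the rate is not the issue. The $\|\varepsilon\|_\infty^{1/2}$ it yields is enough. In $L^2$ it even follows from one-sided semiconcavity alone: integrate $|\nabla(u-w)|^2$ by parts against $\Delta u-\Delta w$, which are measures of bounded mass on a closed manifold. What is missing are its two inputs:
\begin{itemize}
\item the sup-closeness of the potentials, which is the initial-data problem above;
\item uniform semiconcavity, which you justify by uniform convexity of $H$ on $[c/2,2C]$.
\end{itemize}
That lower bound, from Lemma~\ref{lem:perturbed_velocity_bounds}, cannot hold. Since $v_t$ is a power of the gradient of a potential on a closed manifold, it vanishes at the potential's extrema, while $\rho_t>0$ everywhere. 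Near $\xi=0$ the function $|\xi|^{q(x)}$ is degenerate or has unbounded Hessian, according to the sign of $\varepsilon(x)$. Your pointwise bound on $|\nabla\phi|^{q-2}\nabla\phi-\nabla\phi$ survives anyway, since $\sup_{0<s\le C}|s^{1-\eta}-s|=O(|\eta|)$ with $\eta=\varepsilon/(1+\varepsilon)$. The claim that $\log|\nabla\phi|$ is bounded, however, is false.
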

	
	\begin{proof}
		Let $E(t) = \Wb^2(\rho_t^\varepsilon, \rho_t^2) = \frac{1}{2} \int_M d_g(T_t(x),x)^2 \rho_t^\varepsilon(x) d\vol_g$. Differentiating and using the transport equation satisfied by $T_t$ (cf. \cite{Ambrosio2005}), we obtain
		\[
		\frac{dE}{dt} = \int_M \langle \exp_x^{-1}(T_t(x)), v_t^2(T_t(x)) - v_t^\varepsilon(x) \rangle_g \rho_t^\varepsilon(x) d\vol_g.
		\]
		By the Cauchy-Schwarz inequality,
		\[
		\left| \frac{dE}{dt} \right| \leq \sqrt{2E(t)} \left( \|v_t^2 \circ T_t\|_{L^2(\rho_t^\varepsilon)} + \|v_t^\varepsilon\|_{L^2(\rho_t^\varepsilon)} \right).
		\]
		Using Proposition \ref{prop:expansion} and Lemma \ref{lem:velocity_bounds}, we have
		\[
		\|v_t^\varepsilon\|_{L^2(\rho_t^\varepsilon)} = \Wp(\rho_0,\rho_1) = \Wb(\rho_0,\rho_1) + O(\|\varepsilon\|_\infty),
		\]
		and $\|v_t^2 \circ T_t\|_{L^2(\rho_t^\varepsilon)} = \Wb(\rho_0,\rho_1)$. Thus,
		\[
		\frac{dE}{dt} \leq 2\sqrt{E(t)} \left( 2\Wb(\rho_0,\rho_1) + C\|\varepsilon\|_\infty \right).
		\]
		By Grönwall's inequality and since $E(0)=0$, we obtain the result.
	\end{proof}
	
	\begin{theorem}[Geodesic stability] \label{thm:stability}
		Under Assumptions \ref{ass:pexp} and \ref{ass:bounded}, there exist constants $C_1, C_2 > 0$ such that for $\|\varepsilon\|_\infty$ sufficiently small,
		\[
		\sup_{t\in[0,1]} \Wb(\rho_t^\varepsilon, \rho_t^2) \leq C_1 \|\varepsilon\|_\infty \Wb(\rho_0,\rho_1),
		\]
		and
		\[
		\int_0^1 \int_M |v_t^\varepsilon - v_t^2|^2 \rho_t^2 d\vol_g dt \leq C_2 \|\varepsilon\|_\infty \Wb^2(\rho_0,\rho_1).
		\]
	\end{theorem}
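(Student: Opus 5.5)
The plan is to deduce both bounds from the energy estimate of Lemma \ref{lem:energy_estimate}, the interpolation of Lemma \ref{lem:interpolation}, and the strong convexity of the quadratic action near its minimizer.

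\textbf{First bound.} Set $E(t)=\Wb^2(\rho_t^\varepsilon,\rho_t^2)$. Both curves share the endpoints, so $E(0)=E(1)=0$. Lemma \ref{lem:energy_estimate} gives the differential inequality
\[
E'(t)\le 2\sqrt{E(t)}\bigl(2\Wb(\rho_0,\rho_1)+C\|\varepsilon\|_\infty\bigr).
\]
On its own this is not small: integrating $\frac{d}{dt}\sqrt{E}$ only yields $\sqrt{E}\lesssim \Wb(\rho_0,\rho_1)$. I would therefore sharpen the estimate before integrating. In the formula for $E'$, I would replace $v_t^2\circ T_t - v_t^\varepsilon$ by the splitting
\[
v_t^2\circ T_t - v_t^2 \;+\; \bigl(v_t^2 - v_t^\varepsilon\bigr).
\]
For the first piece, the $C^{1,\alpha}$ regularity in Lemma \ref{lem:velocity_bounds} makes $v_t^2$ Lipschitz, so this piece is bounded by $\mathrm{Lip}(v^2)\sqrt{E}$. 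For the second piece I would use the Cauchy--Schwarz inequality. The result is
\[
\frac{d}{dt}\sqrt{E}\le C\sqrt{E}+\|v_t^2-v_t^\varepsilon\|_{L^2(\rho_t^\varepsilon)}.
\]
Grönwall's inequality with $E(0)=0$ then gives $\sup_t\sqrt{E}\le C\int_0^1\|v_t^\varepsilon-v_t^2\|\,dt$. This reduces the first bound to the velocity estimate, up to exchanging $\rho_t^\varepsilon$ and $\rho_t^2$ as weights. That exchange is harmless because both densities are bounded above and below (Assumption \ref{ass:bounded} and its propagation along the geodesic).

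\textbf{Second bound (velocity).} I would use a variational comparison. The pair $(\rho^2,v^2)$ minimizes the quadratic action $\mathcal A_2(\rho,v)=\int_0^1\!\int|v|^2\rho$, and $(\rho^\varepsilon,v^\varepsilon)$ is an admissible competitor. Passing to momentum variables $m=\rho v$, the action is jointly convex, and strictly so in $v$ near densities bounded below. A first-variation argument then gives
\[
\mathcal A_2(\rho^\varepsilon,v^\varepsilon)-\mathcal A_2(\rho^2,v^2)\ge c\int_0^1\!\!\int|v^\varepsilon-v^2|^2\rho^2 .
\]
This step uses $v^2=\nabla\phi^2$ and integration by parts against the continuity equation, since the cross term vanishes by optimality. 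To bound the left side, I would compare both actions with the variable-exponent action. Pointwise,
\[
|v|^{2+\varepsilon}=|v|^2\bigl(1+\varepsilon\ln|v|+O(\varepsilon^2\ln^2|v|)\bigr),
\]
and Lemma \ref{lem:perturbed_velocity_bounds} keeps $|v|$ in $[c/2,2C]$, so the logarithms are bounded. The normalization in Theorem \ref{thm:existence} and the minimality of $(\rho^\varepsilon,v^\varepsilon)$ for the $p(\cdot)$ problem then give
\[
\mathcal A_2(\rho^\varepsilon,v^\varepsilon)\le \Wb^2(\rho_0,\rho_1)+C\|\varepsilon\|_\infty\Wb^2(\rho_0,\rho_1).
\]
This combination should yield the stated $C_2\|\varepsilon\|_\infty\Wb^2$.

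\textbf{Main obstacle.} The hardest step is the first bound. The velocity estimate naturally gives $\|v^\varepsilon-v^2\|_{L^2}\lesssim\|\varepsilon\|_\infty^{1/2}\Wb$, whereas the first bound asks for a linear rate $\|\varepsilon\|_\infty$. To upgrade the rate, I would try a linearization. The difference $v^\varepsilon-v^2$ should solve, to first order, the Jacobi (linearized geodesic) equation forced by a source term of size $\varepsilon\ln|v^2|\,v^2$. Nondegeneracy of the quadratic Hessian, namely a coercivity in the spirit of the $\Ric$-independent Poincaré control on $[0,1]$ with fixed endpoints, would then give a linear rate. Failing that, I would settle for the square-root rate and adjust the constants.

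A secondary difficulty is a circularity: Lemma \ref{lem:perturbed_velocity_bounds} is itself proved using this theorem. To avoid it, I would run a continuity (bootstrap) argument in $\|\varepsilon\|_\infty$. First assume the velocity bounds $[c/4,4C]$, then prove the estimates under that assumption, and finally recover the bounds $[c/2,2C]$ for $\|\varepsilon\|_\infty$ small.
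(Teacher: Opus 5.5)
\textbf{Verdict: the proposal has a genuine gap.} The first bound is reached only at rate $\|\varepsilon\|_\infty^{1/2}$, not the stated linear rate.

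\textbf{What works.} Your Gr\"onwall reduction of the first bound to the velocity defect is sound. Your variational comparison does give the second bound. In fact, testing against the Hamilton--Jacobi equation for the potential of $v^2$ and against the continuity equation gives an exact identity,
\[
\mathcal{A}_2(\rho^\varepsilon,v^\varepsilon)-\mathcal{A}_2(\rho^2,v^2)=\int_0^1\!\int_M|v^\varepsilon-v^2|^2\rho^\varepsilon\,d\vol_g\,dt .
\]
This is more solid than the paper's interpolation argument, which only controls $\int_0^1\Wb^2(\rho_t^\varepsilon,\rho_t^2)\,dt$ and not a velocity difference.

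\textbf{The gap: the linear rate.} Chaining your two steps yields only $\sup_t\Wb(\rho_t^\varepsilon,\rho_t^2)\lesssim\|\varepsilon\|_\infty^{1/2}\Wb(\rho_0,\rho_1)$. No adjustment of constants turns $\|\varepsilon\|_\infty^{1/2}$ into $\|\varepsilon\|_\infty$. The Jacobi-field linearization presupposes differentiability of $\varepsilon\mapsto(\rho^\varepsilon,v^\varepsilon)$ and a coercivity estimate, and you supply neither. The paper does not help either: even granting Lemma~\ref{lem:energy_estimate} as stated, integrating it gives only $\Wb^2(\rho_t^\varepsilon,\rho_t^2)\le C\|\varepsilon\|_\infty\Wb^2(\rho_0,\rho_1)+C\|\varepsilon\|_\infty^2$, again a square-root rate.

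\textbf{A way to recover a linear rate.} The point you miss is that the action gap is much smaller than $C\|\varepsilon\|_\infty\Wb^2$, because the first-order terms of the two curves nearly cancel. Set $\lambda=\lambda_\varepsilon$, $h(u)=|u|^2\ln|u|$, $G(\rho,v)=\int_0^1\!\int_M\varepsilon\,h(v/\lambda)\,\rho$ and $\Delta^2=\int_0^1\!\int_M|v^\varepsilon-v^2|^2\rho^\varepsilon$.
\begin{enumerate}
\item Combine three facts: the identity above, the minimality $1=\int_0^1\!\int_M|v^\varepsilon/\lambda|^{p(x)}\rho^\varepsilon\le\int_0^1\!\int_M|v^2/\lambda|^{p(x)}\rho^2$, and the expansion $|u|^{2+\varepsilon}=|u|^2+\varepsilon h(u)+O(\varepsilon^2)$ for bounded $u$. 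Together they give $\Delta^2\le\lambda^2\bigl(G(\rho^2,v^2)-G(\rho^\varepsilon,v^\varepsilon)\bigr)+O(\|\varepsilon\|_\infty^2\lambda^2)$.
\item Since $h$ is Lipschitz on bounded sets, the velocity part of this difference is $O(\|\varepsilon\|_\infty\lambda\Delta)$.
\item The density part is at most $\lambda^2\sup_t\mathrm{Lip}\bigl(\varepsilon\,h(v_t^2/\lambda)\bigr)\sup_tW_1(\rho_t^2,\rho_t^\varepsilon)$. Your own Gr\"onwall step gives $\sup_tW_1\le C\Delta$.
\item Hence $\Delta^2\lesssim\eta\lambda\Delta+\eta^2\lambda^2$, so $\Delta\lesssim\eta\lambda$. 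This gives both bounds at a linear rate, but with $\eta=\|\varepsilon\|_{C^1}$ in place of $\|\varepsilon\|_\infty$, because the density part costs $\nabla\varepsilon$.
\end{enumerate}

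\textbf{Second gap: the bootstrap does not close.} Your bootstrap is meant to break the circularity with Lemma~\ref{lem:perturbed_velocity_bounds}. But every estimate you derive is an $L^2$ or $\Wb$ estimate, and such estimates never return the pointwise bound $|v_t^\varepsilon|\le 2C$ a.e.; the paper's proof of that lemma has the same defect. You need two independent inputs:
\begin{enumerate}
\item an a priori bound on $\|v^\varepsilon/\lambda_\varepsilon\|_{L^\infty}$, for instance from Lipschitz regularity of the optimal map for the cost induced by $|v|^{p(x)}$;
\item a positive lower bound on $\rho_t^\varepsilon$, needed to exchange the weights $\rho^\varepsilon$ and $\rho^2$; the paper bounds only $\rho_t^2$.
\end{enumerate}

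\textbf{Smaller corrections.} Drop the lower velocity bounds altogether. The one in Lemma~\ref{lem:velocity_bounds} cannot hold: the continuous field $\nabla\phi_t^2$ vanishes at the extrema of $\phi_t^2$ on the compact manifold $M$. It is also not needed, since $|u|^2\ln|u|$ and $|u|^2\ln^2|u|$ stay bounded as $u\to0$. Finally, expand in $\ln|v/\lambda_\varepsilon|$ rather than $\ln|v|$. Because of the $x$-dependent factor $\lambda_\varepsilon^{-\varepsilon(x)}$, the $\Wp$-geodesic minimizes $\int_0^1\!\int_M|v/\lambda_\varepsilon|^{p(x)}\rho$, not $\int_0^1\!\int_M|v|^{p(x)}\rho$.
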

	
	\begin{proof}
		The first inequality follows from Lemma \ref{lem:energy_estimate} and the definition of $E(t)$. The second is obtained by using the fact that along the interpolation $\rho_t^{s}$, the kinetic energy is given by $\int |\exp_x^{-1}(T_t(x))|^2 \rho_t^\varepsilon d\vol_g$, and integrating in $t$.
	\end{proof}
	
	\section{Quadratic expansion of the Finsler metric}\label{s4}
	
	\begin{proposition}[Quadratic expansion] \label{prop:expansion}
		For $\rho \in \mathcal{P}_{ac}(M)$ with $m \leq \rho \leq M$ and $\mu \in T_\rho \mathcal{P}(M)$, let $\lambda_\varepsilon = F_{p(\cdot)}(\rho,\mu)$ and let $v_\varepsilon$ be the optimal velocity satisfying
		\[
		\int_M \left|\frac{v_\varepsilon}{\lambda_\varepsilon}\right|^{p(x)} \rho \, d\vol_g = 1, \qquad \mu = -\divergence(\rho v_\varepsilon).
		\]
		Let $v_0$ be the optimal velocity for the Riemannian metric $F_2$, normalized by $\int_M |v_0|^2 \rho = 1$ and $\mu = -\divergence(\rho v_0)$. Then
		\[
		\lambda_\varepsilon^2 = 1 + \int_M |v_0|^2 \varepsilon(x) \ln |v_0| \rho \, d\vol_g + O(\|\varepsilon\|_\infty^2).
		\]
		Equivalently,
		\[
		F_{p(\cdot)}^2(\rho,\mu) = F_2^2(\rho,\mu) + \int_M |v_0|^2 \varepsilon(x) \ln |v_0| \rho \, d\vol_g + O(\|\varepsilon\|_\infty^2 F_2^2).
		\]
	\end{proposition}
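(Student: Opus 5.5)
The plan is to compare the Luxemburg constraint with exponent $p(x)=2+\varepsilon(x)$ against the quadratic one, and to bound $\lambda_\varepsilon^2$ from above and from below by the same first-order quantity
\[
A:=\int_M |v_0|^2\varepsilon\ln|v_0|\,\rho\,d\vol_g ,
\]
up to $O(\|\varepsilon\|_\infty^2)$. The basic tool is the pointwise expansion
\[
s^{2+\varepsilon}=s^2\bigl(1+\varepsilon\ln s+R(s,\varepsilon)\bigr),\qquad |R(s,\varepsilon)|\le C\varepsilon^2\ln^2 s\, s^{|\varepsilon|},
\]
which is uniform for $s$ in a compact subset of $(0,\infty)$. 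The equivalent form of the statement follows from the first by the homogeneity $F_{p(\cdot)}(\rho,t\mu)\approx tF_{p(\cdot)}(\rho,\mu)$ applied with $t=F_2(\rho,\mu)$; the non-homogeneity only contributes an $\varepsilon\ln F_2$ term, which I would absorb into the error on bounded sets of $F_2$.

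\textbf{Upper bound.} Since $-\divergence(\rho v_0)=\mu$, the field $v_0$ is an admissible competitor in the definition of $F_{p(\cdot)}$. Hence $\lambda_\varepsilon\le\bar\lambda$, where $\bar\lambda$ is defined by $\int_M|v_0/\bar\lambda|^{2+\varepsilon}\rho=1$.

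\begin{itemize}
\item A crude comparison of the modular with exponents between $p_-$ and $p_+$ first gives $\bar\lambda=1+O(\|\varepsilon\|_\infty)$.
\item Expanding with $s=|v_0|/\bar\lambda$ and using $\int|v_0|^2\rho=1$ yields
\[
1=\bar\lambda^{-2}\Bigl(1+A-\ln\bar\lambda\int\varepsilon|v_0|^2\rho+O(\varepsilon^2)\Bigr).
\]
\item The middle term is $O(\varepsilon^2)$ because $\ln\bar\lambda=O(\varepsilon)$.
\end{itemize}

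This gives $\lambda_\varepsilon^2\le 1+A+O(\|\varepsilon\|_\infty^2)$.

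\textbf{Lower bound.} Write $v_\varepsilon=v_0+w$. Since both fields solve $\mu=-\divergence(\rho\,\cdot)$, we have $\divergence(\rho w)=0$. The quadratic optimizer is a gradient, $v_0=\nabla\phi$; this uses the weighted Hodge decomposition, available because $m\le\rho\le M$. Hence $\int\rho\, v_0\cdot w=0$ and
\[
\int|v_\varepsilon|^2\rho=1+\|w\|^2_{L^2(\rho)}.
\]
Expanding the constraint for $v_\varepsilon$ in the same way, and using $\lambda_\varepsilon=1+O(\varepsilon)$, gives
\[
\lambda_\varepsilon^2=1+\|w\|^2_{L^2(\rho)}+\int\varepsilon|v_\varepsilon|^2\ln|v_\varepsilon|\rho+O(\varepsilon^2).
\]
The function $s\mapsto s^2\ln s$ is Lipschitz on compact subsets of $(0,\infty)$. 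Therefore the middle integral equals $A+O(\|\varepsilon\|_\infty\|w\|_{L^2(\rho)})$. By Young's inequality, $\|w\|^2-C\varepsilon\|w\|\ge -C'\varepsilon^2$, so
\[
\lambda_\varepsilon^2\ge 1+A-O(\|\varepsilon\|_\infty^2).
\]
Combining the two bounds proves the claim. As a by-product, the upper bound then forces $\|w\|_{L^2(\rho)}=O(\varepsilon)$.

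\textbf{Main obstacle.} The hard step is justifying the uniform Taylor remainder, that is, controlling $\varepsilon^2|v|^2\ln^2|v|$ and the Lipschitz estimate for $s^2\ln s$.

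\begin{itemize}
\item Near $|v|=0$ the logarithm is harmless, since $s^2\ln^2 s\to0$.
\item Large $|v|$ are a genuine problem. The pointwise bound $s^{2+\varepsilon}\ge s^2$ for $s\ge1$ degenerates unless $v_\varepsilon$ is a priori bounded.
\end{itemize}

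For this I would invoke $L^\infty$ control on $v_\varepsilon$ and $v_0$, in the spirit of Lemmas \ref{lem:velocity_bounds} and \ref{lem:perturbed_velocity_bounds}, restricted to a bounded range $|v|\le 2C$. Failing that, I would truncate: split $M$ into $\{|v|\le R\}$ and its complement. On the complement, the modular constraint together with $p_->1$ gives the tail bound $\int_{|v|>R}|v|^2\rho\le R^{-p_-+2}\cdot$const when $\varepsilon\ge 0$ there. One then chooses $R$ as a suitable negative power of $\|\varepsilon\|_\infty$ so that the tail is absorbed into the $O(\varepsilon^2)$ error, at the cost of logarithmic factors. Those logarithmic factors may be the honest form of the estimate.
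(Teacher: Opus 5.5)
Setting aside the two gaps below, your argument for the normalized expansion is correct, and it takes a genuinely different route from the paper's.

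\textbf{Comparison with the paper.} The paper imports $\|v_\varepsilon-v_0\|_{L^2(\rho)}=O(\|\varepsilon\|_\infty)$ from Theorem~\ref{thm:stability}. It then linearizes the constraint around $(v_0,1)$, drops everything quadratic in $\delta v$, and kills the linear term by first-order optimality of $v_0$. You use the same orthogonality $\int_M\rho\langle v_0,w\rangle\,d\vol_g=0$, but you keep $\|w\|_{L^2(\rho)}^2$ with its favourable sign. The upper bound then follows at once from admissibility of $v_0$. Young's inequality absorbs the $O(\|\varepsilon\|_\infty\|w\|)$ cross term, and $\|w\|_{L^2(\rho)}=O(\|\varepsilon\|_\infty)$ comes out as a consequence instead of being assumed.

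This is a real gain. The paper's input is circular: Theorem~\ref{thm:stability} rests on Lemma~\ref{lem:energy_estimate}, which invokes this very proposition. Even taken at face value, it only bounds a \emph{squared} space-time $L^2$ norm by $O(\|\varepsilon\|_\infty)$, which is too weak to discard $\|\delta v\|^2$. You are also right that no lower bound on $|v_0|$ is needed. The paper's $|v_0|\ge c>0$ from Lemma~\ref{lem:velocity_bounds} cannot hold anyway, since a gradient field on a closed manifold vanishes somewhere.

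\textbf{First gap: the passage to the unnormalized form.} $F_{p(\cdot)}(\rho,\cdot)$ is a Luxemburg norm, hence exactly $1$-homogeneous. Rescaling your normalized result gives
\[
F_{p(\cdot)}^2(\rho,\mu)=F_2^2(\rho,\mu)+\int_M|v_0|^2\,\varepsilon\,\ln\frac{|v_0|}{F_2(\rho,\mu)}\,\rho\,d\vol_g+O\bigl(\|\varepsilon\|_\infty^2F_2^2\bigr).
\]
The leftover term $-\ln F_2\int_M\varepsilon|v_0|^2\rho\,d\vol_g$ is first order in $\varepsilon$; for $\varepsilon\equiv\eta$ it equals $-\eta F_2^2\ln F_2$. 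So it cannot be absorbed into $O(\|\varepsilon\|_\infty^2F_2^2)$ when $F_2\neq1$, on bounded sets or otherwise. The displayed ``equivalent'' form is false as stated. The paper's ``restoring the normalization'' step makes the same slip, and this is what produces the $\delta^2\bar\varepsilon\ln\delta$ term in Lemma~\ref{lem:Wp_test}.

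\textbf{Second gap: the a priori bound on $v_\varepsilon$.} Your Taylor remainder and Lipschitz step both need this bound. Lemma~\ref{lem:perturbed_velocity_bounds} cannot be borrowed: it is derived from Theorem~\ref{thm:stability} (circular again), and it passes from an $L^2$ estimate to a pointwise one.

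The truncation fallback fails as well. The $2+\varepsilon$ modular only yields the tail bound $R^{2-p_-}=R^{-\min\varepsilon}$, and nothing where $\varepsilon<0$. Making the tail $O(\|\varepsilon\|_\infty^2)$ forces $\ln R\gtrsim\ln(1/\|\varepsilon\|_\infty)/\min\varepsilon$. At that point the bulk losses $\varepsilon^2\ln^2R\,R^{\|\varepsilon\|_\infty}$ are no longer small.

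\textbf{A fix that avoids the bound on $v_\varepsilon$.} Do not expand at $v_\varepsilon$ at all. Instead use the convexity inequality
\[
|u+h|^{p}\ge|u|^{p}+p|u|^{p-2}\langle u,h\rangle+c\min(|h|^2,|h|^{p}),
\]
which holds with $c>0$ uniform for $|u|\le C_0$ and $p$ near $2$. Apply it at $u=v_0/\lambda_\varepsilon$, $h=w/\lambda_\varepsilon$.

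After subtracting $2\lambda_\varepsilon^{-2}\int_M\rho\langle v_0,w\rangle\,d\vol_g=0$, the linear term is $O(\|\varepsilon\|_\infty)\int_M|w|\rho\,d\vol_g$. The remainder absorbs this up to $O(\|\varepsilon\|_\infty^2)$. You are left with $\int_M|v_0/\lambda_\varepsilon|^{p(x)}\rho\,d\vol_g\le1+O(\|\varepsilon\|_\infty^2)$, that is, $\lambda_\varepsilon\ge\bar\lambda\,(1-O(\|\varepsilon\|_\infty^2))$. Together with your $\lambda_\varepsilon\le\bar\lambda$, this closes the proof.

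This route needs only two inputs:
\begin{itemize}
\item $\|v_0\|_\infty<\infty$, from elliptic regularity for $-\divergence(\rho\nabla\phi)=\mu$;
\item $\lambda_\varepsilon\ge(2\|v_0\|_\infty)^{-1}$, which follows from $\|v_\varepsilon\|_{L^1(\rho)}\le2\lambda_\varepsilon$ and testing $v_\varepsilon$ against $\nabla\phi$.
\end{itemize}
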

	
	\begin{proof}
		\textbf{Step 1: Normalization and notation.}
		We normalize so that $F_2(\rho,\mu)=1$, i.e.\ $\int_M |v_0|^2\rho\,d\vol_g = 1$.
		By Lemma~\ref{lem:velocity_bounds}, there exist constants $0 < c \le C < \infty$ such that
		\begin{equation}\label{eq:v0_bounds}
			c \le |v_0(x)| \le C \quad \text{a.e.\ on } \supp(\rho).
		\end{equation}
		In particular, $\ln|v_0|$ is bounded: $|\ln|v_0|| \le \max(|\ln c|,|\ln C|) =: L_0$.
		
		\textbf{Step 2: Perturbation of the optimal velocity.}
		By Theorem~\ref{thm:stability} (geodesic stability), the optimal velocity
		$v_\varepsilon$ for the $p(\cdot)$-metric satisfies
		\begin{equation}\label{eq:v_perturb}
			\|v_\varepsilon - v_0\|_{L^2(\rho)} \le C_2 \|\varepsilon\|_\infty,
		\end{equation}
		so that $v_\varepsilon = v_0 + \delta v$ with $\|\delta v\|_{L^2(\rho)} = O(\|\varepsilon\|_\infty)$.
		Similarly, write $\lambda_\varepsilon = 1 + \delta\lambda$ where $\delta\lambda = O(\|\varepsilon\|_\infty)$;
		this follows from the implicit function theorem applied to the normalization
		constraint~\eqref{eq:normalization_pvar} below.
		
		\textbf{Step 3: Expansion of the normalization constraint.}
		The normalization condition reads
		\begin{equation}\label{eq:normalization_pvar}
			\int_M \left|\frac{v_\varepsilon}{\lambda_\varepsilon}\right|^{p(x)} \rho\,d\vol_g = 1.
		\end{equation}
		Write $p(x)=2+\varepsilon(x)$ and expand the integrand:
		\begin{align}
			\left|\frac{v_\varepsilon}{\lambda_\varepsilon}\right|^{p(x)}
			&= \left|\frac{v_\varepsilon}{\lambda_\varepsilon}\right|^{2}
			\exp\!\left(\varepsilon(x)\ln\left|\frac{v_\varepsilon}{\lambda_\varepsilon}\right|\right)
			\notag\\
			&= \left|\frac{v_\varepsilon}{\lambda_\varepsilon}\right|^{2}
			\left(1 + \varepsilon(x)\ln\left|\frac{v_\varepsilon}{\lambda_\varepsilon}\right|
			+ O\!\left(\|\varepsilon\|_\infty^2\,|\ln|v_\varepsilon/\lambda_\varepsilon||\right)\right).
			\label{eq:expansion_integrand}
		\end{align}
		Since $|v_\varepsilon/\lambda_\varepsilon|$ is uniformly bounded away from $0$ and $\infty$
		(by~\eqref{eq:v0_bounds} and Lemma~\ref{lem:perturbed_velocity_bounds}),
		the $O$ term is bounded, and~\eqref{eq:expansion_integrand} is valid uniformly.
		
		\textbf{Step 4: Substitution of perturbations.}
		Substituting $v_\varepsilon = v_0 + \delta v$ and $\lambda_\varepsilon = 1 + \delta\lambda$
		into~\eqref{eq:normalization_pvar} and retaining only first-order terms:
		\begin{align}
			1 &= \int_M \left|(v_0+\delta v)/(1+\delta\lambda)\right|^2
			\left(1 + \varepsilon\ln\left|(v_0+\delta v)/(1+\delta\lambda)\right|\right)
			\rho\,d\vol_g + O(\|\varepsilon\|_\infty^2)
			\notag\\
			&= \int_M |v_0|^2(1 + 2\langle v_0,\delta v\rangle/|v_0|^2 - 2\delta\lambda)
			(1 + \varepsilon\ln|v_0|)
			\rho\,d\vol_g + O(\|\varepsilon\|_\infty^2)
			\notag\\
			&= \underbrace{\int_M |v_0|^2\rho}_{=1}
			+ 2\int_M\langle v_0,\delta v\rangle\rho - 2\delta\lambda
			+ \int_M |v_0|^2\varepsilon\ln|v_0|\,\rho\,d\vol_g
			+ O(\|\varepsilon\|_\infty^2).
			\label{eq:expansion_norm}
		\end{align}
		The cross-terms $\int\langle v_0,\delta v\rangle\varepsilon\ln|v_0|\,\rho\,d\vol_g$
		are of order $\|\varepsilon\|_\infty \cdot \|\delta v\|_{L^2(\rho)} = O(\|\varepsilon\|_\infty^2)$
		and are absorbed into the error term.
		
		\textbf{Step 5: First-order optimality for $\delta v$.}
		The velocity $v_0$ is the $L^2(\rho)$-projection: for any admissible perturbation
		$\delta v$ compatible with $\mu = -\div(\rho(v_0+\delta v))$, the first-order
		optimality condition gives $\int_M\langle v_0,\delta v\rangle\rho\,d\vol_g = 0$.
		Hence~\eqref{eq:expansion_norm} simplifies to
		\[
		0 = -2\delta\lambda + \int_M |v_0|^2\varepsilon\ln|v_0|\,\rho\,d\vol_g + O(\|\varepsilon\|_\infty^2),
		\]
		which yields
		\begin{equation}\label{eq:delta_lambda}
			\delta\lambda = \frac12\int_M |v_0|^2\varepsilon(x)\ln|v_0|\,\rho\,d\vol_g + O(\|\varepsilon\|_\infty^2).
		\end{equation}
		
		\textbf{Step 6: Conclusion.}
		Since $\lambda_\varepsilon = F_{p(\cdot)}(\rho,\mu)$ and $\lambda_\varepsilon = 1+\delta\lambda$,
		we have $\lambda_\varepsilon^2 = (1+\delta\lambda)^2 = 1 + 2\delta\lambda + O(\|\varepsilon\|_\infty^2)$.
		Substituting~\eqref{eq:delta_lambda}:
		\[
		F_{p(\cdot)}^2(\rho,\mu) = \lambda_\varepsilon^2
		= 1 + \int_M |v_0|^2\varepsilon(x)\ln|v_0|\,\rho\,d\vol_g + O(\|\varepsilon\|_\infty^2).
		\]
		Restoring the normalization $F_2(\rho,\mu)=1$ gives the general formula
		
		\begin{flushleft}
			\[
			F_{p(\cdot)}^2(\rho,\mu)
			= F_2^2(\rho,\mu)
			+ \int_M |v_0|^2\varepsilon(x)\ln|v_0|\,\rho\,d\vol_g
			+ O\!\left(\|\varepsilon\|_\infty^2\,F_2^2(\rho,\mu)\right). \qedhere
			\]
		\end{flushleft} 
		
	\end{proof}

	\section{Stability of entropy convexity}\label{s5}
	
	\subsection{Regularity of entropy along geodesics}
	
	\begin{lemma}[Entropy regularity] \label{lem:entropy_regularity}
		Under Assumption \ref{ass:bounded}, there exists a constant $L > 0$ depending only on $m, M$ such that for any $\Wt$-geodesics $(\rho_t)$ connecting densities satisfying $a \leq \rho_0, \rho_1 \leq b$,
		\begin{equation}
			\left| \frac{d}{dt} H(\rho_t) \right| \leq L \cdot \Wt(\rho_0,\rho_1) \quad \text{a.e. } t \in [0,1].
		\end{equation}
		Consequently, for any two densities $\mu, \nu$ with $a \leq \mu, \nu \leq b$,
		\begin{equation}
			|H(\mu) - H(\nu)| \leq L \cdot \Wt(\mu,\nu).
		\end{equation}
	\end{lemma}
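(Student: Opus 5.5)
We would prove the first inequality from the first variation formula recalled in the sketch of the second variation of entropy, namely
\[
\frac{d}{dt}H(\rho_t)=\int_M \langle \nabla_g \log\rho_t, v_t\rangle_g\,\rho_t\,d\vol_g=\int_M \langle \nabla_g \rho_t, v_t\rangle_g\,d\vol_g ,
\]
valid along the $\Wt$-geodesic with velocity $v_t=\nabla\phi_t$. Applying Cauchy--Schwarz in $L^2(\rho_t)$ gives
\[
\Bigl|\frac{d}{dt}H(\rho_t)\Bigr|\le \Bigl(\int_M |\nabla\log\rho_t|^2\rho_t\,d\vol_g\Bigr)^{1/2}\Bigl(\int_M |v_t|^2\rho_t\,d\vol_g\Bigr)^{1/2}.
\]
The second factor is constant in $t$ along a constant-speed geodesic and equals $\Wt(\rho_0,\rho_1)$. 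So the whole task reduces to a uniform-in-$t$ bound on the Fisher information $\I(\rho_t)=\int|\nabla\log\rho_t|^2\rho_t$.

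For the Fisher information we use Assumption \ref{ass:bounded}. By the cited result of \cite{Figalli2010}, the interpolants satisfy $a'\le\rho_t\le b'$. Then $\I(\rho_t)\le \|\nabla\rho_t\|_{L^2}^2/a'$, and it remains to control $\|\nabla\rho_t\|_{L^2}$ uniformly in $t$. We would get this from the Monge--Amp\`ere representation $\rho_t(T_t(x))\det DT_t(x)=\rho_0(x)$ with $T_t(x)=\exp_x(t\nabla\phi_0(x))$. Lemma \ref{lem:velocity_bounds} gives $C^{1,\alpha}$ control of $\nabla\phi_t$, hence of $DT_t$ and of its inverse (its eigenvalues are bounded away from $0$ and $\infty$). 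Differentiating the Jacobian equation then expresses $\nabla\rho_t$ through $\nabla\rho_0$ and second derivatives of $T_t$. This is where we need the Caffarelli-type $C^{2,\alpha}$ regularity of $\phi_0$ for smooth positive densities from \cite{Figalli2010}. The outcome is $\sup_t \I(\rho_t)\le L^2$, with $L$ depending on $a,b$ and on the regularity constants of $\rho_0,\rho_1$ and $M$.

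The second inequality follows by integration. Take $(\rho_t)$ to be the $\Wt$-geodesic from $\nu$ to $\mu$. The map $t\mapsto H(\rho_t)$ is absolutely continuous, since its derivative is bounded a.e. Hence
\[
|H(\mu)-H(\nu)|\le\int_0^1\Bigl|\frac{d}{dt}H(\rho_t)\Bigr|\,dt\le L\,\Wt(\mu,\nu).
\]

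The main obstacle is the uniform Fisher information bound. Bounds on the density values alone do not control gradients, so $L$ cannot depend only on $a,b$ as the statement suggests. We would make the dependence on the regularity of the endpoint densities explicit, and interpret the constant as depending on the data of Assumption \ref{ass:bounded} together with $(M,g)$.

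An alternative route avoids second derivatives of $T_t$. It uses the above-tangent inequality of displacement convexity under $\Ric_g\ge K$ (Theorem \ref{thm:lott_villani}) at the endpoints, where $\I(\rho_0)$ and $\I(\rho_1)$ are finite because the endpoints are smooth. Semiconvexity of $t\mapsto H(\rho_t)$ then bounds $\frac{d}{dt}H(\rho_t)$ between its endpoint values, up to $|K|\Wt^2$. This gives $|\frac{d}{dt}H|\le(\sqrt{\I(\rho_0)}+\sqrt{\I(\rho_1)}+|K|\Wt)\Wt$. Since $M$ is compact, $\Wt\le\operatorname{diam}M$, so this is again of the form $L\,\Wt$. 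We would try this route first, since it is more robust.
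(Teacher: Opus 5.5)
You are right that the lemma cannot be proved as stated, and you have found exactly where the paper's argument breaks. The paper's proof matches your first two lines: the first variation formula, then Cauchy--Schwarz with $\|v_t\|_{L^2(\rho_t)}=\Wt(\rho_0,\rho_1)$. It then asserts $\|\nabla\log\rho_t\|_{L^\infty}\le C(a,b)$ ``since $\rho_t$ is bounded''. Value bounds give no gradient control, and the conclusion is in fact false. On $\mathbb{S}^1$ take $\nu\equiv\frac{1}{2\pi}$ and $\mu_k=\frac{1}{2\pi}\bigl(1+\tfrac12\sin k\theta\bigr)$:
\begin{itemize}
\item both are smooth with $\frac{1}{4\pi}\le\mu_k,\nu\le\frac{3}{4\pi}$;
\item the difference $H(\mu_k)-H(\nu)=\frac{1}{2\pi}\int_0^{2\pi}\bigl(1+\tfrac12\sin s\bigr)\log\bigl(1+\tfrac12\sin s\bigr)\,ds>0$ does not depend on $k$;
\item coupling inside each arc of length $2\pi/k$ gives $\Wt(\mu_k,\nu)\le 2\pi/k$.
\end{itemize}
So no $L=L(a,b)$ works for the second inequality, and hence none works for the first. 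What you actually prove is a corrected lemma in which $L$ depends on the Fisher information of the endpoints. That is a repair of the paper, not an alternative proof of its claim.

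Your second route is sound and is the one to keep. The lemma assumes no curvature bound, but none is needed: $M$ is compact, so $\Ric_g\ge K_0$ for some $K_0\in\mathbb{R}$. Write $h(t)=H(\rho_t)$ and $\Wt=\Wt(\rho_0,\rho_1)$. Applying Theorem~\ref{thm:lott_villani} to the restrictions of the geodesic to subintervals shows that $h(t)-\frac{K_0}{2}\Wt^2 t^2$ is convex. Together with lower semicontinuity of $H$, this gives continuity at $t=0,1$. It is this semiconvexity, not the a.e.\ bound on $h'$, that makes $h$ absolutely continuous for your integration step. The above-tangent formula at the smooth endpoints gives $h'(0^+)\ge-\sqrt{\I(\rho_0)}\,\Wt$. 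Reversing the geodesic gives $h'(1^-)\le\sqrt{\I(\rho_1)}\,\Wt$. Hence
\[
-\sqrt{\I(\rho_0)}\,\Wt-|K_0|\,\Wt^2\;\le\; h'(t)\;\le\;\sqrt{\I(\rho_1)}\,\Wt+|K_0|\,\Wt^2\qquad\text{for a.e. } t,
\]
and with $\Wt\le\operatorname{diam}M$ this is your bound. The second inequality is then the HWI inequality used in both directions, with $L$ depending on $\I(\mu)$, $\I(\nu)$, $K_0$ and $\operatorname{diam}M$.

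Your first route does not work in the paper's generality, for three reasons.
\begin{itemize}
\item Caffarelli-type regularity of optimal maps on a compact Riemannian manifold requires the Ma--Trudinger--Wang condition. By Loeper's examples this fails as soon as some sectional curvature is negative, and smooth positive densities can then have discontinuous optimal maps.
\item Differentiating the Jacobian equation brings in $D^2T_t$, i.e.\ third derivatives of $\phi_0$, so $C^{2,\alpha}$ would not suffice anyway.
\item Lemma~\ref{lem:velocity_bounds} is not a safe input: its lower bound $|\nabla\phi_t^2|\ge c>0$ is impossible for the gradient of a $C^1$ function on a closed manifold.
\end{itemize}

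Be aware also that the corrected lemma does not rescue its later uses. In Step~2 of Theorem~\ref{thm:direct1} it is applied to $\rho_t$ and $\rho_t^2$. There one would need Fisher information bounds on intermediate densities, which is what your first route was trying to supply and your second route does not.
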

	
	\begin{proof}
		For a $\Wt$ geodesic $(\rho_t)$, we have $\frac{d}{dt} H(\rho_t) = \int_M \nabla \log \rho_t \cdot v_t \, \rho_t \, d\vol_g$. Since $\rho_t$ is bounded, $\|\nabla \log \rho_t\|_{L^\infty} \leq C(a,b)$ and $\|v_t\|_{L^2(\rho_t)} = \Wt(\rho_0,\rho_1)$. The Cauchy-Schwarz inequality gives the result.
	\end{proof}
	
	\subsection{Local entropy expansion}
	
	\begin{lemma}[Second order entropy expansion] \label{lem:entropy_expansion}
		Let $x \in M$ and $v \in T_xM$ with $|v|_g = 1$. For small $\delta > 0$, define
		\begin{equation}
			\rho_0^\delta = \frac{1}{\vol_g(B(x,\delta))} \mathbf{1}_{B(x,\delta)}, \quad 
			\rho_1^\delta = \frac{1}{\vol_g(B(\exp_x(\delta v),\delta))} \mathbf{1}_{B(\exp_x(\delta v),\delta)}.
		\end{equation}
		Then for all $t \in [0,1]$,
		\begin{equation}
			H(\rho_t^\delta) = H(\rho_0^\delta) + t(H(\rho_1^\delta) - H(\rho_0^\delta)) - \frac{t(1-t)}{2} \Ric_x(v,v) \delta^2 + o(\delta^2),
		\end{equation}
		where $\rho_t^\delta$ is the $\Wt$-geodesics.
	\end{lemma}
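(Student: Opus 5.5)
The plan is to reduce the statement to the second variation formula for $H$ along $\Wt$-geodesics, stated above as the Proposition on the second variation of entropy, and then to localize everything at the point $x$. Set $f(t)=H(\rho_t^\delta)$. Any $C^2$ function on $[0,1]$ satisfies the interpolation identity
\[
f(t)-(1-t)f(0)-tf(1) = -\int_0^1 G(t,s)\,f''(s)\,ds,
\]
where $G(t,s)=s(1-t)$ for $s\le t$ and $G(t,s)=t(1-s)$ for $s\ge t$. Since $\int_0^1 G(t,s)\,ds = t(1-t)/2$, it suffices to prove
\[
f''(s) = \Ric_x(v,v)\,\delta^2 + o(\delta^2) \qquad \text{uniformly in } s\in[0,1].
\]
By the second variation formula,
\[
f''(s)=\int_M \Ric_g(v_s,v_s)\rho_s^\delta\,d\vol_g+\int_M|\nabla v_s|_g^2\rho_s^\delta\,d\vol_g .
\]
I will treat the two terms separately.

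\textbf{First term.} The supports of $\rho_0^\delta$ and $\rho_1^\delta$ lie in $B(x,3\delta)$. Every $\rho_s^\delta$ is obtained by displacement interpolation along geodesics of length at most $2\delta+O(\delta^2)$, so it is supported in $B(x,5\delta)$. On this ball, the continuity of $\Ric_g$ gives, with parallel transport to $x$ along radial geodesics,
\[
\Ric_g(w,w)=\Ric_x(\tilde w,\tilde w)+O(\delta)|w|^2 .
\]
I then need two facts.
\begin{enumerate}[label=(\alph*)]
\item $\int_M|v_s|^2\rho_s^\delta\,d\vol_g=\Wt^2(\rho_0^\delta,\rho_1^\delta)=\delta^2(1+o(1))$. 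The upper bound comes from the competitor map $y\mapsto\exp_y(\delta P_y v)$, where $P_y v$ denotes the parallel transport of $v$ from $x$ to $y$. The lower bound comes from Kantorovich duality tested with the function $\langle v,\exp_x^{-1}(\cdot)\rangle$.
\item The velocity field is close to constant, i.e.\ $\tilde v_s=\delta v+o(\delta)$ in $L^2(\rho_s^\delta)$. This lets me replace $\Ric_x(\tilde v_s,\tilde v_s)$ by $\delta^2\Ric_x(v,v)$.
\end{enumerate}

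\textbf{Strategy for (b) and for the second term.} I will work in normal coordinates at $x$ and rescale by $\delta$. Under the rescaled metric $g_\delta=\delta^{-2}g$, the rescaled balls have radius $1$, the metric $g_\delta$ is $C^2$-close to the Euclidean one (with error $O(\delta^2)$), and the two measures are uniform measures on unit balls differing by the unit translation $v$. In the Euclidean limit the optimal map is the exact translation $y\mapsto y+v$, whose gradient vanishes. Stability of optimal maps under $C^2$-small perturbations of the cost, for uniform densities on uniformly convex domains (Caffarelli-type regularity), then gives $\nabla v_s\to 0$ in the rescaled picture. Scaling back yields $\tilde v_s=\delta v+o(\delta)$, which is (b), and
\[
\int_M|\nabla v_s|^2\rho_s^\delta\,d\vol_g=o(\delta^2).
\]
This is the main obstacle. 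Under the Euclidean scaling, $\nabla v_s$ is dimensionless, so for the estimate to hold one needs $\nabla v_s=o(\delta)$ in the original scale, not merely $o(1)$. I would therefore try to obtain an explicit bound of size $O(\delta^2)$, using the fact that the curvature correction to the Euclidean cost is $O(\delta^2)$ in $C^2$ after rescaling.

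\textbf{Regularity of the data.} The densities $\rho_i^\delta$ are indicator functions, so they do not satisfy Assumption \ref{ass:bounded}, and the second variation formula is not directly justified. To handle this, I would mollify the indicators at a scale $\eta\ll\delta$ and apply the formula to the smooth densities. I would then pass to the limit $\eta\to0$ using the lower semicontinuity of $H$ together with the stability of $\Wt$-geodesics, or alternatively work directly with the Jacobian equation, for which the displacement-convexity computation of $\log J_s$ via the Riccati equation is valid without extra smoothness.
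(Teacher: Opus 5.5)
Your reduction is sound up to step (c). The Green's-function identity is correct, and (b) follows from qualitative stability of optimal plans after rescaling. Fact (a) holds once you repair the upper bound: the map $y\mapsto\exp_y(\delta P_yv)$ does not push $\rho_0^\delta$ exactly onto $\rho_1^\delta$, so it is not an admissible competitor; use the triangle inequality or rescaled stability of $\Wt$ instead.

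\textbf{Only one half of the expansion is obtained.} Since $|\nabla v_s|^2\ge0$, these facts give only
\[
f''(s)\ge\Ric_x(v,v)\delta^2-o(\delta^2),
\]
that is, the one-sided bound $H(\rho_t^\delta)\le(1-t)H(\rho_0^\delta)+tH(\rho_1^\delta)-\frac{t(1-t)}{2}\Ric_x(v,v)\delta^2+o(\delta^2)$. The reverse inequality is exactly the half used in Theorem~\ref{thm:reciproque1} to extract a lower bound on $\Ric_x(v,v)$. It rests entirely on (c), and (c) is not proved.

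\textbf{Why (c) is a genuine gap.} Because $\nabla v_s$ is invariant under the rescaling, stability and compactness give only $\nabla v_s\to0$, i.e.\ $\int_M|\nabla v_s|^2\rho_s^\delta=o(1)$. You need the quantitative bound $\|\nabla^2\hat\phi_\delta\|_{L^2}=o(\delta)$ for the rescaled optimal potential on the whole unit ball, including near the boundary where the densities jump.
\begin{itemize}
\item This amounts to a linearization (implicit-function) theorem for the second boundary value problem of a Monge--Amp\`ere-type equation with cost $\frac12\,d_{g_\delta}^2$, around the translation solution.
\item It is not available off the shelf. Caffarelli's boundary regularity is for the quadratic cost. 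Trudinger--Wang requires the MTW condition, and here the MTW tensor is $O(\delta^2)$ with no definite sign. Perturbative results near the quadratic cost are interior.
\item Saying you would ``try to obtain an explicit $O(\delta^2)$ bound'' names the missing step but does not supply it.
\item The same regularity is needed for your mollification step. Lower semicontinuity of $H$ gives only one inequality, and the $o(\delta^2)$ must be uniform in the mollification scale.
\end{itemize}

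\textbf{How the classical argument avoids this.} The paper's proof is only a citation of Lott--Villani, and the argument behind that citation prescribes the geodesic rather than the target ball.
\begin{itemize}
\item Take $\psi$ smooth with $\nabla\psi(x)=v$ and $\nabla^2\psi(x)=0$, for example $\langle v,\exp_x^{-1}(\cdot)\rangle$ cut off near $x$.
\item Set $\rho_s=(T_s)_\#\rho_0^\delta$ with $T_s(y)=\exp_y(s\delta\nabla\psi(y))$. For small $\delta$ this is a $\Wt$-geodesic, since small multiples of a $C^2$ function are $d_g^2/2$-convex.
\item Along each trajectory the Jacobi matrix solves $\ddot{\mathcal J}+R_s\mathcal J=0$, $\mathcal J(0)=I$, $\dot{\mathcal J}(0)=\delta\nabla^2\psi(y)=O(\delta^2)$ on $B(x,\delta)$, with $|R_s|=O(\delta^2)$.
\item Hence $U_s=\dot{\mathcal J}\mathcal J^{-1}=O(\delta^2)$ and, pointwise,
\[
-\frac{d^2}{ds^2}\log\det\mathcal J_s=\operatorname{tr}U_s^2+\Ric(\dot\gamma,\dot\gamma)=\delta^2\Ric_x(v,v)+O(\delta^3).
\]
\item Since $H(\rho_s)=H(\rho_0^\delta)-\int\log\det\mathcal J_s\,\rho_0^\delta$, this gives $f''(s)=\delta^2\Ric_x(v,v)+O(\delta^3)$, and your Green's-function identity finishes the proof.
\end{itemize}
The price is that the endpoint is only approximately the uniform measure on $B(\exp_x(\delta v),\delta)$. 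This is harmless for the converse theorem, which only needs some geodesic of length about $\delta$ with this expansion.

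So you should either restate the lemma for such constructed geodesics, or supply the quantitative second-boundary-value estimate. As it stands, step (c) is a gap.
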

	
	\begin{proof}
		This is a classical result in optimal transport on Riemannian manifolds \cite[Theorem 7.3]{LottVillani2009}.
	\end{proof}
	
	\subsection{Approximation by regular densities}
	
	\begin{lemma}[Approximation of test densities] \label{lem:density_approx}
		Let $\rho_0^\delta, \rho_1^\delta$ be the densities defined in Lemma \ref{lem:entropy_expansion}. There exist sequences of smooth densities $\rho_0^{\delta,n}, \rho_1^{\delta,n}$ satisfying Assumption \ref{ass:bounded} such that:
		\begin{enumerate}
			\item $\rho_0^{\delta,n} \to \rho_0^\delta$ and $\rho_1^{\delta,n} \to \rho_1^\delta$ in $L^1$;
			\item $\Wt(\rho_0^{\delta,n}, \rho_1^{\delta,n}) \to \delta$;
			\item $H(\rho_0^{\delta,n}) \to H(\rho_0^\delta)$ and $H(\rho_1^{\delta,n}) \to H(\rho_1^\delta)$.
		\end{enumerate}
	\end{lemma}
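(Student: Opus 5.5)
The plan is to build the approximations by mollification followed by a small convex combination with the uniform density. Items 1 and 3 then follow from standard continuity arguments, and item 2 reduces, by continuity of $\Wt$, to identifying the exact value of $\Wt(\rho_0^\delta,\rho_1^\delta)$.

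\emph{Construction.} Fix a smooth mollifier on $M$: in normal coordinates around each point, or via the heat semigroup $P_{s}$ with $s=1/n$. Set
\[
\tilde\rho_i^{\delta,n}=P_{1/n}\rho_i^\delta,\qquad \rho_i^{\delta,n}=(1-\theta_n)\,\tilde\rho_i^{\delta,n}+\frac{\theta_n}{\vol_g(M)},\qquad \theta_n=\frac1n,
\]
for $i=0,1$. Each $\rho_i^{\delta,n}$ is smooth and is a probability density. It is bounded below by $\theta_n/\vol_g(M)>0$, and bounded above by $\vol_g(B(\cdot,\delta))^{-1}+1/\vol_g(M)$. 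So it satisfies Assumption \ref{ass:bounded}, with constants $a,b$ that may depend on $n$; the assumption only asks for some $0<a<b$.

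\emph{Items 1 and 3.} Since $P_{1/n}f\to f$ in $L^1$ for $f\in L^1$, and $\theta_n\to0$, item 1 follows. For item 3, all densities are uniformly bounded by a constant $B_\delta$, and $L^1$ convergence gives a.e.\ convergence along subsequences. The map $r\mapsto r\log r$ is bounded on $[0,B_\delta]$, so dominated convergence on the compact $M$ gives $H(\rho_i^{\delta,n})\to H(\rho_i^\delta)$. Applying this to every subsequence yields convergence of the full sequence.

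\emph{Item 2.} On compact $M$, $L^1$ convergence of densities implies weak convergence of the measures, and $\Wt$ metrizes weak convergence. Hence $\Wt(\rho_0^{\delta,n},\rho_1^{\delta,n})\to\Wt(\rho_0^\delta,\rho_1^\delta)$. Concretely, $\Wt(\rho_i^{\delta,n},\rho_i^\delta)^2\le \operatorname{diam}(M)^2\|\rho_i^{\delta,n}-\rho_i^\delta\|_{L^1}$, and we conclude by the triangle inequality. The statement therefore reduces to proving $\Wt(\rho_0^\delta,\rho_1^\delta)=\delta$, and this is the main obstacle.

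\emph{Upper bound.} I would use the map $y\mapsto \exp_y(\delta V(y))$ on $B(x,\delta)$, where $V$ is the parallel transport of $v$ along radial geodesics from $x$. Its cost is $\delta^2$, but it pushes $\rho_0^\delta$ onto $\rho_1^\delta$ only up to a Jacobian error of order $\Ric\cdot\delta^2$.

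\emph{Lower bound.} I would use Kantorovich duality, or the $W_1$ bound $\Wt\ge W_1\ge\int f\,d(\rho_1^\delta-\rho_0^\delta)$, with a 1-Lipschitz Busemann-type $f$ adapted to the geodesic $s\mapsto\exp_x(sv)$.

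In the flat case both bounds are exactly $\delta$. On a curved manifold I expect only $\Wt(\rho_0^\delta,\rho_1^\delta)=\delta\,(1+O(\delta^2))$. If exact equality fails, I would change the construction rather than the limit: let $\rho_1^\delta(s)$ be the normalized indicator of $B(\exp_x(sv),\delta)$. The function $s\mapsto\Wt(\rho_0^\delta,\rho_1^\delta(s))$ is continuous, vanishes at $s=0$, and is $\ge s-o(s)$. The intermediate value theorem then gives $s_\delta=\delta+O(\delta^3)$ at which it equals $\delta$ exactly. Running the construction above with $\rho_1^\delta(s_\delta)$ in place of $\rho_1^\delta$ gives item 2 exactly. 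This substitution perturbs Lemma \ref{lem:entropy_expansion} only at order $o(\delta^2)$, which is all the subsequent argument uses.
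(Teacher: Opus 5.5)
Your treatment of items 1 and 3 is correct. In one respect it is more careful than the paper, whose entire proof is to mollify the normalized indicators and cite standard convolution estimates. A compactly supported mollifier leaves $\rho_i^{\delta,n}=0$ away from a neighbourhood of the ball, so the paper's approximants violate the lower bound $a>0$ in Assumption \ref{ass:bounded}. Your convex combination with the uniform density (or simply the strict positivity of the heat kernel) repairs this. Note that $a_n\to0$ is forced for any construction: an $L^1$-limit of densities bounded below by $a$ is $\ge a$ a.e., whereas $\rho_0^\delta$ vanishes off $B(x,\delta)$.

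The gap is item 2, and your own reduction shows it cannot be closed. By item 1 and your bound $\Wt(\rho_i^{\delta,n},\rho_i^\delta)^2\le\operatorname{diam}(M)^2\|\rho_i^{\delta,n}-\rho_i^\delta\|_{L^1}$, item 2 holds for an approximating sequence if and only if $\Wt(\rho_0^\delta,\rho_1^\delta)=\delta$ exactly. This identity fails on curved manifolds.

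Take the round $S^2\subset\mathbb{R}^3$ and let $R$ be the rotation by angle $\delta$ in the plane spanned by $x$ and $v$. It is an isometry with $Rx=\exp_x(\delta v)$, so $R_\#\rho_0^\delta=\rho_1^\delta$. A point $y$ at angle $\psi$ from the rotation axis moves by $d_g(y,Ry)=2\arcsin(\sin\psi\,\sin(\delta/2))$. This is $<\delta$ unless $y$ lies on the great circle through $x$ and $\exp_x(\delta v)$. Hence $\Wt^2(\rho_0^\delta,\rho_1^\delta)\le\int d_g(y,Ry)^2\rho_0^\delta\,d\vol_g<\delta^2$, and no sequence whatsoever satisfies items 1 and 2 together.

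So the lemma is false as stated. It is true on flat manifolds, where, as you note, translation and a locally linear $1$-Lipschitz test function give exactly $\delta$. The paper's appeal to convolution estimates silently assumes the exact identity.

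Your fallback does not rescue the statement. Replacing $\rho_1^\delta$ by the normalized indicator of $B(\exp_x(s_\delta v),\delta)$ changes the limit in items 1 and 3, so it proves a different lemma. Present it explicitly as a corrected statement. Alternatively, and more simply, keep the original densities and replace item 2 by $\Wt(\rho_0^{\delta,n},\rho_1^{\delta,n})\to\Wt(\rho_0^\delta,\rho_1^\delta)=\delta\,(1+o(1))$ as $\delta\to0$. Your two-sided bounds give this once the Jacobian error in the upper bound is controlled. It is also all that an argument at the $o(\delta^2)$ level, such as one built on Lemma \ref{lem:entropy_expansion}, can use.
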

	
	\begin{proof}
		Take $\rho_0^{\delta,n} = \eta_n * \mathbf{1}_{B(x,\delta)} / \vol_g(B(x,\delta))$ where $\eta_n$ is a mollifier. The properties follow from standard convolution estimates \cite{Ambrosio2005}.
	\end{proof}
	
	\subsection{Direct direction}
	
	\begin{theorem}[Stability of $K$-convexity] \label{thm:direct1}
		Let $(M,g)$ be a compact Riemannian manifold with $\Ric_g \geq K$. Let $p(x) = 2 + \varepsilon(x)$ with $\|\varepsilon\|_\infty$ sufficiently small. Then for any $\Wp$ geodesic $(\rho_t)$ connecting densities satisfying Assumption \ref{ass:bounded},
		\begin{equation}
			\begin{aligned}
				H(\rho_t) &\leq (1-t)H(\rho_0) + t H(\rho_1) - \frac{K}{2}t(1-t)\Wp^2(\rho_0,\rho_1) \\
				&\quad + C\|\varepsilon\|_\infty t(1-t)\Wp^2(\rho_0,\rho_1) + o(\|\varepsilon\|_\infty),
			\end{aligned}
		\end{equation}
		where $C$ depends only on $a,b, M, p_-, p_+$ and the geometry of $M$.
	\end{theorem}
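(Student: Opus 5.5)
The plan is a perturbative comparison: put the $\Wp$-geodesic $(\rho_t)=(\rho_t^\varepsilon)$ next to the $\Wb$-geodesic $(\rho_t^2)$ with the same endpoints $\rho_0,\rho_1$. On $(\rho_t^2)$ the Lott--Villani inequality (Theorem~\ref{thm:lott_villani}) holds exactly because $\Ric_g\ge K$. The remaining work is to transfer that inequality, term by term, to $(\rho_t^\varepsilon)$ and to replace $\Wb$ by $\Wp$, tracking the error in $\|\varepsilon\|_\infty$.

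\textbf{Step 1: entropy comparison.} Write
\[
H(\rho_t^\varepsilon)=H(\rho_t^2)+\bigl(H(\rho_t^\varepsilon)-H(\rho_t^2)\bigr).
\]
The bracket is controlled by the Lipschitz estimate of Lemma~\ref{lem:entropy_regularity} along the $\Wb$-geodesic joining $\rho_t^\varepsilon$ to $\rho_t^2$, which is the interpolation of Lemma~\ref{lem:interpolation}. This gives
\[
|H(\rho_t^\varepsilon)-H(\rho_t^2)|\le L\,\Wb(\rho_t^\varepsilon,\rho_t^2)\le LC_1\|\varepsilon\|_\infty\Wb(\rho_0,\rho_1)
\]
by Theorem~\ref{thm:stability}. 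This requires uniform density bounds $a''\le\rho_t^\varepsilon\le b''$ along the perturbed geodesic. I would justify these as in Assumption~\ref{ass:bounded}, through the velocity bounds of Lemma~\ref{lem:perturbed_velocity_bounds} and the PDE system of Theorem~\ref{thm:existence}.

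\textbf{Step 2: the $t(1-t)$ factor.} The entropy error in Step~1 is only $O(\|\varepsilon\|_\infty)$, with no $t(1-t)$ factor. At $t=0,1$ both geodesics coincide, so the error vanishes there. To produce the factor $t(1-t)$, I would apply the estimate on $[0,t]$ and on $[t,1]$ separately using Lemma~\ref{lem:energy_estimate}. Since $E(0)=E(1)=0$ and $|E'|\lesssim\|\varepsilon\|_\infty$, this gives $\Wb(\rho_t^\varepsilon,\rho_t^2)\lesssim\|\varepsilon\|_\infty\min(t,1-t)$ up to the normalisation of the distance. Whatever part cannot be put in the form $t(1-t)\Wp^2$ is dumped into the $o(\|\varepsilon\|_\infty)$ remainder that the statement allows. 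Concretely, the genuinely first-order part proportional to $t(1-t)\Wb^2$ goes into $C\|\varepsilon\|_\infty t(1-t)\Wp^2$, and cross terms of size $\|\varepsilon\|_\infty^2$ are absorbed into $o(\|\varepsilon\|_\infty)$.

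\textbf{Step 3: replacing the distance.} Lott--Villani gives
\[
H(\rho_t^2)\le(1-t)H(\rho_0)+tH(\rho_1)-\tfrac K2t(1-t)\Wb^2(\rho_0,\rho_1).
\]
To switch to $\Wp$, apply Proposition~\ref{prop:expansion} integrated along the geodesic, i.e.\ through the Benamou--Brenier formulation. This yields
\[
\Wp^2=\Wb^2+\int_0^1\!\!\int_M|v_t|^2\varepsilon\ln|v_t|\,\rho_t\,d\vol_g\,dt+O(\|\varepsilon\|_\infty^2\Wb^2).
\]
Since $\ln|v_t|$ is bounded by Lemma~\ref{lem:velocity_bounds}, the middle term is bounded in absolute value by $L_0\|\varepsilon\|_\infty\Wb^2$. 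Hence $|\Wb^2-\Wp^2|\le C\|\varepsilon\|_\infty\Wp^2$ for $\|\varepsilon\|_\infty$ small, and $-\tfrac K2 t(1-t)\Wb^2\le-\tfrac K2t(1-t)\Wp^2+\tfrac{|K|}{2}C\|\varepsilon\|_\infty t(1-t)\Wp^2$. Adding this to the bound from Steps~1--2 gives the stated inequality. The constant $C$ depends on $a,b$, on $L_0$ (through $c,C$), on $p_\pm$ and on the geometry via $L$ and $C_1$.

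\textbf{Main obstacle.} The hardest step is Step~2. Lemma~\ref{lem:entropy_regularity} only gives a Lipschitz bound linear in $\Wb$, whereas the statement wants a correction scaling like $t(1-t)\Wp^2$. I would first try the endpoint-vanishing Grönwall argument above. If it only yields $\|\varepsilon\|_\infty\Wb$ rather than $\|\varepsilon\|_\infty\Wb^2$, I would fall back on a second-variation approach. That would mean differentiating $H(\rho_t^\varepsilon)$ twice using \eqref{eq:continuity}--\eqref{eq:HJ}, obtaining $\int\Ric_g(v_t,v_t)\rho_t$ plus terms carrying $\nabla\varepsilon$ and $\varepsilon\ln|\nabla\phi_t|$, each bounded by $C\|\varepsilon\|_{C^2}\int|v_t|^2\rho_t$. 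Integrating against the Green's function of $-d^2/dt^2$ on $[0,1]$ then produces the $t(1-t)$ factor. In the worst case, any discrepancy that cannot be shaped this way is placed in the $o(\|\varepsilon\|_\infty)$ term.
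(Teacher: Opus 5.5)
Your Steps~1 and~3 reproduce the paper's proof (its Steps~1--3). These are Lott--Villani on $(\rho_t^2)$, the transfer $|H(\rho_t)-H(\rho_t^2)|\le LC_1\|\varepsilon\|_\infty\Wb(\rho_0,\rho_1)$ via Theorem~\ref{thm:stability} and Lemma~\ref{lem:entropy_regularity}, and the replacement of $\Wb^2$ by $\Wp^2$ via Lemma~\ref{lem:distance_comparison}. The gap is your Step~2. That is where the real content of the statement lies, and it fails on two counts.

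\textbf{First: the square root.} $E(t)=\Wb^2(\rho_t^\varepsilon,\rho_t^2)$ is a \emph{squared} distance. So $|E'|\lesssim\|\varepsilon\|_\infty$ with $E(0)=E(1)=0$ only gives $\Wb(\rho_t^\varepsilon,\rho_t^2)\lesssim(\|\varepsilon\|_\infty\min(t,1-t))^{1/2}$. That is half an order worse in $\varepsilon$ than Theorem~\ref{thm:stability}. Moreover, Lemma~\ref{lem:energy_estimate} is one-sided, so the $1-t$ side needs a time reversal. A linear factor would require $|\tfrac{d}{dt}\Wb(\rho_t^\varepsilon,\rho_t^2)|\lesssim\|\varepsilon\|_\infty\Wb(\rho_0,\rho_1)$, i.e.\ pointwise-in-time $L^2$ closeness of the two velocity fields. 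Nothing you cite provides this.

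\textbf{Second: linear versus quadratic in the distance.} Grant even $\Wb(\rho_t^\varepsilon,\rho_t^2)\lesssim\|\varepsilon\|_\infty\min(t,1-t)\Wb(\rho_0,\rho_1)$. The Lipschitz bound still leaves an error $\lesssim L\|\varepsilon\|_\infty t(1-t)\Wb(\rho_0,\rho_1)$, which is linear in the distance. It is not bounded by $C\|\varepsilon\|_\infty t(1-t)\Wp^2$ with $C$ depending only on $a,b,M,p_\pm$, because $\Wb/\Wp^2\to\infty$ as $\rho_1\to\rho_0$ within the class of Assumption~\ref{ass:bounded}. Nor can it be placed in $o(\|\varepsilon\|_\infty)$, since the bound you have for it is of exact order $\|\varepsilon\|_\infty$. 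A first-order (Lipschitz) entropy comparison is therefore structurally too weak. You need a second-order one, e.g.\ $|f''|\le C\|\varepsilon\|\,\Wb^2(\rho_0,\rho_1)$ for $f(t)=H(\rho_t^\varepsilon)-H(\rho_t^2)$. Together with $f(0)=f(1)=0$, this gives exactly the $t(1-t)\Wb^2$ shape.

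\textbf{How the paper handles this step.} The paper does not close it either. It keeps the raw term $LC_1\|\varepsilon\|_\infty\Wp$ and absorbs it into $C_*=\frac{KC}{2}+\frac{LC_1}{\Wp}$, using a lower bound $\Wp\ge c_0>0$ that is not among the hypotheses. This makes the constant depend on the endpoints, and it still does not produce the factor $t(1-t)$ near $t=0,1$.

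\textbf{Your fallback.} Computing $\tfrac{d^2}{dt^2}H(\rho_t^\varepsilon)$ directly along the $\Wp$-geodesic and integrating against the Green's function of $-d^2/dt^2$ is the route that could genuinely give the $t(1-t)\Wp^2$ structure. It is the scheme the paper uses for $\Renyi$ in Theorem~\ref{thm:converse}. As written, however, it is only a plan. You would have to bound two terms by $C\|\varepsilon\|\int_M|v_t|^2\rho_t$: the acceleration term produced by the $x$-dependence of the Lagrangian, and the non-symmetric part of $\nabla v_t$. That requires $L^\infty$ control of $\nabla\log\rho_t$ along the perturbed geodesic, and of the logarithmic factors coming from $\partial_x|v|^{p(x)}$. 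Since these terms carry $\nabla\varepsilon$ (and $\nabla^2\varepsilon$), the correction you would obtain is of size $\|\varepsilon\|_{C^1}$ or $\|\varepsilon\|_{C^2}$, not $\|\varepsilon\|_\infty$ as the statement claims.
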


	\begin{proof}[Proof of Theorem~\ref{thm:direct1}]
		We decompose the proof into four steps.
		
		\textbf{Step 1: Lott-Villani inequality along the $\Wb$-geodesics.}
		Let $(\rho_t^2)_{t\in[0,1]}$ be the $\Wb$-geodesics with the same endpoints
		$\rho_0,\rho_1$.  Since $\Ric_g \ge K$, Theorem~\ref{thm:lott_villani} gives
		\begin{equation}\label{eq:LV_W2}
			H(\rho_t^2) \le (1-t)H(\rho_0) + tH(\rho_1) - \frac{K}{2}t(1-t)\Wb^2(\rho_0,\rho_1).
		\end{equation}
		
		\textbf{Step 2: Transfer to the $\Wp$-geodesics via entropy regularity.}
		The endpoints of $(\rho_t)$ and $(\rho_t^2)$ coincide, so by
		Theorem~\ref{thm:stability} (geodesic stability),
		\[
		\sup_{t\in[0,1]}\Wb(\rho_t,\rho_t^2) \le C_1\|\varepsilon\|_\infty\Wb(\rho_0,\rho_1).
		\]
		Applying Lemma~\ref{lem:entropy_regularity} to the $\Wb$-geodesic between
		$\rho_t$ and $\rho_t^2$, we obtain
		\begin{equation}\label{eq:entropy_transfer}
			|H(\rho_t) - H(\rho_t^2)| \le L\cdot\Wb(\rho_t,\rho_t^2)
			\le LC_1\|\varepsilon\|_\infty\Wb(\rho_0,\rho_1).
		\end{equation}
		
		\textbf{Step 3: Comparison of $\Wb^2$ and $\Wp^2$.}
		By Lemma~\ref{lem:distance_comparison},
		\begin{equation}\label{eq:W_comparison}
			\Wb^2(\rho_0,\rho_1) = \Wp^2(\rho_0,\rho_1) + R,
			\quad |R| \le C\|\varepsilon\|_\infty\Wp^2(\rho_0,\rho_1).
		\end{equation}
		
		\textbf{Step 4: Assembly.}
		Combining~\eqref{eq:LV_W2}, \eqref{eq:entropy_transfer}, and~\eqref{eq:W_comparison}:
		\begin{align*}
			H(\rho_t)
			&= H(\rho_t^2) + \bigl(H(\rho_t) - H(\rho_t^2)\bigr)
			\\
			&\le (1-t)H(\rho_0) + tH(\rho_1)
			- \frac{K}{2}t(1-t)\Wb^2(\rho_0,\rho_1)
			+ LC_1\|\varepsilon\|_\infty\Wb(\rho_0,\rho_1)
			\\
			&= (1-t)H(\rho_0) + tH(\rho_1)
			- \frac{K}{2}t(1-t)\bigl(\Wp^2(\rho_0,\rho_1) + R\bigr)
			+ LC_1\|\varepsilon\|_\infty\Wb(\rho_0,\rho_1).
		\end{align*}
		Using $|R| \le C\|\varepsilon\|_\infty\Wp^2$ and
		$\Wb(\rho_0,\rho_1) = \Wp(\rho_0,\rho_1)(1+O(\|\varepsilon\|_\infty))$, we get
		\begin{align*}
			H(\rho_t)
			&\le (1-t)H(\rho_0) + tH(\rho_1)
			- \frac{K}{2}t(1-t)\Wp^2(\rho_0,\rho_1)
			\\
			&\quad
			+ \frac{K}{2}t(1-t)C\|\varepsilon\|_\infty\Wp^2(\rho_0,\rho_1)
			+ LC_1\|\varepsilon\|_\infty\Wp(\rho_0,\rho_1)
			+ O(\|\varepsilon\|_\infty^2\Wp^2).
		\end{align*}
		Setting $C_* := \frac{KC}{2} + \frac{LC_1}{\Wp}$ (which remains bounded since
		$\Wp \ge c_0 > 0$ for the densities considered), and absorbing the last term into
		$o(\|\varepsilon\|_\infty)$ as $\|\varepsilon\|_\infty \to 0$, we obtain
		\[
		H(\rho_t) \le (1-t)H(\rho_0) + tH(\rho_1)
		- \frac{K}{2}t(1-t)\Wp^2(\rho_0,\rho_1)
		+ C_*\|\varepsilon\|_\infty t(1-t)\Wp^2(\rho_0,\rho_1)
		+ o(\|\varepsilon\|_\infty), \qedhere
		\]
		where $C_*$ depends only on $a,b,M,p_-,p_+$ and the geometry of $(M,g)$.
	\end{proof}

	\subsection{Converse direction}
	
	\begin{lemma}[Distance comparison] \label{lem:distance_comparison}
		Let $\rho_0, \rho_1$ be two densities satisfying Assumption \ref{ass:bounded}. Then
		\begin{equation}
			|\Wp^2(\rho_0,\rho_1) - \Wt^2(\rho_0,\rho_1)| \le C \|\varepsilon\|_\infty \Wt^2(\rho_0,\rho_1),
		\end{equation}
		where $C$ depends only on $a,b$ and the geometry of $M$.
	\end{lemma}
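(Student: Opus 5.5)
We will derive the distance comparison from the quadratic expansion of the Finsler metric (Proposition~\ref{prop:expansion}), used along competitor curves in the dynamic formulation. The goal is an upper bound and a lower bound for $\Wp$ in terms of $\Wt$, both with relative error $O(\|\varepsilon\|_\infty)$. The basic pointwise fact is the following. For any velocity field $v$ and any $\lambda>0$ with $c\le |v/\lambda|\le C$ on the support,
\[
\left|\frac{v}{\lambda}\right|^{p(x)}=\left|\frac{v}{\lambda}\right|^{2}\bigl(1+\varepsilon(x)\ln|v/\lambda|+O(\|\varepsilon\|_\infty^2)\bigr),
\]
so that
\[
\Bigl|\,|v/\lambda|^{p(x)}-|v/\lambda|^2\Bigr|\le C_0\|\varepsilon\|_\infty |v/\lambda|^2,
\qquad C_0 = \max(|\ln c|,|\ln C|)+O(\|\varepsilon\|_\infty).
\]

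\textbf{Upper bound.} Take the $\Wt$-geodesic $(\rho^2_t,v^2_t)$ as a competitor in the definition of $\Wp$. Lemma~\ref{lem:velocity_bounds} gives $c\le |v_t^2|\le C$. Choose
\[
\lambda=\Wt(\rho_0,\rho_1)\,(1+C_0'\|\varepsilon\|_\infty)^{1/2}.
\]
By the pointwise estimate,
\[
\int_0^1\!\!\int_M |v^2_t/\lambda|^{p(x)}\rho^2_t \le (1+C_0\|\varepsilon\|_\infty)\,\frac{\Wt^2}{\lambda^2}\le 1
\]
for a suitable $C_0'$. Hence $\Wp^2\le \Wt^2(1+C\|\varepsilon\|_\infty)$. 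There is one subtlety: the ratio $|v/\lambda|$ must stay in a fixed compact range. This holds because $\lambda$ is comparable to $\Wt$, which is fixed and positive for the densities considered. Alternatively, we work in the normalized setting $\Wt=1$ and rescale, handling the non-homogeneity of $|v|^{p(x)}$ explicitly through the $\ln\lambda$ term. That term is bounded since $\lambda$ ranges over a compact set of $(0,\infty)$ determined by $a,b$ and $M$.

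\textbf{Lower bound.} Take the $\Wp$-minimizer $(\rho_t,v_t)$ from Theorem~\ref{thm:existence} with $\lambda_\varepsilon=\Wp$, so that $\iint |v_t/\lambda_\varepsilon|^{p(x)}\rho_t=1$. By Lemma~\ref{lem:perturbed_velocity_bounds}, $|v_t|$ is bounded above and below on the support. The pointwise estimate in the reverse direction then gives
\[
\iint |v_t|^2\rho_t\le \lambda_\varepsilon^2(1+C\|\varepsilon\|_\infty).
\]
Since $(\rho_t,v_t)$ is admissible for the Benamou--Brenier formula, this yields $\Wt^2\le \Wp^2(1+C\|\varepsilon\|_\infty)$. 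Combining the two bounds and using $\Wp^2\le 2\Wt^2$ for $\varepsilon$ small gives the claim. The constant depends only on the velocity bounds, hence only on $a,b$ and the geometry of $M$.

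\textbf{Main obstacle.} The hardest step is justifying the two-sided velocity bounds needed to control $\ln|v|$, especially for the $\Wp$-geodesic. Lemma~\ref{lem:perturbed_velocity_bounds} only gives them outside a set that is small in $L^2$. If that is insufficient, I would first split $M$ into the good set, where the bounds hold, and its complement. On the complement I would use the crude estimates $|s|^{p}\le |s|^2\max(s^{\varepsilon_0},s^{-\varepsilon_0})$ together with the fact that $s^2|\ln s|$ is integrable against $\rho_t$ near $s=0$. I would then show that the contribution of the complement is $O(\|\varepsilon\|_\infty)\Wt^2$ by Chebyshev's inequality applied to the $L^2$ stability estimate of Theorem~\ref{thm:stability}. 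A second point to check is the vanishing-velocity region where $v_0=0$. There $|v|^{p}$ and $|v|^2$ both vanish, so the relative error is actually harmless, but $\ln|v|$ blows up. One therefore uses the bound $s^{2}|\ln s|\le C$ for $s\le 1$ rather than a bound on $\ln s$ alone.
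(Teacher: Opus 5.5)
Your argument breaks at the step you flagged as a subtlety. The pointwise expansion has to be applied to the \emph{normalized} speed $s=|v/\lambda|$ with $\lambda\approx\Wt(\rho_0,\rho_1)$. Lemma~\ref{lem:velocity_bounds} bounds only the unnormalized speed $|v_t^2|$, which yields nothing better than $s\lesssim 1/\Wt(\rho_0,\rho_1)$. Your claim that $\lambda$ ranges over a compact subset of $(0,\infty)$ fixed by $a,b,M$ is false, since $\Wt(\rho_0,\rho_1)\to0$ as $\rho_1\to\rho_0$ inside Assumption~\ref{ass:bounded}. The alternative of ``normalizing $\Wt=1$ and tracking $\ln\lambda$'' does not help either. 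The Luxemburg gauge is $1$-homogeneous, so the only relevant quantity is the scale-free ratio $\Lambda=\sup_{t,x}|v_t^2(x)|/\Wt(\rho_0,\rho_1)$ (and its analogue for the $\Wp$-geodesic in your lower bound), and $a,b,M$ do not control it.

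In fact no argument can close this step, because the lemma as stated is false. On $\mathbb{S}^1$, take the constant exponent $p\equiv 2+\eta$, which is allowed by Assumption~\ref{ass:pexp}; then $\Wp=W_{2+\eta}$. Let $\rho_0$ be uniform plus a smooth plateau bump of fixed height and width $r$, and let $\rho_1$ be the same bump shifted by $r$; both stay in a fixed interval $[a,b]$.
\begin{itemize}
\item The monotone map moves a set of measure $\sim r$ by $\sim r$ and fixes the rest, so $\Wt^2\lesssim r^3$.
\item An optimal $W_{2+\eta}$ plan on the circle is a rotated monotone rearrangement. Any rotation displaces either that set or the whole background by $\gtrsim r$, so $\Wp^{2+\eta}\gtrsim r^{3+\eta}$.
\end{itemize}
Hence $\Wp^2/\Wt^2\gtrsim r^{-\eta/(2+\eta)}\to\infty$ as $r\to0$; here $\Lambda\sim r^{-1/2}$. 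Your fallback splitting fails for the same reason. On the set where $s$ is large, $s^{2+\varepsilon_0}$ is not controlled by $L^2$-closeness plus Chebyshev; that would require higher integrability of $v/\lambda$, which is exactly what is missing.

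What your argument does prove is correct, and it takes a different route from the paper's. The paper's proof is a one-line appeal to Proposition~\ref{prop:expansion} and Lemma~\ref{lem:velocity_bounds}, and it hides the same defect. That proposition normalizes $\int|v_0|^2\rho=1$ and then quotes bounds that hold only for unnormalized velocities. It is also circular: Proposition~\ref{prop:expansion} uses Theorem~\ref{thm:stability}, whose proof goes through Lemma~\ref{lem:energy_estimate}, which in turn uses Proposition~\ref{prop:expansion}. Your lower bound inherits this circularity through Lemma~\ref{lem:perturbed_velocity_bounds}.

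Your two-sided competitor comparison needs neither first-order optimality nor a passage from the infinitesimal norm $F_{p(\cdot)}$ to $\Wp$. Your treatment of small speeds, via $s^{2-\|\varepsilon\|_\infty}|\ln s|\le C$, is right. It also correctly dispenses with the lower bound $|v|\ge c>0$ of Lemma~\ref{lem:velocity_bounds}, which no gradient field on a closed manifold can satisfy. Done carefully, your argument yields $|\Wp^2-\Wt^2|\le C\|\varepsilon\|_\infty(1+\ln\Lambda)\Wt^2$ when $\|\varepsilon\|_\infty(1+\ln\Lambda)$ is small, with $\Lambda$ the larger of the two normalized sup-speeds and $C$ absolute. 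When $\varepsilon$ has a sign, one of the two one-sided inequalities needs no speed bound at all. A constant depending only on $a$, $b$ and $M$ is not attainable.
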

	
	\begin{proof}
		This follows from Proposition \ref{prop:expansion}. By Lemma \ref{lem:velocity_bounds}, $|v_0|$ is uniformly bounded, so $\ln|v_0|$ is bounded. Thus $\int_M |v_0|^2 \varepsilon(x) \ln|v_0| \rho \, d\vol_g \leq C \|\varepsilon\|_\infty F_2^2$.
	\end{proof}
	
	\begin{lemma}[Entropy perturbation] \label{lem:entropy_comparison}
		Let $(\rho_t)$ be the $\Wp$-geodesics connecting $\rho_0$ and $\rho_1$, and $(\rho_t^2)$ 
		the $\Wt$-geodesics. Then
		\begin{equation}
			|H(\rho_t) - H(\rho_t^2)| \le C \|\varepsilon\|_\infty \Wt^2(\rho_0,\rho_1).
		\end{equation}
	\end{lemma}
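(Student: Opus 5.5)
Both geodesics have the same endpoints. The plan is to compare them at time $t$ through the entropy regularity of Lemma~\ref{lem:entropy_regularity}, but to gain a second factor of $\Wt(\rho_0,\rho_1)$ compared with Step~2 of the proof of Theorem~\ref{thm:direct1}. Lipschitz continuity of $H$ alone only gives $L\,\Wt(\rho_t,\rho_t^2)\le LC_1\|\varepsilon\|_\infty\Wt(\rho_0,\rho_1)$, which is linear in $\Wt$. The key is therefore to replace the crude Lipschitz bound by a first-order expansion of $H$ along the $\Wt$-geodesic $(\rho_t^s)_{s\in[0,1]}$ from $\rho_t^\varepsilon$ to $\rho_t^2$ built in Lemma~\ref{lem:interpolation}.

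\textbf{Step 1.} I write
\[
H(\rho_t^2)-H(\rho_t)=\int_0^1\frac{d}{ds}H(\rho_t^s)\,ds=\int_0^1\!\!\int_M \nabla\log\rho_t^s\cdot w^s\,\rho_t^s\,d\vol_g\,ds,
\]
where $w^s$ is the velocity of the interpolation. Here $w^0=\exp_x^{-1}(T_t(x))$ and $\|w^s\|_{L^2(\rho_t^s)}=\Wt(\rho_t,\rho_t^2)$. By Cauchy--Schwarz and the uniform bound $\|\nabla\log\rho_t^s\|_\infty\le C(a,b)$, used as in Lemma~\ref{lem:entropy_regularity}, this is at most $C(a,b)\,\Wt(\rho_t,\rho_t^2)$.

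\textbf{Step 2.} I sharpen Theorem~\ref{thm:stability} to $\sup_t\Wt(\rho_t,\rho_t^2)\le C\|\varepsilon\|_\infty\Wt^2(\rho_0,\rho_1)$. The idea is that the displacement between the two geodesics is driven by the velocity discrepancy $v_t^\varepsilon-v_t^2$. By the structure $v_t=\lambda_\varepsilon|\nabla\phi_t|^{q(x)-2}\nabla\phi_t$ of Theorem~\ref{thm:existence}, with $q-2=-\varepsilon/(1+\varepsilon)$, this discrepancy is pointwise of size $\|\varepsilon\|_\infty|v|\,|\ln|v||$ plus the correction $\lambda_\varepsilon-\Wt$. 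Proposition~\ref{prop:expansion} and Lemma~\ref{lem:distance_comparison} control the latter by $C\|\varepsilon\|_\infty\Wt$. Since $|v_t^2|\lesssim\Wt(\rho_0,\rho_1)$ by Lemma~\ref{lem:velocity_bounds}, the forcing term in the differential inequality of Lemma~\ref{lem:energy_estimate} becomes $\sqrt{E}\cdot C\|\varepsilon\|_\infty\Wt(\rho_0,\rho_1)$. It is crucial that this is not $\sqrt E\cdot\Wt$: only the difference of the velocities enters, not their sum. Integrating $\tfrac{d}{dt}\sqrt E\le C\|\varepsilon\|_\infty\Wt$ from $E(0)=0$ then gives the claimed bound.

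\textbf{Step 3.} Combining Steps 1 and 2 gives $|H(\rho_t)-H(\rho_t^2)|\le C(a,b)\,C\|\varepsilon\|_\infty\Wt^2(\rho_0,\rho_1)$, which is the claim.

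\textbf{Main obstacle.} I expect Step 2 to be the hard part, specifically the quadratic-in-$\Wt$ refinement. The logarithmic factor $\ln|v|$ blows up as $\Wt\to0$, so the bound above only holds with a constant depending on a lower bound for $\Wt(\rho_0,\rho_1)$. This is the same normalization used in Theorem~\ref{thm:direct1}, where $\Wp\ge c_0>0$ for the class of densities considered. I would first try to track it through the uniform velocity bounds $c\le|v_t^2|\le C$ of Lemma~\ref{lem:velocity_bounds}, and let $C$ depend on $a,b$ and the geometry of $M$ accordingly. If that fails, I would fall back on the weaker estimate $C\|\varepsilon\|_\infty\Wt(\rho_0,\rho_1)$ combined with $\Wt\ge c_0$, which yields the stated form with a constant depending on $c_0$.
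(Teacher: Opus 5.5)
Your diagnosis of the paper's argument is correct. Its proof is exactly the chain $|H(\rho_t)-H(\rho_t^2)|\le L\,\Wt(\rho_t,\rho_t^2)\le LC_1\|\varepsilon\|_\infty\Wt(\rho_0,\rho_1)$, built from Lemma~\ref{lem:entropy_regularity} and Theorem~\ref{thm:stability}. That bound is linear in $\Wt(\rho_0,\rho_1)$, so the stated $\Wt^2$ does not follow from it.

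Your repair, however, does not supply the missing factor. Step~1 is not a refinement: Cauchy--Schwarz against $\|\nabla\log\rho_t^s\|_\infty\le C(a,b)$ is literally the Lipschitz bound of Lemma~\ref{lem:entropy_regularity}. Everything therefore rests on Step~2, and Step~2 does not give what you claim. From $\frac{d}{dt}\sqrt{E}\le C\|\varepsilon\|_\infty\Wt(\rho_0,\rho_1)$ and $E(0)=0$ you obtain $\sqrt{E(t)}=\Wt(\rho_t,\rho_t^2)\le C\|\varepsilon\|_\infty\Wt(\rho_0,\rho_1)$. That is just the first inequality of Theorem~\ref{thm:stability} again: the quantity that is quadratic in $\Wt(\rho_0,\rho_1)$ is $E$ itself, not the distance. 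Inserted into Step~1, this returns $C\|\varepsilon\|_\infty\Wt(\rho_0,\rho_1)$, the same linear bound as the paper. Separately, Lemma~\ref{lem:velocity_bounds} does not give $|v_t^2|\lesssim\Wt(\rho_0,\rho_1)$: its constants depend on $a,b$, not on the distance. Its lower bound $c>0$ is also incompatible with $v_t^2=\nabla\phi_t^2$ on a closed manifold, where $\phi_t^2$ attains a maximum, so it cannot be used to control $\ln|v|$ either.

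The loss is structural, not a bookkeeping slip. The energy estimate bounds the growth of $\Wt(\rho_t,\rho_t^2)$ by the full mismatch $\|v_t^2\circ T_t-v_t^\varepsilon\|_{L^2(\rho_t^\varepsilon)}$, and that mismatch is of first order in $\Wt(\rho_0,\rho_1)$. The two optimal fields generally differ by a relative $O(\|\varepsilon\|_{C^1})$ amount, since $|\nabla\phi|^{q(x)-2}\nabla\phi$ is not a gradient when $q\not\equiv2$. Meanwhile, because both curves join $\rho_0$ to $\rho_1$, the density increments they produce agree to leading order. Cauchy--Schwarz charges all of the velocity mismatch to the separation of the two curves, including the part that is divergence-free with respect to $\rho_t$ and moves no mass. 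A genuinely quadratic bound would require showing that the density velocities $\partial_t\rho_t-\partial_t\rho_t^2$, not the vector fields, are $O(\|\varepsilon\|\,\Wt^2(\rho_0,\rho_1))$ in the tangent norm. That is a second-order comparison of the two geodesic equations with the same boundary data, which neither your proposal nor the paper provides.

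Your fallback, the linear estimate combined with $\Wt(\rho_0,\rho_1)\ge c_0$, is exactly what the paper tacitly does (compare the constant $C_*$ in the proof of Theorem~\ref{thm:direct1}). It proves the statement only under that extra, unstated hypothesis, and with a constant of order $1/c_0$. It is therefore useless in the one place the lemma is applied, Lemma~\ref{lem:perturbed_local_expansion}, where $\Wt(\rho_0^\delta,\rho_1^\delta)\approx\delta\to0$.
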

	
	\begin{proof}
		By Theorem \ref{thm:stability}, $\Wt(\rho_t, \rho_t^2) \le C_1 \|\varepsilon\|_\infty \Wt(\rho_0,\rho_1)$. By Lemma \ref{lem:entropy_regularity}, $|H(\rho_t) - H(\rho_t^2)| \le L \Wt(\rho_t, \rho_t^2)$. The result follows.
	\end{proof}
	
	\begin{lemma}[Perturbation of local expansion] \label{lem:perturbed_local_expansion}
		Let $\rho_0^\delta, \rho_1^\delta$ be the densities defined in Lemma \ref{lem:entropy_expansion}. Let $(\rho_t^{\varepsilon,\delta})$ be the $\Wp$-geodesics connecting them. Then
		\begin{equation}
			H(\rho_t^{\varepsilon,\delta}) = H(\rho_0^\delta) + t(H(\rho_1^\delta) - H(\rho_0^\delta)) - \frac{t(1-t)}{2} \left( \Ric_x(v,v) + O(\|\varepsilon\|_\infty) \right) \delta^2 + o(\delta^2).
		\end{equation}
	\end{lemma}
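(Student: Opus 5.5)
The plan is to compare the $\Wp$-geodesic $(\rho_t^{\varepsilon,\delta})$ with the $\Wt$-geodesic $(\rho_t^\delta)$ between the same endpoints $\rho_0^\delta,\rho_1^\delta$. Lemma \ref{lem:entropy_expansion} already gives the unperturbed expansion with the exact coefficient $\Ric_x(v,v)$. The whole task is then to show that replacing $\rho_t^\delta$ by $\rho_t^{\varepsilon,\delta}$ changes the entropy by at most $O(\|\varepsilon\|_\infty)\,t(1-t)\delta^2+o(\delta^2)$. Since the indicator densities do not satisfy Assumption \ref{ass:bounded}, I will first work with the smooth approximations $\rho_0^{\delta,n},\rho_1^{\delta,n}$ of Lemma \ref{lem:density_approx}. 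I will prove every estimate with constants uniform in $n$ and pass to the limit $n\to\infty$ at the end, using $L^1$ convergence of the endpoints, convergence of their entropies, and $\Wt(\rho_0^{\delta,n},\rho_1^{\delta,n})\to\delta$.

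\textbf{Step 1 (entropy transfer).} For the approximating endpoints, Lemma \ref{lem:entropy_comparison} gives
\[
|H(\rho_t^{\varepsilon,\delta,n})-H(\rho_t^{\delta,n})|\le C\|\varepsilon\|_\infty \Wt^2(\rho_0^{\delta,n},\rho_1^{\delta,n}).
\]
The right-hand side tends to $C\|\varepsilon\|_\infty\delta^2$. This is already of the right order $\|\varepsilon\|_\infty\delta^2$, but it does not yet carry the factor $t(1-t)$.

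\textbf{Step 2 (recovering $t(1-t)$).} Both geodesics share endpoints, so the difference $D(t)=H(\rho_t^{\varepsilon,\delta})-H(\rho_t^{\delta})$ vanishes at $t=0$ and $t=1$. I would then bound $D''(t)$ by $C\|\varepsilon\|_\infty\delta^2$. For this I would use the second variation formula along each curve: the Bochner-type integrand $\int(\Ric(v,v)+|\nabla v|^2)\rho$ for the $\Wt$-geodesic, and its analogue for the $\Wp$-geodesic, whose velocity $v_t=\lambda_\varepsilon|\nabla\phi_t|^{q-2}\nabla\phi_t$ comes from Theorem \ref{thm:existence}. The extra terms produced by $\nabla\varepsilon$ and $\log|\nabla\phi_t|$ are $O(\|\varepsilon\|_{C^1})$ times the kinetic energy, which is $O(\delta^2)$. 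The difference of the principal parts is controlled by the velocity closeness in Theorem \ref{thm:stability}. A function vanishing at both endpoints with $|D''|\le A$ satisfies $|D(t)|\le \frac A2 t(1-t)$, which gives the claimed form $-\frac{t(1-t)}{2}O(\|\varepsilon\|_\infty)\delta^2$. If the second-derivative route proves too costly, the fallback is to accept the weaker bound of Step 1 and absorb it into the $O(\|\varepsilon\|_\infty)$ coefficient for $t$ away from $0$ and $1$. Near the endpoints I would instead use Lemma \ref{lem:entropy_regularity} together with $\sup_t\Wt(\rho_t^{\varepsilon,\delta},\rho_t^\delta)\lesssim\|\varepsilon\|_\infty\delta$.

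\textbf{Step 3 (assembly).} I add $D(t)$ to the expansion of Lemma \ref{lem:entropy_expansion}. If the intended normalization is $\Wp^2$ rather than $\delta^2$, I convert between them using Lemma \ref{lem:distance_comparison}, which costs a further factor $1+O(\|\varepsilon\|_\infty)$ multiplying $\delta^2$.

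\textbf{Main obstacle.} The hardest point is uniformity in $\delta$ and $n$. The constants in Lemmas \ref{lem:velocity_bounds}, \ref{lem:entropy_regularity} and \ref{lem:entropy_comparison} depend on the density bounds $a,b$, and for concentrated indicator densities these degenerate like $\vol(B(x,\delta))^{-1}$. The plan for this is to rescale. I would pass to normal coordinates at $x$, dilate by $\delta^{-1}$, and observe that the rescaled densities have $\delta$-independent bounds. The rescaled metric converges to the Euclidean one with curvature corrections of order $\delta^2$, and the rescaled exponent satisfies $\|\varepsilon(\exp_x(\delta\cdot))\|_\infty\le\|\varepsilon\|_\infty$. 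Entropy changes under the dilation only by the additive constant $-n\log\delta$, which cancels in the expansion. I would apply the lemmas in the rescaled picture, where the constants are uniform, and then scale back. One subtlety is that the $|v|^{p(x)}$ cost is not homogeneous under dilation: it picks up a factor $\delta^{\varepsilon(x)}$, which introduces a $\log\delta$ term of relative size $\|\varepsilon\|_\infty|\log\delta|$. Controlling this term, presumably by requiring $\|\varepsilon\|_\infty|\log\delta|\to0$, is where I expect the genuine difficulty to lie.
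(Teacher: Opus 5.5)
Your Steps~1 and~3 are exactly the paper's proof. The paper quotes Lemma~\ref{lem:entropy_comparison} directly for the indicator densities to get $H(\rho_t^{\varepsilon,\delta})=H(\rho_t^\delta)+O(\|\varepsilon\|_\infty\delta^2)$, then adds Lemma~\ref{lem:entropy_expansion}. It does not bother with the factor $t(1-t)$ (only $t=1/2$ is used in Theorem~\ref{thm:reciproque1}, so it can be absorbed), and it never addresses the uniformity you raise.

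You are right that uniformity is the crux. However, your proposal does not resolve it, and it cannot be resolved with the tools you cite, so Step~1 is a genuine gap.
\begin{itemize}
\item The proof of Lemma~\ref{lem:entropy_comparison} (Theorem~\ref{thm:stability} plus Lemma~\ref{lem:entropy_regularity}) only yields $LC_1\|\varepsilon\|_\infty\Wt(\rho_0,\rho_1)$. This is linear in $\Wt$, i.e.\ $O(\|\varepsilon\|_\infty\delta)$ here. It is also exactly what your endpoint fallback gives, and it swamps the $\delta^2$ curvature term.
\item Your rescaling makes this sharper rather than curing it. Once all constants are $\delta$-independent and entropy differences are dilation-invariant, the same chain gives only $|H(\rho_t^{\varepsilon,\delta})-H(\rho_t^\delta)|\lesssim\|\varepsilon\|_\infty$, with no power of $\delta$ at all.
\item Uniformity in $n$ fails regardless. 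The mollified indicators are not bounded below, and $\|\nabla\log\rho^{\delta,n}_0\|_\infty\to\infty$, which is precisely what the proof of Lemma~\ref{lem:entropy_regularity} uses.
\end{itemize}
No Lipschitz bound on $H$ combined with a sup-norm stability estimate can reach order $\delta^2$. In Step~2, likewise, the $L^2$ velocity closeness of Theorem~\ref{thm:stability} does not control the difference of the $\int|\nabla v|^2\rho$ terms.

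The missing idea is not the one you anticipate. There is no $\log\delta$: the Luxemburg gauge is $1$-homogeneous, so $v=\delta w$, $\lambda=\delta\tilde\lambda$ leaves $\int_0^1\!\int_M|v/\lambda|^{p(x)}\rho\le 1$ exactly invariant. In any case a condition $\|\varepsilon\|_\infty|\log\delta|\to0$ would be useless in Theorem~\ref{thm:reciproque1}, where $\varepsilon$ is fixed and $\delta\to0$.

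What the dilation really shows is that the constant part of $\varepsilon$ is invisible at scale $\delta$. For constant $p$ the problem is minimizing $\int\!\!\int|v|^p\rho$, and in the flat limit translating a ball is optimal for every convex cost. So only $\operatorname{osc}_{B(x,2\delta)}\varepsilon=O(\delta\|\nabla\varepsilon\|_\infty)$ deforms the geodesic. You must then show that the first-order deformation is entropy-neutral and compute the second-order term; this is what your Step~2 should actually do.

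For the test densities ($|v|\approx\lambda$), conservation of $(p-1)|v/\lambda|^{p}$ along characteristics gives the acceleration $a\approx-\tfrac12(\nabla\varepsilon\cdot v)v$. The second variation of $H$ therefore acquires $-\int\rho\,\mathrm{div}\,a\approx\tfrac12\int\nabla^2\varepsilon(v,v)\rho$, not merely $O(\|\varepsilon\|_{C^1})$ times the kinetic energy. In the flat one-dimensional model one finds, to first order in $\varepsilon$,
\[
H(\rho_t^{\varepsilon,\delta})-H(\rho_t^\delta)=-\frac{t(1-t)}{4}\,\varepsilon''(x)\,\delta^2+o(\delta^2).
\]
Taking $\varepsilon=\eta\cos(k\theta)$ on $\mathbb{S}^1$ with $\eta k^2=\varepsilon_0$ and $\eta\to0$ shows this is not $O(\|\varepsilon\|_\infty)\delta^2$. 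So no argument using only sup-norm smallness, yours or the paper's, can give the lemma as stated. The attainable statement has $O(\|\varepsilon\|_{C^2})$ in place of $O(\|\varepsilon\|_\infty)$.
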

	
	\begin{proof}
		By Lemma \ref{lem:entropy_comparison}, $H(\rho_t^{\varepsilon,\delta}) = H(\rho_t^\delta) + O(\|\varepsilon\|_\infty \delta^2)$. Using Lemma \ref{lem:entropy_expansion} for $H(\rho_t^\delta)$ yields the result.
	\end{proof}
	
	\begin{theorem}[Converse] \label{thm:reciproque1}
		Let $(M,g)$ be a compact Riemannian manifold. Suppose there exists $K \in \mathbb{R}$ such that for any $\Wp$-geodesics $(\rho_t)$ connecting densities satisfying Assumption \ref{ass:bounded},
		\begin{equation}
			H(\rho_t) \le (1-t)H(\rho_0) + t H(\rho_1) - \frac{K}{2} t(1-t) \Wp^2(\rho_0,\rho_1) + o(\Wp^2) \quad \text{as } \Wp \to 0,
			\tag{H}
		\end{equation}
		where the $o(\Wp^2)$ term is uniform. Then $\Ric_g \ge K - C \|\varepsilon\|_\infty$.
	\end{theorem}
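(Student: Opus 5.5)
We argue by contradiction, combining (H) with the concentrated test densities of Lemma~\ref{lem:entropy_expansion} and its perturbed version Lemma~\ref{lem:perturbed_local_expansion}. Fix $x\in M$ and a unit vector $v\in T_xM$. Suppose that $\Ric_x(v,v) < K - C\|\varepsilon\|_\infty$, where $C$ is chosen larger than the constant in the $O(\|\varepsilon\|_\infty)$ term of Lemma~\ref{lem:perturbed_local_expansion} plus the constant of Lemma~\ref{lem:distance_comparison}. For small $\delta>0$ we take $\rho_0^\delta,\rho_1^\delta$ as in Lemma~\ref{lem:entropy_expansion}. These are not smooth and not bounded below on all of $M$, so we first pass to the regularized densities $\rho_0^{\delta,n},\rho_1^{\delta,n}$ of Lemma~\ref{lem:density_approx}, to which hypothesis (H) applies.

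\textbf{Step 1 (upper bound from (H)).} Let $(\rho_t^{\varepsilon,\delta,n})$ be the $\Wp$-geodesic between the regularized endpoints. Hypothesis (H) gives
\[
H(\rho_t^{\varepsilon,\delta,n}) \le (1-t)H(\rho_0^{\delta,n})+tH(\rho_1^{\delta,n}) - \tfrac{K}{2}t(1-t)\Wp^2(\rho_0^{\delta,n},\rho_1^{\delta,n}) + o(\Wp^2).
\]
By Lemma~\ref{lem:distance_comparison}, $\Wp^2 = \Wt^2(1+O(\|\varepsilon\|_\infty))$. By Lemma~\ref{lem:density_approx}, $\Wt(\rho_0^{\delta,n},\rho_1^{\delta,n})\to\delta$ and the endpoint entropies converge as $n\to\infty$. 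Hence the right-hand side tends to $(1-t)H(\rho_0^\delta)+tH(\rho_1^\delta)-\frac K2 t(1-t)(1+O(\|\varepsilon\|_\infty))\delta^2+o(\delta^2)$. Uniformity of the $o(\Wp^2)$ term is what lets it survive the limit $n\to\infty$.

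\textbf{Step 2 (lower bound from the local expansion).} Lemma~\ref{lem:perturbed_local_expansion} yields
\[
H(\rho_t^{\varepsilon,\delta}) = (1-t)H(\rho_0^\delta)+tH(\rho_1^\delta) - \tfrac{t(1-t)}{2}\bigl(\Ric_x(v,v)+O(\|\varepsilon\|_\infty)\bigr)\delta^2 + o(\delta^2).
\]
To use it we need $H(\rho_t^{\varepsilon,\delta,n})\to H(\rho_t^{\varepsilon,\delta})$, or at least $\liminf_n H(\rho_t^{\varepsilon,\delta,n})\ge H(\rho_t^{\varepsilon,\delta})$. We plan to obtain this from lower semicontinuity of $H$ under weak convergence, together with stability of $\Wp$-geodesics under $L^1$ convergence of the endpoints. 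That stability follows from the compactness lemma for admissible pairs and the uniqueness in Theorem~\ref{thm:existence}. Only the $\liminf$ inequality is needed, which is exactly the direction the argument uses.

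\textbf{Step 3 (conclusion).} We combine the two bounds at $t=1/2$, divide by $t(1-t)\delta^2/2$, and let $\delta\to0$. This gives $\Ric_x(v,v)+O(\|\varepsilon\|_\infty)\ge K(1+O(\|\varepsilon\|_\infty))$. Since $M$ is compact, $|K|$ is fixed and the $O(\cdot)$ constants are uniform in $(x,v)$, so we obtain $\Ric_x(v,v)\ge K-C\|\varepsilon\|_\infty$, contradicting the assumption. Since $x$ and $v$ were arbitrary, the theorem follows.

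The main obstacle is the interplay of limits. Lemmas~\ref{lem:entropy_comparison} and \ref{lem:perturbed_local_expansion} depend, through Lemma~\ref{lem:entropy_regularity} and Theorem~\ref{thm:stability}, on constants $a,b$ that degenerate for the concentrated densities, since $b\sim\delta^{-n}$. The error $O(\|\varepsilon\|_\infty\delta^2)$ must therefore be checked to hold with constants independent of $\delta$. I would first try a rescaling argument: in normal coordinates at $x$, dilate by $\delta^{-1}$, so the rescaled densities have bounds independent of $\delta$. I would then verify that the constants in Theorem~\ref{thm:stability} and Lemma~\ref{lem:entropy_regularity} scale as $\delta^2$ under this dilation. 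If that fails, I would replace the indicator test densities by smooth bumps with a fixed profile, so that $\ln|v_0|$ stays controlled uniformly in $\delta$.
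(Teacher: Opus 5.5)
Your outline is the paper's own proof: balls $\rho_0^\delta,\rho_1^\delta$, Lemma~\ref{lem:distance_comparison} for $\Wp^2\approx\delta^2$, Lemma~\ref{lem:perturbed_local_expansion} at $t=1/2$, comparison with (H), and division by $\delta^2$. You add a regularization layer through Lemma~\ref{lem:density_approx}, and its $\liminf$ direction is the right one. The point you defer to your last paragraph, however, is a genuine gap, which the paper's proof shares, and the rescaling you propose would expose it rather than close it.

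\textbf{Step~2 only yields an $O(\|\varepsilon\|_\infty)$ error.} Step~2 needs $|H(\rho_t^{\varepsilon,\delta})-H(\rho_t^{\delta})|=O(\|\varepsilon\|_\infty\delta^2)$ with a $\delta$-independent constant. The mechanism behind Lemma~\ref{lem:perturbed_local_expansion} is Lemma~\ref{lem:entropy_comparison}, which is only a \emph{first-order} Lipschitz transfer. Theorem~\ref{thm:stability} gives at best $\Wt(\rho_t^{\varepsilon,\delta},\rho_t^\delta)\le C_1\|\varepsilon\|_\infty\,\delta$. Lemma~\ref{lem:entropy_regularity} converts this into an entropy bound with a constant $L$ that is essentially $\sqrt{\I}$ of the intermediate densities. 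That is of order $\delta^{-1}$ at scale $\delta$, and infinite for indicators. The product is therefore $O(\|\varepsilon\|_\infty)$, and after division by $\delta^2/8$ it blows up as $\delta\to0$.

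\textbf{Your dilation makes this transparent.} Under $y\mapsto y/\delta$, entropy differences are invariant, while $\Wt$ and $\Wp$ are both divided by $\delta$: the Luxemburg integrand $|v/\lambda|^{p(x)}$ is invariant under $(v,\lambda)\mapsto(v/\delta,\lambda/\delta)$. So the unit-scale estimate is $|H(\rho_t^{\varepsilon,\delta})-H(\rho_t^\delta)|\le\tilde L C_1\|\varepsilon\|_\infty$, with no factor $\delta^2$. Note also that Lemma~\ref{lem:entropy_comparison} is stated with $C\|\varepsilon\|_\infty\Wt^2$, but its proof only yields $LC_1\|\varepsilon\|_\infty\Wt$. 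Smooth bumps of fixed profile do not help. In any case, the lower bound $a$ of Assumption~\ref{ass:bounded} cannot be $\delta$-uniform for densities concentrated on a $\delta$-ball.

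\textbf{What is missing.} You need a genuinely second-order comparison. Differentiate $H$ twice along the $\Wp$-geodesic itself (a Boltzmann analogue of Proposition~\ref{prop:hessian}), then run the Taylor argument around $t=1/2$ from Lemma~\ref{lem:Renyi_Wp_expansion}. Expect this to produce derivatives of $\varepsilon$. Heuristically, for a near-translation with velocity $\delta v$, each particle re-times its motion according to $\nabla\varepsilon\cdot v$ along its path. The variation of this re-timing across the ball changes the Jacobian by a relative amount proportional to $\nabla^2\varepsilon(v,v)\,\delta^2 t(1-t)$. This is the same source, the $x$-dependence of the Hamiltonian, that the paper invokes for the term $B(t)$ in Proposition~\ref{prop:hessian}. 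A correct version of this route therefore naturally yields $\Ric_g\ge K-C\|\varepsilon\|_{C^2}$, and you should not expect the error to be controlled by $\|\varepsilon\|_\infty$ alone.

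\textbf{On the distance side.} Your dilation is the right tool: it gives $\Wp^2=\delta^2(1+o(1))$, which is what Step~3 needs. But this contradicts the paper's Lemma~\ref{lem:Wp_test}, which claims a divergent term $\delta^2\bar\varepsilon(x,\delta)\ln\delta$ for exactly these test densities. You cannot use Lemma~\ref{lem:distance_comparison} uniformly in $\delta$ and Lemma~\ref{lem:Wp_test} at the same time.
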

	
	\begin{proof}
		Fix $x \in M$ and $v \in T_xM$ with $|v|_g = 1$. For small $\delta > 0$, define $\rho_0^\delta, \rho_1^\delta$ as in Lemma \ref{lem:entropy_expansion}. Let $(\rho_t^{\varepsilon,\delta})$ be the $\Wp$-geodesic connecting them.
		
		By Lemma \ref{lem:distance_comparison}, $\Wp^2 = \delta^2 + O(\|\varepsilon\|_\infty \delta^2) + o(\delta^2)$. By Lemma \ref{lem:perturbed_local_expansion},
		\begin{equation}
			H(\rho_{1/2}^{\varepsilon,\delta}) = \frac12 H(\rho_0^\delta) + \frac12 H(\rho_1^\delta) - \frac{1}{8} \left( \Ric_x(v,v) + O(\|\varepsilon\|_\infty) \right) \delta^2 + o(\delta^2).
		\end{equation}
		Hypothesis (H) gives
		\begin{equation}
			H(\rho_{1/2}^{\varepsilon,\delta}) \le \frac12 H(\rho_0^\delta) + \frac12 H(\rho_1^\delta) - \frac{K}{8} \Wp^2 + o(\Wp^2).
		\end{equation}
		Substituting and simplifying yields $\Ric_x(v,v) \ge K - C \|\varepsilon\|_\infty$.
	\end{proof}
	
	\subsection{Connection with curvature-dimension conditions}
	
	\begin{remark}[Link with $\CD(K,N)$]
		The curvature-dimension condition $\CD(K,N)$ \cite{Sturm2006a, Sturm2006b} is a synthetic notion of lower Ricci curvature bound and upper dimension bound. For a metric measure space $(X,d,\mathfrak{m})$, $\CD(K,N)$ implies that the Boltzmann entropy $H$ is $K$-convex along 
		$\Wt$-geodesics, provided the space is essentially non-branching.
		
		In our setting, the variable exponent Wasserstein space $(\mathcal{P}(M), \Wp, \vol_g)$ is a metric measure space. Theorem \ref{thm:converse} shows that if $\Ric_g \ge K$, then $H$ is $(K - C\|\varepsilon\|_\infty)$-convex along $\Wp$-geodesics. This suggests that the effective curvature bound in the variable exponent space is $K - C\|\varepsilon\|_\infty$.
		
		Conversely, Theorem \ref{thm:reciproque1} shows that if $H$ is $K$-convex along $\Wp$-geodesics, then $\Ric_g \ge K - C\|\varepsilon\|_\infty$. This establishes an equivalence similar to the Lott-Villani theorem, up to a small perturbation proportional to $\|\varepsilon\|_\infty$.
		
		These results indicate that the $\CD(K,N)$ condition is robust under perturbations of the transport metric, at least for small perturbations.
	\end{remark}

	\section{Modified R\'enyi entropy and main equivalence}\label{s6}
	
	\subsection{Definition and motivation}
	
	The Boltzmann entropy fails to capture the exact Ricci lower bound because the expansion of $\Wp^2$ contains a logarithmic term $\bar\varepsilon(x,\delta)\ln\delta$ that diverges as $\delta \to 0$ whenever $\varepsilon(x) \neq 0$.
	
	\begin{definition}[Modified R\'enyi entropy]
		For $\rho \in \mathcal{P}(M)$ with $\rho > 0$, define
		\[
		\Renyi(\rho) = -n\log\int_M \rho(x)^{1-\frac{1}{n}} e^{\frac{n}{2}\varepsilon(x)}\,d\vol_g.
		\]
		When $\varepsilon \equiv 0$, this reduces to the classical R\'enyi entropy of order $1-1/n$.
	\end{definition}
	
	\begin{remark}[Connection with Rényi entropy and Bakry-Émery geometry]
		\label{rem:Renyi_BE}
		The classical Rényi entropy of order $\alpha\in(0,1)$ with respect to a reference
		measure $\nu$ is $R_\alpha(\rho\,|\,\nu) = \frac{1}{1-\alpha}\log\int_M \rho^\alpha\,d\nu$.
		The modified entropy $\mathcal{R}_\varepsilon$ corresponds to taking $\alpha = 1-\frac{1}{n}$
		(the exponent used in Sturm's $\CD(K,N)$ theory \cite{Sturm2006a}) with the
		\emph{modified reference measure}
		\[
		d\nu_\varepsilon := e^{\frac{n}{2}\varepsilon}\,d\vol_g.
		\]
		Indeed, a short computation gives
		$\mathcal{R}_\varepsilon(\rho) = -n\log\int_M \rho^{1-1/n}\,d\nu_\varepsilon
		= -\,R_{1-1/n}(\rho\,|\,\nu_\varepsilon) \cdot \text{const}$.
		
		From a Bakry-Émery viewpoint, $d\nu_\varepsilon = e^{-V}\,d\vol_g$ with
		$V = -\frac{n}{2}\varepsilon$, so $\nabla^2 V = -\frac{n}{2}\nabla^2\varepsilon$.
		The Bakry-Émery Ricci tensor is $\Ric_V = \Ric_g - \frac{n}{2}\nabla^2\varepsilon$,
		which differs from the effective tensor $\Ric_g + \nabla^2\varepsilon$ appearing
		in our main theorem by a sign and a factor $\frac{n}{2}$.
		The discrepancy is resolved by the dimension factor in the normalization:
		the $1/n$ in $\rho^{1-1/n}$ introduces an effective re-weighting that converts
		the $-\frac{n}{2}\nabla^2\varepsilon$ of Bakry-Émery into $+\nabla^2\varepsilon$ in
		Theorem~\ref{thm:main}.  This is consistent with the known fact that in the
		$N$-dimensional Bakry-Émery setting, the Rényi entropy of order $1-1/N$ yields
		the curvature $\Ric_V$ rather than $\Ric_g$ alone.
	\end{remark}

	\subsection{Expansion for test densities}
	
	Fix $x \in M$ and a unit tangent vector $v \in T_xM$. For small $\delta > 0$, define
	\[
	\rho_0^\delta = \frac{\mathbf{1}_{B(x,\delta)}}{\vol_g(B(x,\delta))}, \qquad
	\rho_1^\delta = \frac{\mathbf{1}_{B(\exp_x(\delta v),\delta)}}{\vol_g(B(\exp_x(\delta v),\delta))}.
	\]
	
	For any function $f$, denote its average over $B(x,\delta)$ by
	\[
	\bar f(x,\delta) = \frac{1}{\vol_g(B(x,\delta))} \int_{B(x,\delta)} f(y)\, d\vol_g(y).
	\]
	
	\begin{lemma}[Expansion of $\Renyi$ on small balls] \label{lem:Renyi_ball}
		\[
		\Renyi(\rho_0^\delta) = -n\ln\delta - \log c_n - \frac{n^2}{2}\bar\varepsilon(x,\delta) + o(1).
		\]
	\end{lemma}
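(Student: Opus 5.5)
The plan is to evaluate $\Renyi(\rho_0^\delta)$ exactly, since $\rho_0^\delta$ is constant on its support, and then expand the two resulting factors as $\delta\to0$. I take $c_n$ to be the volume of the Euclidean unit ball in $\mathbb{R}^n$. The statement is understood for $x$ fixed and $\delta\to 0$, with $\varepsilon$ fixed.

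\textbf{Step 1: exact evaluation.} Write $V_\delta:=\vol_g(B(x,\delta))$. Since $\rho_0^\delta=V_\delta^{-1}$ on $B(x,\delta)$ and vanishes elsewhere,
\[
\int_M (\rho_0^\delta)^{1-\frac1n}e^{\frac n2\varepsilon}\,d\vol_g
= V_\delta^{-1+\frac1n}\int_{B(x,\delta)}e^{\frac n2\varepsilon}\,d\vol_g
= V_\delta^{\frac1n}\,\overline{e^{\frac n2\varepsilon}}(x,\delta).
\]
Taking $-n\log$ of both sides gives the exact identity
\[
\Renyi(\rho_0^\delta)=-\log V_\delta-n\log\overline{e^{\frac n2\varepsilon}}(x,\delta).
\]

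\textbf{Step 2: volume term.} In normal coordinates at $x$ (valid for $\delta<\operatorname{inj}(M)$), the standard small-ball expansion is $V_\delta=c_n\delta^n\bigl(1+O(\delta^2)\bigr)$. Its leading correction is $-\frac{\mathrm{Scal}(x)}{6(n+2)}\delta^2$, and compactness of $M$ makes the $O(\delta^2)$ uniform in $x$. Hence
\[
-\log V_\delta=-n\ln\delta-\log c_n+O(\delta^2).
\]

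\textbf{Step 3: exponential average.} Here we must replace $\log$ of the average of $e^{\frac n2\varepsilon}$ by $\frac n2$ times the average of $\varepsilon$. Since $\varepsilon\in C^2(M)$, we have $\sup_{B(x,\delta)}|\varepsilon-\varepsilon(x)|\le\|\nabla\varepsilon\|_\infty\delta$. It follows that
\[
\overline{e^{\frac n2\varepsilon}}(x,\delta)=e^{\frac n2\varepsilon(x)}\bigl(1+O(\delta)\bigr)
\quad\text{and}\quad
\bar\varepsilon(x,\delta)=\varepsilon(x)+O(\delta).
\]
Therefore
\[
n\log\overline{e^{\frac n2\varepsilon}}(x,\delta)=\frac{n^2}{2}\varepsilon(x)+O(\delta)=\frac{n^2}{2}\bar\varepsilon(x,\delta)+O(\delta).
\]
Alternatively, a Taylor expansion of $e^{\frac n2\varepsilon}$ about $\varepsilon(x)$ gives the sharper error $O(\delta^2)$, since the first-order term averages to $\frac n2(\bar\varepsilon-\varepsilon(x))$. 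Either error is $o(1)$, which is all the statement requires. Combining Steps 1–3 yields the claimed expansion.

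\textbf{Main obstacle.} I expect the only delicate point to be bookkeeping rather than analysis. One must confirm that the $o(1)$ is meant as $\delta\to0$, and that it is not a smallness condition on $\|\varepsilon\|_\infty$: no smallness of $\varepsilon$ is needed, only its continuity at $x$. One must also confirm that $c_n$ is the Euclidean unit-ball volume, so that the constant $-\log c_n$ comes out with coefficient one.
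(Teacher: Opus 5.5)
Your proof is correct. Steps 1 and 2 coincide with the paper's argument. The paper also uses that $\rho_0^\delta$ is constant on the ball, which gives $\vol_g(B(x,\delta))^{1/n}$ times the average of $e^{\frac n2\varepsilon}$. It likewise uses $\vol_g(B(x,\delta))^{1/n}=c_n^{1/n}\delta(1+o(1))$, with $c_n$ the Euclidean unit-ball volume, as you assumed.

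The routes differ in Step 3. The paper linearizes in the \emph{amplitude} of $\varepsilon$: it writes $\int_{B(x,\delta)}e^{\frac n2\varepsilon}\,d\vol_g=\vol_g(B(x,\delta))\bigl(1+\frac n2\bar\varepsilon(x,\delta)+O(\|\varepsilon\|_\infty^2)\bigr)$ and then takes $-n\log$. Its remainder is therefore $O(\|\varepsilon\|_\infty^2)+o(1)$. The first piece does not vanish as $\delta\to0$; it is absorbed into ``$o(1)$'' only under the paper's standing convention that $\varepsilon$ is small.

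You instead localize in $\delta$. Using the Lipschitz (indeed merely continuous) behaviour of $\varepsilon$ on $B(x,\delta)$, you compare $n\log\overline{e^{\frac n2\varepsilon}}(x,\delta)$ with $\frac{n^2}{2}\bar\varepsilon(x,\delta)$ directly. The error is $O(\delta)$, or $O(\delta^2)$ with your Taylor refinement, uniformly in $x$ by compactness, and no smallness of $\varepsilon$ is needed.

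What each route buys: the paper's needs only boundedness of $\varepsilon$, but leaves a residual $O(\|\varepsilon\|_\infty^2)$ term. Yours uses regularity of $\varepsilon$, and in exchange proves the strictly stronger reading in which $o(1)$ genuinely means $\delta\to0$ at fixed $\varepsilon$. Your closing remark correctly identifies exactly this distinction.
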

	
	\begin{proof}
		Since $\rho_0^\delta$ is constant on $B(x,\delta)$,
		\[
		\rho_0^\delta(y)^{1-\frac{1}{n}} = \vol_g(B(x,\delta))^{-\frac{n-1}{n}}.
		\]
		Hence
		\[
		\int \rho_0^{1-\frac{1}{n}} e^{\frac{n}{2}\varepsilon} = \vol_g(B(x,\delta))^{-\frac{n-1}{n}} \int_{B(x,\delta)} e^{\frac{n}{2}\varepsilon(y)} d\vol_g(y).
		\]
		
		Expand the exponential:
		\[
		\int_{B(x,\delta)} e^{\frac{n}{2}\varepsilon(y)} d\vol_g(y) = \vol_g(B(x,\delta)) \left(1 + \frac{n}{2}\bar\varepsilon(x,\delta) + O(\|\varepsilon\|_\infty^2)\right).
		\]
		
		Thus the integral equals $\vol_g(B(x,\delta))^{\frac{1}{n}} \left(1 + \frac{n}{2}\bar\varepsilon(x,\delta) + O(\|\varepsilon\|_\infty^2)\right)$.
		
		Using $\vol_g(B(x,\delta))^{\frac{1}{n}} = c_n^{1/n} \delta (1+o(1))$, taking the logarithm and multiplying by $-n$ gives the result.
	\end{proof}
	
	The same computation for $\rho_1^\delta$ yields
	\[
	\Renyi(\rho_1^\delta) = -n\ln\delta - \log c_n - \frac{n^2}{2}\bar\varepsilon(\exp_x(\delta v),\delta) + o(1).
	\]
	
	\subsection{Expansion of $\Wp^2$ for test densities}
	
	\begin{lemma}[Expansion of $\Wp^2$ for test densities] \label{lem:Wp_test}
		For the test densities $\rho_0^\delta,\rho_1^\delta$,
		\[
		\Wp^2(\rho_0^\delta,\rho_1^\delta) = \delta^2 + \delta^2 \bar\varepsilon(x,\delta)\ln\delta + o(\delta^2),
		\]
		uniformly in $x$ and $v$.
	\end{lemma}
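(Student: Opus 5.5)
The plan is to reduce the statement to the infinitesimal expansion of Proposition~\ref{prop:expansion}, applied along a nearly-rigid translation of the small ball. The logarithm is then read off from the magnitude of the velocity field.

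\textbf{Step 1: reference quadratic geodesic.} For the test pair $\rho_0^\delta,\rho_1^\delta$, I would first describe the $\Wb$-geodesic. To leading order in normal coordinates at $x$, it is the translation of $B(x,\delta)$ along $v$ over distance $\delta$. The optimal velocity is therefore $v_0(y)=\delta\,\tilde v(y)+O(\delta^2)$, where $\tilde v$ is the parallel extension of $v$. In particular:
\[
|v_0|=\delta(1+O(\delta)) \text{ on the support,}\qquad \Wb^2(\rho_0^\delta,\rho_1^\delta)=\delta^2+O(\delta^3).
\]
Curvature only enters at order $\delta^2$ relative to the leading term, which is consistent with Lemma~\ref{lem:entropy_expansion}.

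\textbf{Step 2: infinitesimal expansion at each time.} Next I would apply Proposition~\ref{prop:expansion} at each time $t$, with $\rho=\rho_t^{2,\delta}$ and $\mu=\partial_t\rho_t^{2,\delta}$. This uses the velocity bounds $c\delta\le|v_0|\le C\delta$, i.e.\ Lemma~\ref{lem:velocity_bounds} after rescaling. The Proposition then gives
\[
F_{p(\cdot)}^2=F_2^2+\int|v_0|^2\varepsilon\ln|v_0|\,\rho\,d\vol_g+O(\|\varepsilon\|_\infty^2\delta^2).
\]
Substituting $|v_0|=\delta(1+O(\delta))$ yields
\[
\int|v_0|^2\varepsilon\ln|v_0|\,\rho=\delta^2\ln\delta\int\varepsilon\,\rho_t^{2,\delta}+O(\delta^3|\ln\delta|).
\]
Since $\rho_t^{2,\delta}$ is essentially uniform on a ball of radius $\delta$ centered within distance $\delta$ of $x$, and $\varepsilon\in C^2$, the average satisfies
\[
\int\varepsilon\,\rho_t^{2,\delta}=\bar\varepsilon(x,\delta)+O(\|\varepsilon\|_{C^1}\delta).
\]
The resulting error $\delta^3\ln\delta$ is $o(\delta^2)$.

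\textbf{Step 3: integrate in time and compare distances.} I would then integrate the squared Finsler speed over $t\in[0,1]$ to get an upper bound for $\Wp^2$. This uses the dynamic (Luxemburg) formulation, since any admissible curve bounds $\Wp$. For the lower bound, I would use Theorem~\ref{thm:stability}: the true $\Wp$-geodesic is $O(\|\varepsilon\|_\infty\delta)$-close to the quadratic one, so the first-variation term vanishes by the optimality argument of Step~5 in Proposition~\ref{prop:expansion}. The matching lower bound then holds up to $O(\|\varepsilon\|^2\delta^2)$. Uniformity in $(x,v)$ follows from compactness of the unit sphere bundle and the uniform $C^2$ control of $\varepsilon$.

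\textbf{Main obstacle.} The hard part is the scaling of the error terms. As stated, Proposition~\ref{prop:expansion} normalizes $F_2=1$, and its $\ln|v_0|$ bound uses $\delta$-independent constants. Here, however, $|v_0|\sim\delta\to0$, so $\ln|v_0|$ is unbounded. I would redo Step~3 of that proof without normalization, expanding $|v/\lambda|^{p(x)}$ around $\lambda\approx\delta$. I would then check that the quadratic remainder is $O(\|\varepsilon\|^2\delta^2\ln^2\delta)$. This term is not $o(\delta^2)$ uniformly unless it is absorbed into the stated $O(\|\varepsilon\|^2)$ perturbative regime. Alternatively, one can note that the statement implicitly treats $\|\varepsilon\|_\infty\ln\delta$ as small. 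If this is not acceptable, I would keep the exact factor $\delta^{\varepsilon}$, which is legitimate by the near-constancy of $\varepsilon$ on $B(x,2\delta)$, and expand it only at the end.
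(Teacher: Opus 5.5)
\textbf{There is a genuine gap in Step 2. It is not the remainder issue you flag: the $\ln\delta$ term itself is spurious.} You apply Proposition~\ref{prop:expansion} in its un-normalized form, with $\ln|v_0|$ evaluated on a velocity of size $\delta$. But $F_{p(\cdot)}(\rho,\cdot)$ is a Luxemburg norm, hence $1$-homogeneous. Replacing $\mu$ by $s\mu$ replaces $v_0,v_\varepsilon,\lambda_\varepsilon$ by $sv_0,sv_\varepsilon,s\lambda_\varepsilon$, so it multiplies $F_{p(\cdot)}^2$ and $F_2^2$ by $s^2$. Meanwhile $\int_M|sv_0|^2\varepsilon\ln|sv_0|\,\rho\,d\vol_g$ picks up the extra term $s^2\ln s\int_M|v_0|^2\varepsilon\,\rho\,d\vol_g$. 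This term is first order in $\varepsilon$ and cannot be absorbed. What the normalized statement actually gives, after rescaling by homogeneity, is
\[
F_{p(\cdot)}^2(\rho,\mu)=F_2^2(\rho,\mu)+\int_M|v_0|^2\,\varepsilon\,\ln\frac{|v_0|}{F_2(\rho,\mu)}\,\rho\,d\vol_g+O\bigl(\|\varepsilon\|_\infty^2F_2^2(\rho,\mu)\bigr).
\]
For your test pair, $|v_0|=\delta(1+O(\delta))$ and $F_2=\delta(1+O(\delta))$. Hence $\ln(|v_0|/F_2)=O(\delta)$, the first-order correction is $O(\|\varepsilon\|_\infty\delta^3)$, and the $\ln\delta$ cancels exactly. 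Your own proposed remedy points the same way. Expanding $|v/\lambda|^{p(x)}$ with $\lambda\approx\delta$ gives $|v/\lambda|=1+O(\delta)$, i.e.\ a factor $(|v|/\lambda)^{\varepsilon}\approx1$, not $\delta^{\varepsilon}$. Declaring $\|\varepsilon\|_\infty\ln\delta$ small does not help either, because the discrepancy is first order in $\varepsilon$.

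\textbf{In fact the statement is false, so no bookkeeping can rescue it.} The claimed value is $\delta^2\bigl(1+\bar\varepsilon(x,\delta)\ln\delta+o(1)\bigr)$. Since $\bar\varepsilon(x,\delta)\to\varepsilon(x)$, we have $\bar\varepsilon(x,\delta)\ln\delta\to-\infty$ if $\varepsilon(x)>0$ and $\to+\infty$ if $\varepsilon(x)<0$.
\begin{itemize}
\item If $\varepsilon(x)>0$, the claimed value is negative for small $\delta$, contradicting $\Wp^2\ge0$.
\item If $\varepsilon(x)<0$, it exceeds $9\delta^2$ for small $\delta$. But the $\Wb$-geodesic between the two balls moves each $y$ along a geodesic of length $d_g(y,T(y))\le3\delta$. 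It is therefore an admissible competitor with $\lambda=3\delta$: the modular is $\le1$ since $|v_t/(3\delta)|\le1$. This gives $\Wp\le3\delta$.
\end{itemize}
Concretely, take a flat torus with $\varepsilon\equiv\varepsilon_0$ constant. Then $\Wp=W_{2+\varepsilon_0}$, because for fixed $(\rho,v)$ the optimal $\lambda$ is $(\int_0^1\!\int|v|^{p}\rho)^{1/p}$. The translation is optimal by Jensen, so $\Wp(\rho_0^\delta,\rho_1^\delta)=\delta$ exactly, for every small $\varepsilon_0$.

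The paper's own proof takes exactly your route and inherits the same error: it plugs $|v_t|\approx\delta$ and $\ln|v_t|=\ln\delta+O(\delta)$ into the un-normalized expansion. Carried out with the correct normalization, your Steps 1--3 would give an expansion of the form $\Wp^2=\delta^2\bigl(1+O(\delta)+O(\|\varepsilon\|_\infty^2)\bigr)$, with no $\ln\delta$ term.
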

	
	\begin{proof}[Rigorous argument]
		In normal coordinates centered at $x$, the geodesic from $x$ to $\exp_x(\delta v)$ is $y_t = t\delta v + O(\delta^3)$. The optimal velocity field for transporting $\rho_0^\delta$ to $\rho_1^\delta$ is approximately $v_t(y) = \delta v + O(\delta^2)$ on the ball (this follows from the fact that the optimal map is close to the exponential map). Then $|v_t| = \delta(1+O(\delta))$, so $\ln|v_t| = \ln\delta + O(\delta)$. 
		
		Plugging into Proposition \ref{prop:expansion}:
		\[
		\int_0^1\!\!\int_M |v_t|^2\rho_t = \delta^2 + O(\delta^3),
		\]
		\[
		\int_0^1\!\!\int_M \varepsilon|v_t|^2\ln|v_t|\rho_t = \delta^2\ln\delta \int_M \varepsilon\rho_0^\delta + O(\delta^2) = \delta^2\bar\varepsilon(x,\delta)\ln\delta + O(\delta^2).
		\]
		Summing gives $\Wp^2 = \delta^2 + \delta^2\bar\varepsilon(x,\delta)\ln\delta + o(\delta^2)$.
	\end{proof}
	
  	\subsection{Hessian formula for $\Renyi$}
	
	\begin{proposition}[Second derivative of $\Renyi$ along geodesics]
		\label{prop:hessian}
		
		Let $(\rho_t, v_t)$ be a $\Wp$-geodesics on a smooth compact Riemannian manifold $(M,g)$, associated with the Lagrangian
		\[
		L(x,v)=|v|_g^{p(x)}, \quad p(x)=2+\varepsilon(x),
		\]
		with $\varepsilon \in C^2(M)$ small.
		
		Then
		\[
		\frac{d^2}{dt^2}\Renyi(\rho_t)
		=
		\int_M \Bigl(\Ric_g(v_t,v_t) + \nabla^2\varepsilon(v_t,v_t)\Bigr)\rho_t \, d\vol_g
		+ O\!\left(\|\varepsilon\|_{C^2}\right)\int_M |v_t|^2\rho_t \, d\vol_g.
		\]
	\end{proposition}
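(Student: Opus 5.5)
The plan is to differentiate $\Renyi(\rho_t)$ twice by Otto calculus along the $\Wp$-geodesic of Theorem~\ref{thm:existence}, and to isolate exactly three kinds of terms:
\begin{itemize}
\item the quadratic (Bochner) terms that already appear for $\varepsilon\equiv0$;
\item the new terms produced by the weight $e^{\frac n2\varepsilon}$ and by the variable exponent in the geodesic equation;
\item the remainder, which should be $O(\|\varepsilon\|_{C^2})\int|v_t|^2\rho_t$.
\end{itemize}

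Write $U(t)=\int_M\rho_t^{1-\frac1n}e^{\frac n2\varepsilon}\,d\vol_g$, so that $\Renyi(\rho_t)=-n\log U(t)$ and
\[
\frac{d^2}{dt^2}\Renyi(\rho_t)=-n\frac{U''}{U}+n\Bigl(\frac{U'}{U}\Bigr)^2 .
\]
Using the continuity equation \eqref{eq:continuity} and integrating by parts, I expect
\[
U'=\int_M\Bigl(\tfrac1n\,\divergence v_t\,\rho_t^{1-\frac1n}-\tfrac n2\,\rho_t^{1-\frac1n}\langle\nabla\varepsilon,v_t\rangle\Bigr)e^{\frac n2\varepsilon}\,d\vol_g .
\]
For the second derivative I need $\partial_t v_t$. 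From $v_t=\lambda_\varepsilon|\nabla\phi_t|^{q-2}\nabla\phi_t$ and the Hamilton--Jacobi equation \eqref{eq:HJ}, the geodesic equation is $\partial_tv_t+\nabla_{v_t}v_t=E_\varepsilon(v_t)$. By Proposition~\ref{prop:HJ_variable_p}, $q(x)-2=-\varepsilon+O(\varepsilon^2)$ and $c(p)=\tfrac14+O(\varepsilon)$. The velocity bounds of Lemmas~\ref{lem:velocity_bounds}--\ref{lem:perturbed_velocity_bounds} then give $E_\varepsilon(v)=O(\|\varepsilon\|_{C^1})|v|^2$ pointwise. This is the source of the first part of the $O(\|\varepsilon\|_{C^2})$ error.

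Next I will insert this into $U''$ and apply Bochner's formula, $\divergence(\nabla_vv)-v\cdot\nabla\divergence v=\Ric_g(v,v)+|\nabla v|^2$. I then collect terms against the probability density $\tilde\rho_t:=\rho_t^{1-\frac1n}e^{\frac n2\varepsilon}/U$. The $\varepsilon$-free part gives $\int\Ric_g(v_t,v_t)\,\tilde\rho_t$, plus the dimensional defect
\[
D(t)=\int\bigl(|\nabla v_t|^2-\tfrac1n(\divergence v_t)^2\bigr)\tilde\rho_t+\tfrac1n\operatorname{Var}_{\tilde\rho_t}(\divergence v_t).
\]
The weight contributes $\frac n2\int\nabla^2\varepsilon(v_t,v_t)\tilde\rho_t$, plus cross terms $\langle\nabla\varepsilon,v_t\rangle\divergence v_t$ and $|\langle\nabla\varepsilon,v_t\rangle|^2$. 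The variable-exponent correction to $\partial_tv_t$ contributes a term of the form $\int\langle\nabla\varepsilon,v_t\rangle|v_t|^2\ln|v_t|$-type. This last term is exactly the one that Lemma~\ref{lem:Wp_test} shows carries the logarithm, and the weight $e^{\frac n2\varepsilon}$ was chosen to cancel it. After the cancellation, the surviving Hessian coefficient is $1$, as in Remark~\ref{rem:Renyi_BE}, which gives $\nabla^2\varepsilon(v_t,v_t)$. Replacing $\tilde\rho_t$ by $\rho_t$ costs $O(\|\varepsilon\|_\infty)$, since $\rho_t$ is bounded above and below.

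The main obstacle is the defect $D(t)$. It is nonnegative, but it is \emph{not} small in $\varepsilon$, so the identity as worded can only hold if $D(t)$ is absorbed. My first attempt will be to show that for $\Wp$-geodesics between densities as in Assumption~\ref{ass:bounded}, $D(t)$ is controlled by the $\varepsilon=0$ case. The $C^{1,\alpha}$ bounds of Lemma~\ref{lem:velocity_bounds} and Theorem~\ref{thm:stability} together give $\nabla v_t^\varepsilon-\nabla v_t^2=O(\|\varepsilon\|)$, which accounts for the $\varepsilon$-dependent part of $D(t)$. For the residual classical defect, I will check that it vanishes on the translation-type test geodesics of Lemma~\ref{lem:Wp_test}, where $\nabla v_t=O(\delta)$. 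There it is $o(\int|v_t|^2\rho_t)$, which is the regime in which the proposition is subsequently applied.

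If this reduction does not close in general, the honest fallback is to state the formula with $D(t)\ge0$ added on the right. That version still yields the convexity direction of Theorem~\ref{thm:main}. The converse direction then needs the test-density estimate $D(t)=o(\delta^2)$.
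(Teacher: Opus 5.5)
The gap is the one you flag yourself, and it cannot be closed: the identity as worded is false already for $\varepsilon\equiv0$. There $\Wp=\Wb$ and $v_t=\nabla\phi_t$, and carrying your computation through gives
\[
\frac{d^2}{dt^2}\Renyi(\rho_t)=\int_M\Bigl(\Ric_g(v_t,v_t)+|\nabla v_t|^2-\tfrac1n(\divergence v_t)^2\Bigr)\tilde\rho_t\,d\vol_g+\frac1n\Bigl(\int_M\divergence v_t\,\tilde\rho_t\,d\vol_g\Bigr)^2,\qquad \tilde\rho_t=\frac{\rho_t^{1-\frac1n}}{\int_M\rho_t^{1-\frac1n}\,d\vol_g}.
\]
So the last term of your $D(t)$, which comes from $n(U'/U)^2$, is $+\frac1n(\mathbb{E}_{\tilde\rho_t}\divergence v_t)^2$, not $+\frac1n\operatorname{Var}_{\tilde\rho_t}(\divergence v_t)$. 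For a dilating ball in $\mathbb{R}^n$ one has $\Renyi''=n(r'/r)^2$, while your $D$ vanishes. Now take a flat torus $\mathbb{T}^n$ with $n\ge2$ and $\varepsilon\equiv0$. The right-hand side of the proposition vanishes identically. The left-hand side is this corrected defect, which is strictly positive along every geodesic between smooth positive densities that is not a translation. Indeed, it vanishes only if $\nabla^2\phi_t$ is a multiple of the identity, hence constant on flat $\mathbb{T}^n$, with zero $\tilde\rho_t$-mean trace, i.e.\ $\nabla^2\phi_t\equiv0$.

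Consequently no absorption argument can succeed. Estimating $\nabla v^\varepsilon_t-\nabla v^2_t$ touches only the $\varepsilon$-dependent part. Moreover, Theorem~\ref{thm:stability} is only an $L^2$ bound on $v^\varepsilon_t-v^2_t$ and gives no derivative control. Verifying $D(t)=o(\delta^2)$ on the ball-translation family proves an asymptotic statement about those geodesics, not the identity for every $\Wp$-geodesic. The paper's proof does not supply the missing step either. It removes exactly these terms by three unjustified moves: it asserts that $F'$ vanishes at $\varepsilon=0$, whereas your own formula gives $F'=\frac1n\int_M\rho_t^{1-\frac1n}\divergence v_t\,d\vol_g$ there; it drops the $|\nabla v_t|^2$ contained in its term $A(t)$; and it declares the prefactor $\frac{n-1}{n^2}\cdot\frac nF$ to tend to $1$.

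There is a second $\varepsilon$-independent discrepancy that you do not catch: replacing $\tilde\rho_t$ by $\rho_t$ does not cost only $O(\|\varepsilon\|_\infty)$. Even at $\varepsilon=0$, $\tilde\rho_t\neq\rho_t$ unless $\rho_t$ is constant. Two-sided bounds on $\rho_t$ only make the two densities comparable. Hence $\int_M\Ric_g(v_t,v_t)(\tilde\rho_t-\rho_t)\,d\vol_g$ is not $O(\|\varepsilon\|_\infty)\int_M|v_t|^2\rho_t\,d\vol_g$; on a round sphere it equals $(n-1)\int_M|v_t|^2(\tilde\rho_t-\rho_t)\,d\vol_g$. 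The paper makes the same replacement without comment. Your fallback inequality must therefore keep $\tilde\rho_t$ in the curvature term. The convexity direction with constant $K$ and $\Wp^2$ then does not follow without relating $\int_M|v_t|^2\tilde\rho_t$ to $\Wp^2$, and the density bounds give that only up to multiplicative constants.

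Smaller points:
\begin{itemize}
\item $U'$ carries $+\frac n2\langle\nabla\varepsilon,v_t\rangle$, not $-\frac n2\langle\nabla\varepsilon,v_t\rangle$.
\item The weight contributes $-\frac{n^2}{2}\int_M\nabla^2\varepsilon(v_t,v_t)\tilde\rho_t$ to $\Renyi''$, because of the factor $-n$ in $-n\log U$. No mechanism converts this into coefficient $1$. The coefficient is immaterial anyway, since $|\nabla^2\varepsilon(v,v)|\le\|\varepsilon\|_{C^2}|v|^2$ is absorbed in the error term.
\item For $\varepsilon\neq0$ the field $v_t=\lambda_\varepsilon|\nabla\phi_t|^{q-2}\nabla\phi_t$ is not a gradient, so Bochner yields $\operatorname{tr}\bigl((\nabla v_t)^2\bigr)$ rather than $|\nabla v_t|^2$.
\item That term and the cross terms $\langle\nabla\varepsilon,v_t\rangle\divergence v_t$ involve $\nabla v_t$. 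They are not bounded by $\|\varepsilon\|_{C^2}\int_M|v_t|^2\rho_t$. They can be absorbed into the defect in an inequality, but not in an identity.
\end{itemize}
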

	
	\begin{proof}
		\begin{itemize}
			\item[$\bullet$]\textbf{Step 1: Definition of the functional}
			
			Define
			\[
			F(t) = \int_M \rho_t^{1-\frac{1}{n}} e^{\frac{n}{2}\varepsilon} \, d\vol_g,
			\quad
			\Renyi(\rho_t) = -n \log F(t).
			\]
			
			Then
			\[
			\frac{d^2}{dt^2}\Renyi
			=
			-n \frac{F''}{F}
			+
			n\left(\frac{F'}{F}\right)^2.
			\]
			
			\item[$\bullet$]\textbf{Step 2: First derivative of $F$.}
			
			Define $\Phi(x) := \rho_t(x)^{1-\frac{1}{n}}\,e^{\frac{n}{2}\varepsilon(x)}$.
			Since $\rho_t \in C^\infty(M)$ (by regularity of optimal transport under
			Assumption~\ref{ass:bounded}, see \cite{Figalli2010}), $\Phi \in C^1(M)$ and
			differentiation under the integral sign is justified.
			
			Using the continuity equation $\partial_t\rho_t = -\nabla\cdot(\rho_t v_t)$:
			\begin{align}
				F'(t)
				&= \int_M \partial_t\bigl[\rho_t^{1-\frac{1}{n}}e^{\frac{n}{2}\varepsilon}\bigr]\,d\vol_g
				= \int_M \Bigl(1-\tfrac{1}{n}\Bigr)\rho_t^{-\frac{1}{n}}
				e^{\frac{n}{2}\varepsilon}\,\partial_t\rho_t\,d\vol_g
				\notag\\
				&= -\Bigl(1-\tfrac{1}{n}\Bigr)
				\int_M \rho_t^{-\frac{1}{n}}e^{\frac{n}{2}\varepsilon}
				\nabla\cdot(\rho_t v_t)\,d\vol_g.
				\label{eq:Fprime_step1}
			\end{align}
			Integration by parts on the compact manifold $(M,g)$ (no boundary term):
			\begin{equation}\label{eq:IBP}
				-\int_M f\,\nabla\cdot(\rho_t v_t)\,d\vol_g
				= \int_M \rho_t v_t\cdot\nabla f\,d\vol_g
				\quad \text{for any } f\in C^1(M).
			\end{equation}
			Applying~\eqref{eq:IBP} with $f = \rho_t^{-\frac{1}{n}}e^{\frac{n}{2}\varepsilon}$:
			\begin{align}
				\nabla f
				&= -\tfrac{1}{n}\rho_t^{-\frac{1}{n}-1}e^{\frac{n}{2}\varepsilon}\nabla\rho_t
				+ \rho_t^{-\frac{1}{n}}e^{\frac{n}{2}\varepsilon}\tfrac{n}{2}\nabla\varepsilon
				\notag\\
				&= e^{\frac{n}{2}\varepsilon}\rho_t^{-\frac{1}{n}}
				\Bigl(-\tfrac{1}{n}\nabla\ln\rho_t + \tfrac{n}{2}\nabla\varepsilon\Bigr).
				\label{eq:grad_f}
			\end{align}
			Substituting into~\eqref{eq:Fprime_step1}--\eqref{eq:IBP}:
			\begin{align}
				F'(t)
				&= \Bigl(1-\tfrac{1}{n}\Bigr)
				\int_M \rho_t^{1-\frac{1}{n}}e^{\frac{n}{2}\varepsilon}
				v_t\cdot\Bigl(-\tfrac{1}{n}\nabla\ln\rho_t + \tfrac{n}{2}\nabla\varepsilon\Bigr)
				d\vol_g
				\notag\\
				&= -\frac{n-1}{n^2}
				\int_M\rho_t^{1-\frac{1}{n}}e^{\frac{n}{2}\varepsilon}\,\nabla\ln\rho_t\cdot v_t\,d\vol_g
				+ \frac{(n-1)}{2}
				\int_M\rho_t^{1-\frac{1}{n}}e^{\frac{n}{2}\varepsilon}\,\nabla\varepsilon\cdot v_t\,d\vol_g.
				\label{eq:Fprime_final}
			\end{align}
			
			\item[$\bullet$]\textbf{Step 3: Structure of the velocity field}
			
			The geodesic velocity satisfies
			\[
			v_t = \nabla_\xi H(x,\nabla \phi_t),
			\]
			where $\phi_t$ solves the Hamilton--Jacobi equation
			\[
			\partial_t \phi_t + H(x,\nabla \phi_t)=0.
			\]
			
			With
			\[
			H(x,\xi)
			=
			(p(x)-1)\,p(x)^{-\frac{p(x)}{p(x)-1}} |\xi|_g^{q(x)},
			\quad
			q(x)=\frac{p(x)}{p(x)-1},
			\]
			we obtain
			\[
			v_t
			=
			(p(x)-1)\,p(x)^{-\frac{p(x)}{p(x)-1}}
			\, q(x)\, |\nabla \phi_t|^{q(x)-2} \nabla \phi_t.
			\]
			
			\item[$\bullet$]\textbf{Step 4: First-order expansion in $\varepsilon$.}
			
			Let $p(x)=2+\varepsilon(x)$. Then
			\[
			q(x)=2-\varepsilon(x)+O(\varepsilon^2),
			\]
			and
			\[
			(p-1)p^{-\frac{p}{p-1}} q
			=
			1 + O(\varepsilon).
			\]
			
			Hence
			\[
			v_t
			=
			\left(1 + O(\varepsilon)\right)
			|\nabla \phi_t|^{q(x)-2} \nabla \phi_t.
			\]
			
			Expanding
			\[
			|\nabla \phi_t|^{q(x)-2}
			=
			|\nabla \phi_t|^{-\varepsilon}
			=
			1 - \varepsilon \ln |\nabla \phi_t| + O(\varepsilon^2),
			\]
			we obtain
			\[
			v_t
			=
			\nabla \phi_t
			-
			\varepsilon \nabla \phi_t \ln |\nabla \phi_t|
			+
			O(\varepsilon).
			\]
			
			\item[$\bullet$]\textbf{Step 5: Time derivative of divergence}
			
			We compute
			\[
			\partial_t (\nabla \cdot v_t)
			=
			\nabla \cdot (\partial_t v_t).
			\]
			
			Using
			\[
			\partial_t v_t
			=
			D^2_{\xi\xi} H(x,\nabla \phi_t)\, \nabla (\partial_t \phi_t)
			+ D^2_{x\xi} H(x,\nabla \phi_t)\, v_t,
			\]
			and the Hamilton--Jacobi equation
			\[
			\partial_t \phi_t = - H(x,\nabla \phi_t),
			\]
			we obtain
			\[
			\partial_t (\nabla \cdot v_t)
			=
			- \nabla \cdot \left(
			D^2_{\xi\xi} H \, \nabla H
			\right)
			+
			\nabla \cdot \left(
			D^2_{x\xi} H \, v_t
			\right).
			\]
			
			\item[$\bullet$]\textbf{Step 6: Leading-order expansion.}
			
			At leading order ($\varepsilon=0$), we recover the quadratic case:
			\[
			\partial_t (\nabla \cdot v_t)
			=
			- \Ric(v_t,v_t)
			- |\nabla v_t|^2
			- \nabla(\nabla \cdot v_t)\cdot v_t.
			\]
			
			\item[$\bullet$]\textbf{Step 7: First-order correction}
			
			The dependence of $H$ on $x$ produces additional terms involving $\nabla p$ and $\nabla^2 p$. Since $p=2+\varepsilon$, these contribute
			\[
			\nabla^2 \varepsilon(v_t,v_t)
			+
			O(\|\varepsilon\|_{C^2} |v_t|^2).
			\]
			
			\item[$\bullet$]\textbf{Step 8: Second derivative of $F$.}
			
			Differentiating~\eqref{eq:Fprime_final} with respect to $t$ and using the
			continuity equation again, we split $F''(t)$ into three contributions:
			\begin{equation}\label{eq:Fsecond_decomp}
				F''(t) = A(t) + B(t) + C(t),
			\end{equation}
			where:
			\begin{itemize}
				\item $A(t)$ comes from differentiating $\partial_t\rho_t$ inside the integrals
				using again integration by parts, leading to a term involving $\nabla\cdot v_t$;
				\item $B(t)$ comes from differentiating $v_t$ via the geodesic equation
				$\partial_t v_t + \nabla_{v_t}v_t = -\nabla_g\phi_t$;
				\item $C(t)$ is the cross-term between the $\nabla\varepsilon$ contribution and
				$\partial_t\rho_t$, which produces an $O(\|\varepsilon\|_{C^2})$ remainder.
			\end{itemize}
			
			\medskip
			
			\textit{Term $A(t)$}: Differentiating the first integral in~\eqref{eq:Fprime_final},
			applying the continuity equation and the Bochner formula
			$\Delta\ln\rho_t = \frac{\Delta\rho_t}{\rho_t} - \frac{|\nabla\rho_t|^2}{\rho_t^2}$,
			gives (exactly as in the proof of Theorem~\ref{thm:lott_villani}, see
			\cite[Theorem 4.17]{LottVillani2009}):
			\begin{equation}\label{eq:term_A}
				A(t) = -\frac{n-1}{n^2}
				\int_M\rho_t^{1-\frac{1}{n}}e^{\frac{n}{2}\varepsilon}
				\bigl[\Ric_g(v_t,v_t) + |\nabla v_t|_g^2\bigr]\,d\vol_g.
			\end{equation}
			Here we have used that the weight $e^{\frac{n}{2}\varepsilon}$ is
			independent of $t$, so it passes through $\partial_t$.
			
			\textit{Term $B(t)$}: Using the geodesic equation
			$\partial_t v_t = -\nabla_{v_t}v_t - \nabla_g\phi_t$ together with
			the Hamilton-Jacobi equation $\partial_t\phi_t = -H(x,\nabla\phi_t)$
			and the first-order expansion of Step~4:
			\begin{equation}\label{eq:term_B}
				B(t) = -\frac{n-1}{n^2}
				\int_M\rho_t^{1-\frac{1}{n}}e^{\frac{n}{2}\varepsilon}
				\nabla^2\varepsilon(v_t,v_t)\,d\vol_g
				+ O(\|\varepsilon\|_{C^2})\int_M|v_t|^2\rho_t\,d\vol_g.
			\end{equation}
			The $\nabla^2\varepsilon$ term arises from the spatial dependence of $H(x,\xi)$
			on $p(x)$: differentiating the Hamiltonian with respect to $x$ produces
			$\partial_x H = \partial_p H \cdot \partial_x p = \partial_p H \cdot \nabla\varepsilon$,
			whose covariant derivative gives $\nabla^2\varepsilon$ after contracting with $v_t$.
			
			\textit{Term $C(t)$}: The cross-term satisfies
			\begin{equation}\label{eq:term_C}
				|C(t)| \le C_0\|\varepsilon\|_{C^1}\int_M|v_t|^2\rho_t\,d\vol_g,
			\end{equation}
			which is absorbed into the $O(\|\varepsilon\|_{C^2})$ error.
			
			\textit{Assembly}: Combining~\eqref{eq:term_A}--\eqref{eq:term_C} into
			\eqref{eq:Fsecond_decomp}, and using the expression
			$\frac{d^2}{dt^2}\mathcal{R}_\varepsilon = -n\frac{F''}{F} + n\!\left(\frac{F'}{F}\right)^{\!2}$
			together with the fact that $\left(\frac{F'}{F}\right)^2 = O(\|\varepsilon\|^2)$ (from
			Step~2, since $F'$ vanishes at $\varepsilon=0$):
			\[
			\frac{d^2}{dt^2}\mathcal{R}_\varepsilon(\rho_t)
			= \int_M\bigl[\Ric_g(v_t,v_t) + \nabla^2\varepsilon(v_t,v_t)\bigr]\rho_t\,d\vol_g
			+ O(\|\varepsilon\|_{C^2})\int_M|v_t|^2\rho_t\,d\vol_g,
			\]
			where we used $\int\rho_t^{1-1/n}e^{\frac{n}{2}\varepsilon}/F(t) = 1/F(t) \cdot F(t)=1$
			to normalize the prefactor $\frac{n-1}{n^2}\cdot\frac{n}{F}\to1$ at leading order.
			\item[$\bullet$]\textbf{Step 9: Conclusion}
			
			Using
			\[
			\frac{d^2}{dt^2}\Renyi
			=
			-n \frac{F''}{F}
			+
			n\left(\frac{F'}{F}\right)^2,
			\]
			and noting that the second term is of order $O(\|\varepsilon\|^2)$, we obtain
			\[
			\frac{d^2}{dt^2}\Renyi(\rho_t)
			=
			\int_M
			\left(
			\Ric(v_t,v_t)
			+
			\nabla^2 \varepsilon(v_t,v_t)
			\right)
			\rho_t d\vol_g
			+
			O(\|\varepsilon\|_{C^2})
			\int_M |v_t|^2 \rho_t d\vol_g.
			\]
		\end{itemize}
	\end{proof}
	
	\subsection{Expansion along $\Wp$-geodesics for test densities}
	
	\begin{lemma}[Expansion of $\Renyi$ along $\Wp$-geodesics] \label{lem:Renyi_Wp_expansion}
		For the test densities $\rho_0^\delta,\rho_1^\delta$ and $t=1/2$,
		\[
		\Renyi(\rho_{1/2}^{\varepsilon,\delta}) = \frac12\Renyi(\rho_0^\delta) + \frac12\Renyi(\rho_1^\delta) - \frac18\bigl(\Ric_x(v,v)+\nabla^2\varepsilon_x(v,v)\bigr)\delta^2 + o(\delta^2).
		\]
	\end{lemma}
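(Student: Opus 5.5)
The plan is to obtain the midpoint expansion from a second-order Taylor expansion in $t$ of the function $g(t):=\Renyi(\rho_t^{\varepsilon,\delta})$ along the $\Wp$-geodesic joining $\rho_0^\delta$ and $\rho_1^\delta$. The only structural input is Proposition~\ref{prop:hessian}. For any $C^2$ function on $[0,1]$,
\[
g(\tfrac12)-\tfrac12 g(0)-\tfrac12 g(1)=-\int_0^1 G(\tfrac12,s)\,g''(s)\,ds,
\]
where $G(t,s)\ge0$ is the Green kernel of $-d^2/dt^2$ on $[0,1]$ with Dirichlet data, and $\int_0^1 G(\tfrac12,s)\,ds=\tfrac18$. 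So it suffices to show
\[
g''(s)=\bigl(\Ric_x(v,v)+\nabla^2\varepsilon_x(v,v)\bigr)\delta^2+o(\delta^2)
\]
uniformly in $s\in[0,1]$.

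\textbf{Step 1: smoothing the endpoints.} Since $\rho_0^\delta,\rho_1^\delta$ are indicator densities, they do not satisfy Assumption~\ref{ass:bounded}. I would first replace them by the mollified densities of Lemma~\ref{lem:density_approx}, which do, so that Proposition~\ref{prop:hessian} applies along the corresponding $\Wp$-geodesics. At the end I pass to the limit $n\to\infty$, using $L^1$ convergence and continuity of $\Renyi$ under $L^1$ convergence with uniform bounds. The exponent $1-1/n<1$ makes $\rho\mapsto\int\rho^{1-1/n}e^{\frac n2\varepsilon}$ continuous here.

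\textbf{Step 2: localising the Hessian.} Proposition~\ref{prop:hessian} gives
\[
g''(s)=\int_M\bigl(\Ric_g+\nabla^2\varepsilon\bigr)(v_s,v_s)\,\rho_s\,d\vol_g+O(\|\varepsilon\|_{C^2})\int_M|v_s|^2\rho_s\,d\vol_g.
\]
All the $\rho_s$ are supported in a ball of radius $O(\delta)$ around $x$. As in the proof of Lemma~\ref{lem:Wp_test}, the velocity satisfies $v_s(y)=\delta\,\tau(y)+O(\delta^2)$ with $\tau$ the parallel transport of $v$ in normal coordinates at $x$. By continuity of $\Ric_g$ and $\nabla^2\varepsilon$ (recall $\varepsilon\in C^2$), the tensor at $y$ equals its value at $x$ up to $o(1)$. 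Integrating against the probability density $\rho_s$ then gives the main term $(\Ric_x+\nabla^2\varepsilon_x)(v,v)\,\delta^2+o(\delta^2)$.

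\textbf{Step 3: the main obstacle.} The remainder term $O(\|\varepsilon\|_{C^2})\int|v_s|^2\rho_s=O(\|\varepsilon\|_{C^2}\delta^2)$ is the real difficulty. It is of order $\delta^2$, not $o(\delta^2)$, so read literally it spoils the sharp constant. I would try first to show that this remainder, as produced in Steps 7--8 of the Hessian computation, involves $\nabla\varepsilon$ and $\nabla^2\varepsilon$ evaluated along the trajectory, and hence localises at $x$ just like the main term. For the sharp statement, it would then have to be absorbed into the effective tensor $\nabla^2\varepsilon_x(v,v)$ rather than remain as an error.

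If that fails, I would fall back to stating the lemma with $o(\delta^2)$ understood as $o(\delta^2)+O(\|\varepsilon\|_{C^2}\delta^2)$. This is consistent with the "up to $O(\|\varepsilon\|_{C^2}\Wp^2)$" formulation of Theorem~\ref{thm:main}, which is where the lemma is used.

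A secondary check concerns the endpoint values: the $\ln\delta$ and $\bar\varepsilon$ terms from Lemma~\ref{lem:Renyi_ball} do not need to cancel. The Green-function identity involves only $g''$, and the endpoint values appear exactly as $\tfrac12 g(0)+\tfrac12 g(1)$.
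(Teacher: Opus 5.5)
Your route is the paper's. The Green-kernel identity is equivalent to the symmetric Taylor expansion of $h(t)=\Renyi(\rho_t^{\varepsilon,\delta})$ about $t=\tfrac12$ that the paper uses, and both reduce the lemma to
\[
h''(t)=\bigl(\Ric_x(v,v)+\nabla^2\varepsilon_x(v,v)\bigr)\delta^2+o(\delta^2)\quad\text{uniformly in } t .
\]
The obstacle you flag in Step~3 is a genuine gap, and your proposal does not close it. Proposition~\ref{prop:hessian} gives $h''$ only up to $O(\|\varepsilon\|_{C^2})\int_M|v_t|^2\rho_t\,d\vol_g=O(\|\varepsilon\|_{C^2}\delta^2)$. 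That is the same order as the $\nabla^2\varepsilon_x(v,v)\delta^2$ term you want to isolate. The paper's proof drops this remainder without comment, so it has the same hole. Your first remedy cannot work either. Even if the remainder has the form $\int_M E(y)(v_t,v_t)\rho_t$ with $E$ continuous and $|E|=O(\|\varepsilon\|_{C^2})$, localization turns it into $E_x(v,v)\delta^2$, which shifts the coefficient rather than vanishing. Absorbing it would need $E_x(v,v)=0$, i.e.\ an exact second-variation formula. Proposition~\ref{prop:hessian}, whose Steps~7--8 are heuristic, does not supply one.

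No argument can close the gap, because the stated coefficient appears to be wrong; here is a check in the simplest case. Take $n=1$ and $M$ the flat circle. Then $\Renyi(\rho)=-\log\int_{\{\rho>0\}}e^{\varepsilon/2}$ and $\supp\rho_t=[\gamma_-(t),\gamma_+(t)]$, where $\gamma_\pm$ are trajectories of the Lagrangian $|\dot\gamma/\lambda|^{p(\gamma)}$. Every particle moves a distance $\approx\delta$ in unit time, so the Luxemburg normalization forces $|\dot\gamma|/\lambda=1+O(\|\varepsilon\|_{C^1})$. Energy conservation $(p(\gamma)-1)|\dot\gamma/\lambda|^{p(\gamma)}=\mathrm{const}$ then gives $e^{\varepsilon(\gamma)/2}\dot\gamma=\mathrm{const}\cdot\bigl(1+O(\|\varepsilon\|_{C^1}^2)\bigr)$ along each trajectory. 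Hence $\int_{\gamma_-(t)}^{\gamma_+(t)}e^{\varepsilon/2}$ is affine in $t$ up to second-order terms, and $h''=O(\|\varepsilon\|_{C^2}^2\delta^2)+o(\delta^2)$. With $\varepsilon=s\cos\theta$ and $x=0$, the lemma would instead require $h(\tfrac12)-\tfrac12h(0)-\tfrac12h(1)=\tfrac{s}{8}\delta^2+o(\delta^2)$, which fails for small $s$. In dimension $n$ the same mechanism applies: at first order the trajectories are straight and $J_t\,e^{\varepsilon(\gamma_t)/2}$ is affine in $t$. Writing $J_t^{1/n}e^{\frac n2\varepsilon}=(J_te^{\varepsilon/2})^{1/n}e^{\frac{n^2-1}{2n}\varepsilon}$ then gives, in flat space, the coefficient $-\frac{n^2-1}{2}\nabla^2\varepsilon_x(v,v)$ rather than $+\nabla^2\varepsilon_x(v,v)$.

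So only your fallback, with error $o(\delta^2)+O(\|\varepsilon\|_{C^2}\delta^2)$, is provable. In that form the $\nabla^2\varepsilon_x(v,v)$ term is itself inside the error, so the lemma says nothing beyond the case $\varepsilon=0$.

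A smaller point concerns Step~1. Mollified indicators vanish away from the ball, so Lemma~\ref{lem:density_approx} does not actually deliver Assumption~\ref{ass:bounded}. Passing to the limit in the mollification parameter would also need stability of $\Wp$-midpoints under perturbation of the endpoints, in a topology controlling $\int_M\rho^{1-1/n}e^{\frac n2\varepsilon}$, together with uniformity of the $o(\delta^2)$. Neither is available. The paper sidesteps this only by applying Proposition~\ref{prop:hessian} directly to indicator densities, which that proposition's proof (assuming $\rho_t\in C^\infty$) does not cover.
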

	
	\begin{proof}
		Let $h(t)=\Renyi(\rho_t^{\varepsilon,\delta})$. By Proposition \ref{prop:hessian}, for any $t\in[0,1]$,
		\[
		h''(t) = \int_M (\Ric(v_t,v_t)+\nabla^2\varepsilon(v_t,v_t))\rho_t + O(\|\varepsilon\|_{C^2})\int_M |v_t|^2\rho_t.
		\]
		For the test densities, one has $|v_t|=\delta+O(\delta^2)$, $\int|v_t|^2\rho_t=\delta^2+o(\delta^2)$, and
		\[
		\int_M \Ric(v_t,v_t)\rho_t = \Ric_x(v,v)\delta^2+o(\delta^2),\qquad
		\int_M \nabla^2\varepsilon(v_t,v_t)\rho_t = \nabla^2\varepsilon_x(v,v)\delta^2+o(\delta^2).
		\]
		Hence $h''(t)=(\Ric_x(v,v)+\nabla^2\varepsilon_x(v,v))\delta^2+o(\delta^2)$ uniformly in $t$.
		
		Expanding $h$ around $t=1/2$ using Taylor's formula gives
		\[
		h(0)=h(1/2)-\frac12 h'(1/2)+\frac18 h''(\xi_0),\quad
		h(1)=h(1/2)+\frac12 h'(1/2)+\frac18 h''(\xi_1),
		\]
		with $\xi_0\in(0,1/2)$, $\xi_1\in(1/2,1)$. Adding these two equalities eliminates $h'(1/2)$:
		\[
		h(0)+h(1)=2h(1/2)+\frac18(h''(\xi_0)+h''(\xi_1)).
		\]
		Since $h''(\xi_0)=h''(\xi_1)=(\Ric_x(v,v)+\nabla^2\varepsilon_x(v,v))\delta^2+o(\delta^2)$, we obtain
		\[
		h(1/2)=\frac12 h(0)+\frac12 h(1)-\frac18(\Ric_x(v,v)+\nabla^2\varepsilon_x(v,v))\delta^2+o(\delta^2),
		\]
		which is exactly the claimed identity.
	\end{proof}
	\subsection{Direct theorem}
	
	\begin{theorem}[Direct implication] \label{thm:converse}
		Assume $\Ric_g + \nabla^2\varepsilon \ge K$. Then for any $\Wp$-geodesics $(\rho_t)$,
		\[
		\Renyi(\rho_t) \le (1-t)\Renyi(\rho_0) + t\Renyi(\rho_1) - \frac{K}{2}t(1-t)\Wp^2(\rho_0,\rho_1) + O(\|\varepsilon\|_{C^2}\Wp^2).
		\]
	\end{theorem}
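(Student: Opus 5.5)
The plan is the standard one-dimensional convexity argument: reduce the statement to a differential inequality for $h(t) := \Renyi(\rho_t)$ along the given $\Wp$-geodesic $(\rho_t, v_t)$, and then integrate it against the Green's function of $-d^2/dt^2$ on $[0,1]$.

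\textbf{Step 1: lower bound on $h''$.} Proposition~\ref{prop:hessian} gives
\[
h''(t) = \int_M \bigl(\Ric_g(v_t,v_t) + \nabla^2\varepsilon(v_t,v_t)\bigr)\rho_t\,d\vol_g + O(\|\varepsilon\|_{C^2})\int_M |v_t|^2\rho_t\,d\vol_g .
\]
Under the hypothesis $\Ric_g + \nabla^2\varepsilon \ge K$, the first integral is at least $K\int_M |v_t|^2\rho_t\,d\vol_g$. Hence
\[
h''(t) \ge \bigl(K - C\|\varepsilon\|_{C^2}\bigr)\int_M |v_t|^2\rho_t\,d\vol_g .
\]
The differentiations in $t$ are legitimate for densities satisfying Assumption~\ref{ass:bounded}: the curve $\rho_t$ is smooth with $a' \le \rho_t \le b'$, so $h$ is $C^2$.

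\textbf{Step 2: relate the kinetic energy to $\Wp^2$.} I need $\int_M |v_t|^2\rho_t\,d\vol_g = \Wp^2(\rho_0,\rho_1) + O(\|\varepsilon\|_\infty \Wp^2)$ uniformly in $t$. By Theorem~\ref{thm:existence}, the geodesic has constant Finsler speed $F_{p(\cdot)}(\rho_t,\partial_t\rho_t) = \lambda_\varepsilon = \Wp$. To pass from the $p(x)$-modular normalization to the $L^2(\rho_t)$ norm, I combine Proposition~\ref{prop:expansion} with the velocity bounds of Lemma~\ref{lem:perturbed_velocity_bounds}. Together these give
\[
\int_M |v_t|^2\rho_t = F_2^2(\rho_t,\partial_t\rho_t)\,(1+O(\|\varepsilon\|_\infty)) = \Wp^2\,(1+O(\|\varepsilon\|_\infty)).
\]
The correction term $\int |v_0|^2\varepsilon\ln|v_0|\,\rho$ is bounded by $C\|\varepsilon\|_\infty F_2^2$, exactly as in Lemma~\ref{lem:distance_comparison}.

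Substituting into Step 1 yields
\[
h''(t) \ge K\,\Wp^2 - C'\|\varepsilon\|_{C^2}\Wp^2 .
\]
The constant $C'$ absorbs $|K|$. This uses $\|\varepsilon\|_\infty \le \|\varepsilon\|_{C^2}$.

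\textbf{Step 3: integrate.} Set $g(t) := h(t) - (1-t)h(0) - t h(1)$, so that $g(0)=g(1)=0$. Writing $G(t,s) = \min(t,s)\bigl(1-\max(t,s)\bigr)$, we have
\[
g(t) = -\int_0^1 G(t,s)\,h''(s)\,ds .
\]
Since $G \ge 0$ and $\int_0^1 G(t,s)\,ds = \tfrac12 t(1-t)$, this gives
\[
g(t) \le -\tfrac{K}{2}\,t(1-t)\,\Wp^2 + \tfrac{C'}{2}\|\varepsilon\|_{C^2}\,t(1-t)\,\Wp^2 .
\]
This is the claimed inequality, with the error term being $O(\|\varepsilon\|_{C^2}\Wp^2)$.

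\textbf{Main obstacle.} The delicate point is Step 2: the error in Proposition~\ref{prop:hessian} must be controlled uniformly in $t$ by $\Wp^2$ with a constant independent of the endpoints. This requires the pointwise velocity bounds $c/2 \le |v_t| \le 2C$ along the $\Wp$-geodesic, so that the logarithmic factor $\ln|v_t|$ stays bounded.

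I would first try to obtain these bounds from Theorem~\ref{thm:stability} together with Lemma~\ref{lem:perturbed_velocity_bounds}. The difficulty is that the constants $c, C$ there depend on $a, b$ and hence on the endpoints. If uniformity fails, the fallback is to state the $O(\cdot)$ with a constant depending on $a, b$, which is consistent with how Theorem~\ref{thm:direct1} is formulated.
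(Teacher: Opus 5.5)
Your argument is essentially the paper's proof. It has the same three ingredients: the lower bound $h''(t)\ge (K-C\|\varepsilon\|_{C^2})\int_M|v_t|^2\rho_t\,d\vol_g$ from Proposition~\ref{prop:hessian}, a comparison of the kinetic energy with $\Wp^2$ via Proposition~\ref{prop:expansion}, and integration of $h''\ge\kappa$ against the chord, where your Green's-function formula is a cleaner version of the paper's double integration in Step~3. Your two-sided estimate $\int_M|v_t|^2\rho_t\,d\vol_g=\Wp^2(1+O(\|\varepsilon\|_\infty))$ is in fact better than the paper's one-sided lower bound, which can only be multiplied by $K-C\|\varepsilon\|_{C^2}$ when that factor is nonnegative; note only that Theorem~\ref{thm:existence} normalizes the time-integrated modular, so the speed is constant just up to a factor $1+O(\|\varepsilon\|_\infty)$ (the conserved quantity is $\int_M(p-1)|v_t/\lambda_\varepsilon|^{p(x)}\rho_t\,d\vol_g$), which is all you need.
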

	
	\begin{proof}[Proof of Theorem~\ref{thm:converse}]
		
		\textbf{Step 1: Lower bound on the second derivative.}
		By Proposition~\ref{prop:hessian} and the assumption $\Ric_g+\nabla^2\varepsilon \ge K$:
		\begin{equation}\label{eq:Rsecond_lb}
			\frac{d^2}{dt^2}\mathcal{R}_\varepsilon(\rho_t)
			\ge (K - C\|\varepsilon\|_{C^2})\int_M|v_t|^2\rho_t\,d\vol_g
			\quad \text{for a.e.\ } t\in[0,1].
		\end{equation}
		
		\textbf{Step 2: Kinetic energy and $\Wp^2$.}
		Integrating~\eqref{eq:Rsecond_lb} over $t$, and using the fact that
		$\int_0^1\int_M|v_t|^2\rho_t\,d\vol_g\,dt \ge \Wp^2(\rho_0,\rho_1)$
		(which follows from Proposition~\ref{prop:expansion} and the normalization
		of the $\Wp$-geodesics), we have
		\begin{equation}\label{eq:kinetic_lb}
			\int_M|v_t|^2\rho_t\,d\vol_g \ge \Wp^2(\rho_0,\rho_1) - O(\|\varepsilon\|_\infty\Wp^2)
			\quad \text{for a.e.\ } t.
		\end{equation}
		
		\textbf{Step 3: From second derivative bound to convexity.}
		Let $h(t) := \mathcal{R}_\varepsilon(\rho_t)$.  From Steps 1--2,
		\[
		h''(t) \ge \kappa := (K - C\|\varepsilon\|_{C^2})
		\bigl(\Wp^2(\rho_0,\rho_1) - O(\|\varepsilon\|_\infty\Wp^2)\bigr)
		\quad \text{a.e.}
		\]
		Since $h''(t) \ge \kappa$ a.e.\ on $[0,1]$ (with $\kappa$ possibly depending on $t$
		but bounded below by a constant), the standard convexity inequality for functions
		with a lower bound on the second derivative gives: for all $t\in[0,1]$,
		\begin{equation}\label{eq:convexity_general}
			h(t) \le (1-t)h(0) + th(1) - \frac{\kappa}{2}t(1-t).
		\end{equation}
		\textit{Justification of~\eqref{eq:convexity_general}}: since $h''(s) \ge \kappa$
		for a.e.\ $s$, write $h(s) = \ell(s) + g(s)$ where $\ell(s)=(1-s)h(0)+sh(1)$ is the
		chord and $g(s) = h(s)-\ell(s)$ satisfies $g(0)=g(1)=0$ and $g''(s)\ge\kappa$.
		By integration: $g'(t) = g'(0) + \int_0^t g''(s)\,ds \ge g'(0) + \kappa t$.
		Integrating again and using $g(1)=0$ to determine $g'(0) = -\frac\kappa 2$,
		one obtains $g(t) \ge \frac\kappa 2 t(t-1)$, i.e.\ $h(t) \le \ell(t) - \frac\kappa 2 t(1-t)$.
		
		\textbf{Step 4: Conclusion.}
		Substituting the expression for $\kappa$:
		\begin{align*}
			\frac\kappa 2 t(1-t)
			&= \frac{K - C\|\varepsilon\|_{C^2}}{2}t(1-t)
			\bigl(\Wp^2 - O(\|\varepsilon\|_\infty\Wp^2)\bigr)
			\\
			&= \frac{K}{2}t(1-t)\Wp^2
			- \frac{C\|\varepsilon\|_{C^2}}{2}t(1-t)\Wp^2
			- O(\|\varepsilon\|_\infty\Wp^2)\cdot\frac{K}{2}t(1-t) + O(\|\varepsilon\|_{C^2}^2).
		\end{align*}
		Collecting error terms, all of order $O(\|\varepsilon\|_{C^2}\Wp^2)$:
		\[
		\mathcal{R}_\varepsilon(\rho_t)
		\le (1-t)\mathcal{R}_\varepsilon(\rho_0) + t\mathcal{R}_\varepsilon(\rho_1)
		- \frac{K}{2}t(1-t)\Wp^2(\rho_0,\rho_1)
		+ O(\|\varepsilon\|_{C^2}\Wp^2(\rho_0,\rho_1)). \qedhere
		\]
	\end{proof}

	\subsection{Main equivalence}
	
	\begin{theorem}[Main equivalence] \label{thm:main}
		Under the technical assumptions (small $\|\varepsilon\|_{C^2}$, uniform expansions), the following are equivalent:
		\begin{enumerate}
			\item $\Ric_g + \nabla^2\varepsilon \ge K$ on $M$;
			\item For every $\Wp$-geodesics $(\rho_t)$,
			\[
			\Renyi(\rho_t) \le (1-t)\Renyi(\rho_0) + t\Renyi(\rho_1) - \frac{K}{2}t(1-t)\Wp^2 + O\bigl(\|\varepsilon\|_{C^2}\Wp^2\bigr).
			\]
		\end{enumerate}
	\end{theorem}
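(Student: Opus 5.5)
The plan is to prove the two implications separately, each reduced to results already stated. For (1) $\Rightarrow$ (2) there is nothing new to do: this is exactly Theorem~\ref{thm:converse}. Assuming $\Ric_g+\nabla^2\varepsilon\ge K$, Proposition~\ref{prop:hessian} gives $h''(t)\ge (K-C\|\varepsilon\|_{C^2})\int_M|v_t|^2\rho_t$ for $h(t)=\Renyi(\rho_t)$. The kinetic energy is then compared with $\Wp^2$ through Proposition~\ref{prop:expansion}, and the chord argument is integrated twice. All error terms are collected into $O(\|\varepsilon\|_{C^2}\Wp^2)$.

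For (2) $\Rightarrow$ (1) I would argue by localization, in the spirit of Theorem~\ref{thm:reciproque1}. Fix $x\in M$ and a unit vector $v\in T_xM$, and take the test densities $\rho_0^\delta,\rho_1^\delta$ of Lemma~\ref{lem:entropy_expansion}. These are not smooth, so they must first be approximated by densities satisfying Assumption~\ref{ass:bounded}, using Lemma~\ref{lem:density_approx}. We then pass to the limit $n\to\infty$ in (2), using continuity of $\Renyi$ under $L^1$ convergence with uniform bounds and continuity of $\Wp$.

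Next apply (2) at $t=1/2$ and insert the expansion of Lemma~\ref{lem:Renyi_Wp_expansion}. The terms $\frac12\Renyi(\rho_0^\delta)+\frac12\Renyi(\rho_1^\delta)$ cancel on both sides, leaving
\[
-\tfrac18\bigl(\Ric_x(v,v)+\nabla^2\varepsilon_x(v,v)\bigr)\delta^2+o(\delta^2)\le -\tfrac K8\Wp^2(\rho_0^\delta,\rho_1^\delta)+O(\|\varepsilon\|_{C^2}\Wp^2).
\]
Now substitute Lemma~\ref{lem:Wp_test}: $\Wp^2=\delta^2(1+\bar\varepsilon(x,\delta)\ln\delta)+o(\delta^2)$. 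Divide by $\delta^2$ and let $\delta\to0$. This yields $\Ric_x(v,v)+\nabla^2\varepsilon_x(v,v)\ge K$ up to the admissible error. Since $x$ and $v$ are arbitrary, (1) follows with the tolerance built into the statement.

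The main obstacle is the logarithmic term $\bar\varepsilon(x,\delta)\ln\delta$ in $\Wp^2$, which does not vanish as $\delta\to0$ unless $\varepsilon(x)=0$. It would contaminate the $K$-term with $K\varepsilon(x)\ln\delta$. This is exactly the divergence the weight $e^{\frac n2\varepsilon}$ in $\Renyi$ is designed to cancel. I would first try to match it against the difference $-\frac{n^2}{2}\bigl(\bar\varepsilon(\exp_x(\delta v),\delta)-\bar\varepsilon(x,\delta)\bigr)$ coming from Lemma~\ref{lem:Renyi_ball}. Failing that, I would interpret the $O(\|\varepsilon\|_{C^2}\Wp^2)$ tolerance, under the "uniform expansions" hypothesis, as absorbing $\delta^2\bar\varepsilon\ln\delta$ at fixed scale. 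That amounts to taking $\delta\to0$ jointly with a regime where $\|\varepsilon\|_{C^2}|\ln\delta|$ stays bounded.

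A second point to check is the uniformity of the $o(\delta^2)$ remainder in Lemma~\ref{lem:Renyi_Wp_expansion} with respect to the approximation in $n$. This uniformity is needed so that the two limits can be interchanged.
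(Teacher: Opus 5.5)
\textbf{Verdict: genuine gap, in (2)$\Rightarrow$(1).} Your (1)$\Rightarrow$(2) is exactly the paper's argument (Theorem~\ref{thm:converse}). The failure is at the step ``divide by $\delta^2$ and let $\delta\to0$''.

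The numbers $\Renyi(\rho_0^\delta)$ and $\Renyi(\rho_1^\delta)$ enter Lemma~\ref{lem:Renyi_Wp_expansion} and hypothesis (2) identically, so they cancel. Nothing from Lemma~\ref{lem:Renyi_ball} is left to absorb the logarithm, and the weight $e^{\frac n2\varepsilon}$ never acts on $\Wp^2$. Your first remedy is therefore void. Writing $A:=\Ric_x(v,v)+\nabla^2\varepsilon_x(v,v)$, what remains is
\[
A\ \ge\ K\bigl(1+\bar\varepsilon(x,\delta)\ln\delta\bigr)-C\|\varepsilon\|_{C^2}\bigl|1+\bar\varepsilon(x,\delta)\ln\delta\bigr|+o(1).
\]
For fixed $\varepsilon$ with $\varepsilon(x)\neq0$, the right side is driven by terms of order $|\varepsilon(x)\ln\delta|\to\infty$. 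Letting $\delta\to0$ makes the inequality either vacuous or self-contradictory, so it yields no bound on $A$.

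Your second remedy needs $\|\varepsilon\|_{C^2}|\ln\delta|\to0$; a mere bound $B$ on this product still leaves an error of order $|K|B$. That means $\varepsilon\to0$ along with $\delta$, which is not available when (1) is a condition on a given $\varepsilon$. Working at fixed $\delta$ does not help either, since the $o(\delta^2)$ remainders survive division by $\delta^2$.

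In fact Lemma~\ref{lem:Wp_test} cannot hold in this regime: for $\varepsilon(x)>0$ it makes $\Wp^2$ negative once $\delta$ is below roughly $e^{-1/\varepsilon(x)}$. The logarithm is an artifact of undoing the normalization in Proposition~\ref{prop:expansion} without rescaling $\ln|v_0|$. The Luxemburg gauge is $1$-homogeneous in the velocity, so the scale-correct term is $\ln(|v_0|/F_2)$, which is $O(1)$ for the test densities. Directly:
\begin{itemize}
\item the lower bound follows from Young's inequality $s\le s^{p}/p+(p-1)/p$ together with $\int_0^1\!\int_M|v_t|\rho_t\ge W_1(\rho_0^\delta,\rho_1^\delta)$;
\item the upper bound follows by using the $\Wt$-geodesic as a competitor;
\item together they give $\Wp^2(\rho_0^\delta,\rho_1^\delta)=\delta^2\bigl(1+O(\|\varepsilon\|_\infty)+o(1)\bigr)$.
\end{itemize}
This is also what the paper itself uses for these same densities, via Lemma~\ref{lem:distance_comparison}, in proving Theorem~\ref{thm:reciproque1}. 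With this estimate in place of Lemma~\ref{lem:Wp_test}, your localization does close.

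What it closes to, however, is not (1). The slack $O(\|\varepsilon\|_{C^2}\Wp^2)$ in (2) can only give $\Ric_g+\nabla^2\varepsilon\ge K-C\|\varepsilon\|_{C^2}$. ``(1) with the tolerance built into the statement'' is not (1), which has no tolerance. No argument can remove it at fixed $\varepsilon$. Suppose $\Ric_g+\nabla^2\varepsilon\ge K-C'\|\varepsilon\|_{C^2}$. Then Theorem~\ref{thm:converse} gives (2) with constant $K-C'\|\varepsilon\|_{C^2}$, and the extra $\frac{C'}{2}\|\varepsilon\|_{C^2}t(1-t)\Wp^2$ is absorbed into the $O$-term. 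So (2) holds with $K$, while (1) fails whenever the infimum of $\Ric_g+\nabla^2\varepsilon$ lies strictly below $K$. Moreover $|\nabla^2\varepsilon(v,v)|\le\|\varepsilon\|_{C^2}|v|^2$, so at this precision your conclusion is equivalent to $\Ric_g\ge K-C''\|\varepsilon\|_{C^2}$: the Hessian term is invisible.

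The paper's proof does not localize with $\Renyi$ at all. It invokes Theorem~\ref{thm:reciproque1}, which concerns $H$ and concludes $\Ric_g\ge K-C\|\varepsilon\|_\infty$, and then lets $\|\varepsilon\|_{C^2}\to0$. That is your second way out. It disposes of both the logarithm and the tolerance, but it establishes at best the limiting Lott--Villani statement, not (1) for a given $\varepsilon$.

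The defensible converse is (1) with $K-C\|\varepsilon\|_{C^2}$ in place of $K$. For that statement your route is more to the point than the paper's, since it actually uses the hypothesis on $\Renyi$ rather than on $H$. This assumes the homogeneity bound above, and grants Lemma~\ref{lem:Renyi_Wp_expansion} and the approximation step you flagged.
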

	
	\begin{proof}
		The direct implication is Theorem \ref{thm:converse}. The converse follows from Theorem \ref{thm:reciproque1} by taking $\|\varepsilon\|_{C^2}\to0$ and using the uniformity of the $o(\Wp^2)$ term.
	\end{proof}
	
	\begin{corollary}
		The modified R\'enyi entropy $\Renyi$ is $K$-convex along $\Wp$-geodesics up to an error of order $\|\varepsilon\|_{C^2}\Wp^2$ if and only if the effective Bakry-\'Emery tensor $\Ric_g + \nabla^2\varepsilon$ is bounded below by $K$.
	\end{corollary}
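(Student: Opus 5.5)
The corollary is a restatement of Theorem~\ref{thm:main}, so the plan is to derive it directly from the two implications already established. I will argue each direction separately, and then check that the error terms match the phrase ``up to an error of order $\|\varepsilon\|_{C^2}\Wp^2$''.

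\emph{($\Leftarrow$) Curvature bound implies convexity.} Assume $\Ric_g+\nabla^2\varepsilon\ge K$ on $M$. I invoke Theorem~\ref{thm:converse}. Its proof combines two ingredients:
\begin{itemize}
\item the Hessian lower bound of Proposition~\ref{prop:hessian};
\item the kinetic energy comparison coming from Proposition~\ref{prop:expansion}.
\end{itemize}
Together these give $h''(t)\ge (K-C\|\varepsilon\|_{C^2})\Wp^2(1-O(\|\varepsilon\|_\infty))$ for $h(t)=\Renyi(\rho_t)$. The elementary chord argument then produces the $K$-convexity inequality with remainder $O(\|\varepsilon\|_{C^2}\Wp^2)$. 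This is exactly the statement ``$K$-convex up to an error of order $\|\varepsilon\|_{C^2}\Wp^2$''.

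\emph{($\Rightarrow$) Convexity implies curvature bound.} Assume the almost-convexity inequality holds along every $\Wp$-geodesic. Fix $x\in M$ and a unit vector $v\in T_xM$, and use the ball test densities $\rho_0^\delta,\rho_1^\delta$ at $t=1/2$.
\begin{enumerate}
\item Lemma~\ref{lem:Renyi_Wp_expansion} gives the left-hand side as the midpoint value minus $\frac18(\Ric_x(v,v)+\nabla^2\varepsilon_x(v,v))\delta^2+o(\delta^2)$.
\item The hypothesis gives the upper bound $-\frac K8\Wp^2+O(\|\varepsilon\|_{C^2}\Wp^2)$.
\item Substitute $\Wp^2=\delta^2(1+O(\|\varepsilon\|_\infty\ln\delta))+o(\delta^2)$ from Lemma~\ref{lem:Wp_test}, or alternatively the cruder bound of Lemma~\ref{lem:distance_comparison}.
\item Divide by $\delta^2$ and let $\delta\to0$. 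This yields $\Ric_x(v,v)+\nabla^2\varepsilon_x(v,v)\ge K-C\|\varepsilon\|_{C^2}$.
\end{enumerate}
The exact bound $\ge K$ then follows, as in the paper's proof of Theorem~\ref{thm:main}, by using the uniformity of the remainder and letting the $\|\varepsilon\|_{C^2}$-error be absorbed.

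\emph{Main obstacle.} The delicate point is step 3 above: the logarithmic term $\delta^2\bar\varepsilon\ln\delta$ in $\Wp^2$ is not $o(\delta^2)$ when $\varepsilon(x)\neq0$. I will treat it in two ways:
\begin{itemize}
\item First, I will check that it is cancelled by the $-\frac{n^2}{2}\bar\varepsilon$ terms in Lemma~\ref{lem:Renyi_ball}, which is the design principle behind $\Renyi$.
\item Failing an exact cancellation, I will rely on the phrase ``uniform expansions'' in Theorem~\ref{thm:main} and take that term as part of the admissible $O(\|\varepsilon\|_{C^2}\Wp^2)$ error.
\end{itemize}
With both directions established, the corollary is just the equivalence of items 1 and 2 of Theorem~\ref{thm:main}, rephrased in the language of effective Bakry--\'Emery curvature.
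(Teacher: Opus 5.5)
Your reduction to Theorem~\ref{thm:main} and your ($\Leftarrow$) direction via Theorem~\ref{thm:converse} are the paper's argument. For ($\Rightarrow$), using Lemma~\ref{lem:Renyi_Wp_expansion} on the ball densities is more to the point than the paper's appeal to Theorem~\ref{thm:reciproque1}, which concerns $H$ and only bounds $\Ric_g$. However, ($\Rightarrow$) has two genuine gaps.

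\textbf{Steps 3--4: the $\ln\delta$ term.} Neither of your remedies works.

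\emph{Cancellation.} The terms of Lemma~\ref{lem:Renyi_ball}, in particular $-\frac{n^2}{2}\bar\varepsilon$, which is of order $\delta^0$, sit inside $\Renyi(\rho_0^\delta)$ and $\Renyi(\rho_1^\delta)$. These appear identically on both sides of the midpoint inequality, and Lemma~\ref{lem:Renyi_Wp_expansion} is already written relative to their average. After they cancel, you are left with $-\frac18\bigl(\Ric_x(v,v)+\nabla^2\varepsilon_x(v,v)\bigr)\delta^2+o(\delta^2)\le -\frac K8\Wp^2+O(\|\varepsilon\|_{C^2}\Wp^2)$. The only logarithm is inside $\Wp^2$ on the right, so nothing offsets $\frac K8\delta^2\bar\varepsilon(x,\delta)\ln\delta$.

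\emph{Absorption.} That term is not $O(\|\varepsilon\|_{C^2}\delta^2)$ uniformly in $\delta$. After dividing by $\delta^2$ you are left with $K\bar\varepsilon(x,\delta)\ln\delta$, which diverges when $\varepsilon(x)\neq0$, so the limit yields either nothing or a contradiction. Taken literally, Lemma~\ref{lem:Wp_test} even gives $\Wp^2<0$ for $\delta<e^{-1/\varepsilon(x)}$ when $\varepsilon(x)>0$, so it cannot be used as $\delta\to0$ at fixed $\varepsilon$.

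\emph{Lemma~\ref{lem:distance_comparison}.} It is not available either. It needs Assumption~\ref{ass:bounded} with fixed $a,b$, and its proof uses $|\ln|v_0||\le L_0$, whereas here $\rho_i^\delta\sim\delta^{-n}$ and $|v_0|\approx\delta$. On these very densities it contradicts Lemma~\ref{lem:Wp_test}, so you cannot just pick the convenient one.

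What step 3 actually needs is $\Wp^2(\rho_0^\delta,\rho_1^\delta)=\delta^2(1+O(\|\varepsilon\|_\infty))+o(\delta^2)$, proved directly. This is true. Write $w=|v_t|/\lambda$.
\begin{itemize}
\item Upper bound: the $\Wb$-geodesic, with speeds $\le3\delta$, is a competitor, and $|w^{p(y)}-w^2|\le C\|\varepsilon\|_\infty$ for $0\le w\le 4$. This gives $\Wp\le\Wb(1+C\|\varepsilon\|_\infty)$.
\item Lower bound: $w^{p}\ge 1+p(w-1)$, together with $\int_0^1\!\int_M|v_t|\rho_t\,d\vol_g\,dt\ge W_1(\rho_0^\delta,\rho_1^\delta)=\delta(1+o(1))$, gives $\Wp\ge\delta(1-C\|\varepsilon\|_\infty-o(1))$.
\end{itemize}
The logarithm in Lemma~\ref{lem:Wp_test} comes from un-normalizing Proposition~\ref{prop:expansion} incorrectly. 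Since $F_{p(\cdot)}$ is positively $1$-homogeneous, the correction term involves $\ln(|v_0|/F_2)$, which is $O(\delta)$ here. With this estimate, your step 4 legitimately yields $\Ric_x(v,v)+\nabla^2\varepsilon_x(v,v)\ge K-C\|\varepsilon\|_{C^2}$.

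\textbf{The upgrade to exact $K$.} ``Letting the $\|\varepsilon\|_{C^2}$-error be absorbed'' is not a valid step, and no argument can replace it. The exponent $\varepsilon$ is fixed in the statement. Sending $\|\varepsilon\|_{C^2}\to0$ changes both the hypothesis and the tensor $\Ric_g+\nabla^2\varepsilon$; in the limit you only learn $\Ric_g\ge K$.

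More decisively, apply Theorem~\ref{thm:converse} with $K-c\|\varepsilon\|_{C^2}$ in place of $K$. The extra term $\frac c2\|\varepsilon\|_{C^2}t(1-t)\Wp^2$ is of the allowed order. Hence item~2 of Theorem~\ref{thm:main} already holds whenever $\Ric_g+\nabla^2\varepsilon\ge K-c\|\varepsilon\|_{C^2}$, for any fixed $c$. So for $\varepsilon\neq0$, item~2 cannot imply $\Ric_g+\nabla^2\varepsilon\ge K$.

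What you can prove is the equivalence modulo $O(\|\varepsilon\|_{C^2})$ in the curvature bound. The paper's proof of Theorem~\ref{thm:main} has the same defect, so appealing to it does not close the gap.
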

	
	\section{Applications to Log-Sobolev and Talagrand inequalities}\label{s7}
	
	\subsection{Preliminaries}
	
	Let $\mu = \vol_g$ be the Riemannian volume measure. For a probability density $\rho$ with respect to $\mu$, define the relative entropy
	\[
	\Ent(\rho) = \int_M \rho \log \rho \, d\mu,
	\]
	and the Fisher information
	\[
	\I(\rho) = \int_M \frac{|\grad \rho|^2}{\rho} \, d\mu = 4 \int_M |\grad \sqrt{\rho}|^2 \, d\mu.
	\]
	
	\begin{assumption}[Curvature bound for inequalities] \label{ass:curvature_ineq}
		The Riemannian manifold $(M,g)$ satisfies $\Ric_g \ge K$ for some $K > 0$.
	\end{assumption}
	
	\subsection{Perturbed Log-Sobolev inequality}
	
	\begin{theorem}[Perturbed Log-Sobolev inequality] \label{thm:logsob}
		Under Assumptions \ref{ass:pexp}, \ref{ass:bounded} and \ref{ass:curvature_ineq}, there exists a constant $C > 0$ depending only on the geometry of $M$ such that for any probability density $\rho$ with $\Ent(\rho) < \infty$,
		\[
		\Ent(\rho) \le \frac{1}{2(K - C\|\varepsilon\|_\infty)} \I(\rho) + o(\|\varepsilon\|_\infty).
		\]
	\end{theorem}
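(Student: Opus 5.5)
The plan is to follow the Otto--Villani route (HWI-type argument), using the perturbed convexity of Theorem~\ref{thm:direct1} in place of exact $K$-convexity. We take $\mu=\vol_g$ normalized as a probability measure, so that the uniform density $\mathbf 1$ minimizes $\Ent$ with $\Ent(\mathbf 1)=0$. By density and lower semicontinuity of $\Ent$ and $\I$, it suffices to treat smooth $\rho$ with $a\le\rho\le b$, so that Assumption~\ref{ass:bounded} holds for the pair $(\rho_0,\rho_1)=(\rho,\mathbf 1)$. We then let $(\rho_t)$ be the $\Wp$-geodesic from $\rho$ to $\mathbf 1$, and write $K_\varepsilon:=K-2C\|\varepsilon\|_\infty$, where $C$ is the constant of Theorem~\ref{thm:direct1}.

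\textbf{Step 1: convexity inequality.} Theorem~\ref{thm:direct1} gives, for $t\in(0,1)$,
\[
\frac{H(\rho_t)-H(\rho_0)}{t}\le H(\rho_1)-H(\rho_0)-\tfrac{K_\varepsilon}{2}(1-t)\Wp^2+\frac{o(\|\varepsilon\|_\infty)}{t}.
\]
The error term is only $o(\|\varepsilon\|_\infty)$ and is not proportional to $t$, so it cannot survive the limit $t\to0$ directly. To handle this, I would first use the proof of Theorem~\ref{thm:direct1} to replace the additive error by $t(1-t)\,O(\|\varepsilon\|_\infty^2)$. Steps 2 and 4 of that proof only produce errors through $\Wb(\rho_t,\rho_t^2)$, and this quantity vanishes at $t=0,1$, so the error should carry the factor $t(1-t)$. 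This re-examination of the error term is the main obstacle. If it fails, I would instead apply the inequality at a fixed small $t_\varepsilon=\|\varepsilon\|_\infty^{1/2}$ and absorb the resulting loss into $o(\|\varepsilon\|_\infty)$ using the bound $H\le\log b$.

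\textbf{Step 2: derivative at $t=0$.} Letting $t\to0$ gives
\[
\frac{d}{dt}\Big|_{t=0}H(\rho_t)\le -\Ent(\rho)-\tfrac{K_\varepsilon}{2}\Wp^2(\rho,\mathbf 1)+o(\|\varepsilon\|_\infty).
\]
By the first-variation formula (as in the proof of Lemma~\ref{lem:entropy_regularity}) together with Cauchy--Schwarz,
\[
\frac{d}{dt}\Big|_{t=0}H(\rho_t)=\int\nabla\log\rho\cdot v_0\,\rho\ \ge -\sqrt{\I(\rho)}\,\|v_0\|_{L^2(\rho)}.
\]
Here $v_0$ is the initial velocity of the $\Wp$-geodesic. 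By Proposition~\ref{prop:expansion} and Lemma~\ref{lem:distance_comparison}, $\|v_0\|_{L^2(\rho)}=\Wp(1+O(\|\varepsilon\|_\infty))$.

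\textbf{Step 3: HWI and optimization.} Combining Steps 1 and 2 yields the perturbed HWI inequality
\[
\Ent(\rho)\le \sqrt{\I(\rho)}\,\Wp(1+O(\|\varepsilon\|_\infty))-\tfrac{K_\varepsilon}{2}\Wp^2+o(\|\varepsilon\|_\infty).
\]
Applying Young's inequality $ab\le \frac{a^2}{2K_\varepsilon}+\frac{K_\varepsilon b^2}{2}$ with $a=\sqrt{\I}(1+O(\|\varepsilon\|_\infty))$ and $b=\Wp$ cancels the $\Wp^2$ term. This gives $\Ent(\rho)\le \frac{1}{2K_\varepsilon}\I(\rho)(1+O(\|\varepsilon\|_\infty))+o(\|\varepsilon\|_\infty)$. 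The factor $(1+O(\|\varepsilon\|_\infty))$ is then absorbed by enlarging $C$, since $\frac{1+c\eta}{K-2C\eta}\le\frac1{K-C'\eta}$ for small $\eta=\|\varepsilon\|_\infty$. This yields the stated inequality with a constant depending only on $a$, $b$, $K$ and the geometry of $M$.
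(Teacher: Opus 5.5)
You have a genuine gap, and it sits exactly where you flag the ``main obstacle'' in Step 1. Neither of your two remedies closes it.

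In the proof of Theorem~\ref{thm:direct1}, the only error not already multiplied by $t(1-t)$ is the transfer term $|H(\rho_t)-H(\rho_t^2)|\le L\,\Wb(\rho_t,\rho_t^2)\le LC_1\|\varepsilon\|_\infty\Wb(\rho_0,\rho_1)$. This term is \emph{first} order in $\|\varepsilon\|_\infty$. The paper disposes of it only by dividing by $\Wp\ge c_0$ and silently inserting a factor $t(1-t)$; for $\rho$ close to $\mathbf{1}$ no such $c_0$ exists.

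The term cannot be upgraded to $t(1-t)\,O(\|\varepsilon\|_\infty^2)$. Its $t$-derivative at $0$ is $\int_M\nabla\log\rho\cdot(v_0^\varepsilon-v_0^2)\,\rho\,d\vol_g$, where $v_0^\varepsilon,v_0^2$ are the initial velocities of the $\Wp$- and $\Wb$-geodesics. Changing the cost from $|v|^2$ to $|v|^{2+\varepsilon(x)}$ generically moves the initial velocity at first order.

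Even granting the best plausible bound $t(1-t)\,O(\|\varepsilon\|_\infty)\Wb$, letting $t\to0$ adds a term $C\|\varepsilon\|_\infty\Wp$ to your HWI inequality, linear in $\Wp$. Young's inequality then leaves an additive error of exact order $\|\varepsilon\|_\infty\sqrt{\I(\rho)}$. This cannot be absorbed into $\frac{\I(\rho)}{2(K-C\|\varepsilon\|_\infty)}+o(\|\varepsilon\|_\infty)$ with $C$ independent of $\rho$, since $\sup_{s\ge0}(s-Cs^2)=\frac1{4C}>0$.

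The fallback with $t_\varepsilon=\|\varepsilon\|_\infty^{1/2}$ is worse. Dividing the additive $o(\|\varepsilon\|_\infty)$ by $t_\varepsilon$ leaves $o(\|\varepsilon\|_\infty^{1/2})$. Replacing the difference quotient by the derivative at $t=0$ costs another $O(t_\varepsilon)$. The bound $H\le\log b$ controls neither.

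Your density reduction also has two problems. It runs the wrong way for $\I$: lower semicontinuity gives $\I(\rho)\le\liminf\I(\rho_n)$, whereas you need $\limsup\I(\rho_n)\le\I(\rho)$. And your constants depend on $a,b$, which degenerate along the approximation, so you do not get a $C$ depending only on the geometry.

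The missing observation is that neither side of the inequality involves $\varepsilon$ or $\Wp$ except through the constant. There is no reason to use $\Wp$-geodesics at all. Run your own HWI argument along the $\Wb$-geodesic from $\rho$ to $\mathbf{1}$, where Theorem~\ref{thm:lott_villani} holds with no remainder. This gives $\Ent(\rho)\le\sqrt{\I(\rho)}\,\Wb(\rho,\mathbf{1})-\frac K2\Wb^2(\rho,\mathbf{1})\le\frac1{2K}\I(\rho)$ (Otto--Villani; equivalently the Bakry--\'Emery log-Sobolev inequality, valid for every $\rho$). Since $\frac1{2K}\le\frac1{2(K-C\|\varepsilon\|_\infty)}$ whenever $0\le C\|\varepsilon\|_\infty<K$, the statement follows with zero remainder and any $C\ge0$. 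Only your normalization $\vol_g(M)=1$ is genuinely needed.

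For comparison, the paper gives no argument of its own here. The proof environment placed after this theorem is titled and written as the proof of Theorem~\ref{thm:talagrand}. The only pointer is the remark that $\varepsilon\equiv0$ recovers the classical inequality.
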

	\begin{proof}[Proof of Theorem~\ref{thm:talagrand}]
		Let $\mu = \vol_g$ be normalized so that $\vol_g(M)=1$ and $H(\mu)=0$.
		Let $(\rho_t)_{t\in[0,1]}$ be the $\Wp$-geodesics from $\rho_0=\rho$ to $\rho_1=\mu$.
		By Theorem~\ref{thm:direct1} (stability of $K$-convexity):
		\begin{equation}\label{eq:Tal_convexity}
			H(\rho_t) \le (1-t)H(\rho) + tH(\mu)
			- \frac{K - C\|\varepsilon\|_\infty}{2}t(1-t)\Wp^2(\rho,\mu)
			+ o(\|\varepsilon\|_\infty)
		\end{equation}
		for all $t\in[0,1]$.  Since $H(\mu)=0$, this becomes:
		\begin{equation}\label{eq:Tal_convexity2}
			H(\rho_t) \le (1-t)H(\rho)
			- \frac{K-C\|\varepsilon\|_\infty}{2}t(1-t)\Wp^2(\rho,\mu)
			+ o(\|\varepsilon\|_\infty).
		\end{equation}
		The relative entropy satisfies $\Ent(\rho) := H(\rho) - H(\mu) = H(\rho) \ge 0$
		(by Jensen's inequality applied to the convex function $u\mapsto u\ln u$ and
		the normalization $\int\rho\,d\vol_g = 1 = \vol_g(M)$, which gives $H(\rho)\ge0$
		for probability densities on a space of unit volume).
		
		Dividing~\eqref{eq:Tal_convexity2} by $t$ and taking $t\to 0^+$:
		\[
		\limsup_{t\to 0^+}\frac{H(\rho_t)}{t} \le H(\rho) - \frac{K-C\|\varepsilon\|_\infty}{2}\Wp^2(\rho,\mu) + o(\|\varepsilon\|_\infty).
		\]
		On the other hand, along any $\Wp$-geodesics from $\rho$ to $\mu$,
		$H(\rho_t) \ge 0$ for all $t$ (again by Jensen on the unit-volume space), so:
		\[
		0 \le H(\rho_0) = H(\rho) \le H(\rho) + 0,
		\]
		and evaluating~\eqref{eq:Tal_convexity2} at $t=1$ gives:
		\[
		0 = H(\mu) \le 0 - \frac{K-C\|\varepsilon\|_\infty}{2}\cdot 0 \cdot \Wp^2 + o(\|\varepsilon\|_\infty),
		\]
		which is consistent. To extract the Talagrand bound, evaluate at $t=1$ along a
		reparametrized geodesic or use directly the $t\to 0^+$ estimate: since
		$H(\rho_t) \ge 0$, \eqref{eq:Tal_convexity2} at $t=1$ gives
		\[
		0 \le H(\rho) - \frac{K-C\|\varepsilon\|_\infty}{2}\Wp^2(\rho,\mu) + o(\|\varepsilon\|_\infty),
		\]
		which rearranges to
		\[
		\Wp^2(\rho,\mu) \le \frac{2}{K-C\|\varepsilon\|_\infty}\,H(\rho) + o(\|\varepsilon\|_\infty)
		= \frac{2}{K-C\|\varepsilon\|_\infty}\,\Ent(\rho) + o(\|\varepsilon\|_\infty). \qedhere
		\]
	\end{proof}

	\begin{remark}
		When $\varepsilon \equiv 0$, we recover the classical Log-Sobolev inequality $\Ent(\rho) \le \frac{1}{2K} \I(\rho)$.
	\end{remark}
	
	\subsection{Perturbed Talagrand inequality}
	
	\begin{theorem}[Perturbed Talagrand inequality] \label{thm:talagrand}
		Under Assumptions \ref{ass:pexp}, \ref{ass:bounded} and \ref{ass:curvature_ineq}, there exists a constant $C > 0$ such that for any probability density $\rho$ with $\Ent(\rho) < \infty$,
		\[
		\Wp^2(\rho, \mu) \le \frac{2}{K - C\|\varepsilon\|_\infty} \Ent(\rho) + o(\|\varepsilon\|_\infty).
		\]
	\end{theorem}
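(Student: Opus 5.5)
The plan is to deduce the perturbed Talagrand inequality from the stability of $K$-convexity, Theorem~\ref{thm:direct1}, applied along the $\Wp$-geodesic joining $\rho$ to $\mu=\vol_g$. The inequality must then be tested in the limit $t\to1^-$, where the target has zero entropy; evaluating it at $t=1$ itself gives nothing. We normalize $\vol_g(M)=1$, so $H(\mu)=0$ and $\Ent(\rho)=H(\rho)$.

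First, we reduce to densities satisfying Assumption~\ref{ass:bounded}. We regularize $\rho$ by the heat flow, capped away from $0$ and $\infty$. Along such approximations both $\Ent$ and $\Wp$ converge, by lower semicontinuity together with the $L^1$ compactness of admissible pairs. So it suffices to treat smooth $\rho$ with $a\le\rho\le b$.

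Let $(\rho_t)$ be the $\Wp$-geodesic from $\rho_0=\mu$ to $\rho_1=\rho$. We orient it this way so that the reference measure sits at $t=0$. Theorem~\ref{thm:direct1}, with the error constants collected as in its Step~4, gives
\[
H(\rho_t)\le tH(\rho)-\frac{K-C\|\varepsilon\|_\infty}{2}\,t(1-t)\Wp^2(\rho,\mu)+o(\|\varepsilon\|_\infty).
\]
Since $\mu$ is the unique minimizer of $H$ on the unit-volume space, the left side is $\ge0$. Dividing by $t$ and letting $t\to0^+$ then yields
\[
0\le H(\rho)-\frac{K-C\|\varepsilon\|_\infty}{2}\Wp^2(\rho,\mu)+\liminf_{t\to 0^+} t^{-1}o(\|\varepsilon\|_\infty).
\]
Rearranging, and using $K-C\|\varepsilon\|_\infty>0$ for $\|\varepsilon\|_\infty$ small, gives the claimed bound.

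The main obstacle is exactly the error term. The $o(\|\varepsilon\|_\infty)$ in Theorem~\ref{thm:direct1} is not multiplied by $t(1-t)$, so dividing by $t$ destroys it. I would first try to revisit the assembly step of Theorem~\ref{thm:direct1} to show that its error is of the form $t(1-t)\,O(\|\varepsilon\|_\infty)$. The ingredients for this are as follows.
\begin{itemize}
\item The entropy-transfer term $|H(\rho_t)-H(\rho_t^2)|$ vanishes at $t=0$ and $t=1$ and is Lipschitz in $t$. To see this, use Lemma~\ref{lem:entropy_regularity} and the fact that the geodesics agree at the endpoints, with $\frac{d}{dt}\Wb(\rho_t,\rho_t^2)$ controlled by Lemma~\ref{lem:energy_estimate}.
\item This suggests a bound $C\|\varepsilon\|_\infty\min(t,1-t)\Wb$, which is comparable to $t(1-t)$ up to a constant.
\end{itemize}

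If that refinement fails, the fallback is the Otto--Villani route. We would use the $\Wb$ Talagrand inequality $\Wb^2\le\frac2K\Ent$, which holds by Lott--Villani, and then convert it with Lemma~\ref{lem:distance_comparison}: $\Wp^2\le(1+C\|\varepsilon\|_\infty)\Wb^2\le\frac{2}{K-C'\|\varepsilon\|_\infty}\Ent(\rho)$. This fallback is cleaner and in fact gives the result with no additive error at all.
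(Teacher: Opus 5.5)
Your fallback is correct and takes a genuinely different route from the paper's. You should make it the main argument.

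\textbf{Comparison with the paper.} The paper's proof is your first route. It takes the $\Wp$-geodesic from $\rho_0=\rho$ to $\rho_1=\mu$, inserts the Boltzmann convexity estimate of Theorem~\ref{thm:direct1} (cited there as Theorem~\ref{thm:converse}), uses $H(\rho_t)\ge 0$, and lets $t\to0^+$. Your diagnosis of why this cannot close is right:
\begin{itemize}
\item With that orientation, $t\to0^+$ only gives $0\le\Ent(\rho)+o(\|\varepsilon\|_\infty)$.
\item The limit that actually isolates $\Wp^2$ is dividing by $1-t$ and letting $t\to1^-$ (equivalently, your orientation with division by $t$).
\item That limit divides the additive $o(\|\varepsilon\|_\infty)$ remainder, which carries no factor $t(1-t)$, by a vanishing quantity.
\end{itemize}
Your fallback avoids $\Wp$-geodesics entirely. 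The classical bound $\Wb^2(\rho,\mu)\le\frac{2}{K}\Ent(\rho)$ follows from Theorem~\ref{thm:lott_villani} by the same division-by-$t$ argument, now with no remainder. Lemma~\ref{lem:distance_comparison} applies to $\rho$ and $\mu\equiv 1$, since both satisfy Assumption~\ref{ass:bounded} (necessarily $a\le 1\le b$). It gives $\Wp^2\le(1+C\|\varepsilon\|_\infty)\Wb^2\le\frac{2}{K-CK\|\varepsilon\|_\infty}\Ent(\rho)$, using $(1+x)(1-x)\le 1$. This is shorter and yields the statement with no additive error, which is strictly stronger than claimed. It rests only on the static distance comparison. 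What the paper's route would offer, if its convexity estimate had a remainder vanishing at the endpoints, is an intrinsic derivation of Talagrand from displacement convexity in the variable-exponent space. Your argument does not provide that, but nothing in the statement requires it.

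\textbf{Parts to cut rather than repair.} The suggested refinement of Theorem~\ref{thm:direct1} does not follow from the cited lemmas, for two reasons.
\begin{itemize}
\item Lemma~\ref{lem:energy_estimate} bounds $\frac{d}{dt}\Wb^2(\rho_t,\rho_t^2)$, not $\frac{d}{dt}\Wb(\rho_t,\rho_t^2)$. This only gives $\Wb(\rho_t,\rho_t^2)=O(\sqrt{t\,\|\varepsilon\|_\infty})$ near $t=0$, not $O(t\,\|\varepsilon\|_\infty)$.
\item Even granting $|H(\rho_t)-H(\rho_t^2)|\le C\|\varepsilon\|_\infty\min(t,1-t)\,\Wb(\rho,\mu)$, this term is linear in the distance. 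After dividing by $t$ you keep an additive $C\|\varepsilon\|_\infty\Wb(\rho,\mu)$, which is $O(\|\varepsilon\|_\infty)$ rather than $o(\|\varepsilon\|_\infty)$. Absorbing it would need a lower bound on $\Wp(\rho,\mu)$, which is not uniform in $\rho$.
\end{itemize}
The heat-flow reduction is also unnecessary, since the theorem is stated under Assumption~\ref{ass:bounded}. It would not work as written either: the constant in Lemma~\ref{lem:distance_comparison} depends on $a,b$ and degenerates along approximations with $a_n\to 0$, $b_n\to\infty$.
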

	
	\begin{proof}
		Let $(\rho_t)_{t\in[0,1]}$ be the $\Wp$-geodesics connecting $\rho_0 = \rho$ and $\rho_1 = \mu$. By Theorem \ref{thm:converse}, the entropy satisfies
		\[
		H(\rho_t) \le (1-t)\Ent(\rho) - \frac{K - C\|\varepsilon\|_\infty}{2} t(1-t) \Wp^2(\rho, \mu) + o(\|\varepsilon\|_\infty).
		\]
		
		Since $H(\rho_t) \ge 0$ (entropy is nonnegative for probability measures), taking $t \to 0^+$ gives
		\[
		0 \le \Ent(\rho) - \frac{K - C\|\varepsilon\|_\infty}{2} \Wp^2(\rho, \mu) + o(\|\varepsilon\|_\infty),
		\]
		which rearranges to the desired inequality.
	\end{proof}
	
	\subsection{Unified theorem}
	
	\begin{theorem}[Unified stability of functional inequalities] \label{thm:unified}
		Under Assumptions \ref{ass:pexp}, \ref{ass:bounded} and \ref{ass:curvature_ineq}, there exists a constant $C > 0$ such that for any probability density $\rho$ with $\Ent(\rho) < \infty$,
		\begin{align}
			\Ent(\rho) &\le \frac{1}{2(K - C\|\varepsilon\|_\infty)} \I(\rho) + o(\|\varepsilon\|_\infty), \\
			\Wp^2(\rho, \mu) &\le \frac{2}{K - C\|\varepsilon\|_\infty} \Ent(\rho) + o(\|\varepsilon\|_\infty).
		\end{align}
		Consequently, these inequalities are robust under small perturbations of the transport exponent.
	\end{theorem}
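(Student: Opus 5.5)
The statement to prove is Theorem~\ref{thm:unified}. It is essentially the conjunction of Theorems~\ref{thm:logsob} and~\ref{thm:talagrand}, so my plan is to prove the two inequalities separately from the stability result Theorem~\ref{thm:direct1}, with the same constant $C$ (the maximum of the two constants obtained).

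\textbf{Talagrand part.} I normalize $\vol_g(M)=1$, so that $H(\mu)=0$ and $H\ge 0$ on probability densities by Jensen. Take $(\rho_t)$ to be the $\Wp$-geodesic from $\rho_0=\rho$ to $\rho_1=\mu$. I will apply Theorem~\ref{thm:direct1}, not the R\'enyi statement, after first approximating $\rho$ by densities satisfying Assumption~\ref{ass:bounded} as in Lemma~\ref{lem:density_approx}, and pass to the limit at the end. This gives
\[
0\le H(\rho_t)\le (1-t)H(\rho)-\tfrac{K-C\|\varepsilon\|_\infty}{2}t(1-t)\Wp^2(\rho,\mu)+o(\|\varepsilon\|_\infty).
\]
If I divide by $t(1-t)$ and let $t\to 1^-$ as it stands, the leading term $H(\rho)/t$ does not vanish, so I would exploit the local slope instead. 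Rewrite the inequality as $H(\rho_t)-(1-t)H(\rho)\le -\tfrac{K'}{2}t(1-t)\Wp^2+o$, where $K'=K-C\|\varepsilon\|_\infty$. Then, near $t=1$, use $H(\rho_t)\ge 0$ together with the bound $H(\rho_t)\le L\,\Wb(\rho_t,\mu)\lesssim (1-t)$ from Lemma~\ref{lem:entropy_regularity}. The step I expect to fail is making the coefficient of $t(1-t)$ survive with the sharp factor $2$. The standard route is the HWI inequality: combining convexity with the first variation of $H$ at $t=0$ gives $H(\rho)\le \Wp\sqrt{\I(\rho)}-\tfrac{K'}{2}\Wp^2+o$. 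I would first try to obtain this by computing $\frac{d}{dt}H(\rho_t)|_{t=0}=\int \nabla\log\rho\cdot v_0\,\rho$ and using Cauchy--Schwarz with $\|v_0\|_{L^2(\rho)}=\Wp(1+O(\|\varepsilon\|_\infty))$, which follows from Proposition~\ref{prop:expansion} and Lemma~\ref{lem:distance_comparison}. Talagrand then follows from HWI by the usual route.

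\textbf{Log-Sobolev part.} From the same HWI inequality, maximizing the right-hand side over $\Wp\ge 0$ gives $H(\rho)\le \I(\rho)/(2K')+o(\|\varepsilon\|_\infty)$. Since $H(\rho)=\Ent(\rho)$, this is exactly the first inequality.

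\textbf{Main obstacle.} The main difficulty is uniformity of the $o(\|\varepsilon\|_\infty)$ remainder when $\rho$ is only assumed to have finite entropy rather than to satisfy Assumption~\ref{ass:bounded}. The remainder in Theorem~\ref{thm:direct1} depends on $a,b$ and on the lower bound $\Wp\ge c_0$. I would handle this by first proving the statement for bounded smooth $\rho$, and then passing to the limit by lower semicontinuity of $\Wp$ and continuity of $\Ent$ and $\I$ under mollification. I would allow the remainder to depend on the approximating class if genuine uniformity cannot be achieved.
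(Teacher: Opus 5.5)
There is a genuine gap, and it is not the one you single out. Every limiting step in your plan divides the inequality of Theorem~\ref{thm:direct1} by something that goes to zero. This is true both for dividing by $1-t$ (or $t(1-t)$) and letting $t\to1^-$, and for dividing by $t$ and letting $t\to0^+$ to extract the first variation that HWI needs. But the remainder $o(\|\varepsilon\|_\infty)$ in that theorem is additive and independent of $t$. In its proof it is the transfer term $LC_1\|\varepsilon\|_\infty\Wb(\rho_0,\rho_1)$, which comes from $\sup_t\Wb(\rho_t,\rho_t^2)\le C_1\|\varepsilon\|_\infty\Wb(\rho_0,\rho_1)$, and nothing makes it vanish at the endpoints. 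After division it becomes $o(\|\varepsilon\|_\infty)/t$ or $o(\|\varepsilon\|_\infty)/(1-t)$ and blows up. Letting $t$ depend on $\varepsilon$ only trades this for a guaranteed error $o(\|\varepsilon\|_\infty^{1/2})$, not $o(\|\varepsilon\|_\infty)$. So neither your HWI inequality nor Talagrand follows as sketched, and the log-Sobolev step inherits the same defect.

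What you flag instead is harmless. Without a remainder, dividing $0\le(1-t)H(\rho)-\frac{K'}{2}t(1-t)\Wp^2$ by $1-t$ and letting $t\to1$ gives Talagrand with the sharp factor $2$ at once. The term $H(\rho)/t\to H(\rho)$ is exactly the right-hand side you want, not a term that should vanish. Also, the HWI inequality you write (with $\rho_0=\rho$, $\rho_1=\mu$) bounds $H(\rho)$ from above, so it cannot yield Talagrand. You would need the version started at $\mu$ (using $\I(\mu)=0$), which is again an endpoint-slope argument with the same problem.

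The way out is to avoid Theorem~\ref{thm:direct1} altogether.

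\textbf{Log-Sobolev.} The first inequality contains no $\Wp$. Since $\frac{1}{2(K-C\|\varepsilon\|_\infty)}\ge\frac{1}{2K}$ once $C\|\varepsilon\|_\infty<K$, it follows with zero remainder from the classical Bakry--\'Emery log-Sobolev inequality under $\Ric_g\ge K>0$ (with $\vol_g(M)=1$).

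\textbf{Talagrand.} Combine the classical Otto--Villani inequality $\Wb^2(\rho,\mu)\le\frac{2}{K}\Ent(\rho)$ with Lemma~\ref{lem:distance_comparison}:
\[
\Wp^2(\rho,\mu)\le(1+C\|\varepsilon\|_\infty)\,\Wb^2(\rho,\mu)\le\frac{2(1+C\|\varepsilon\|_\infty)}{K}\,\Ent(\rho)\le\frac{2}{K-CK\|\varepsilon\|_\infty}\,\Ent(\rho),
\]
using $(1+c)(1-c)\le1$. This holds for $\rho$ satisfying Assumption~\ref{ass:bounded}. The uniformity issue you raise for general finite-entropy $\rho$ is then confined to the constant in Lemma~\ref{lem:distance_comparison}.

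\textbf{Comparison with the paper.} The paper does not use HWI. It argues only the Talagrand half, directly from Theorem~\ref{thm:direct1} with $H(\mu)=0$ and $H(\rho_t)\ge0$. Its endpoint step has the same flaw: at $t=1$ both $(1-t)H(\rho)$ and $t(1-t)\Wp^2$ vanish, and as $t\to0^+$ the inequality is vacuous. It gives no separate argument for the log-Sobolev half, so it offers no repair for your gap.
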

	
	\section{Connection with curvature-dimension conditions}\label{s8}
	
	\begin{remark}[Link with $\CD(K,N)$]
		The curvature-dimension condition $\CD(K,N)$ \cite{Sturm2006a, Sturm2006b} is a synthetic notion of lower Ricci curvature bound and upper dimension bound. For a metric measure space $(X,d,\mathfrak{m})$, $\CD(K,N)$ implies that the Boltzmann entropy $H$ is $K$-convex along $\Wb$ geodesics, provided the space is essentially non-branching.
		
		In our setting, the variable exponent Wasserstein space $(\mathcal{P}(M), \Wp, \vol_g)$ is a metric measure space. Theorem \ref{thm:converse} shows that if $\Ric_g \ge K$, then $H$ is $(K - C\|\varepsilon\|_\infty)$-convex along $\Wp$ geodesics. This suggests that the effective curvature bound in the variable exponent space is $K - C\|\varepsilon\|_\infty$.
		
		Conversely, Theorem \ref{thm:reciproque1} shows that if $H$ is $K$-convex along $\Wp$ geodesics, then $\Ric_g \ge K - C\|\varepsilon\|_\infty$. This establishes an equivalence similar to the Lott-Villani theorem, up to a small perturbation proportional to $\|\varepsilon\|_\infty$.
		
		These results indicate that the $\CD(K,N)$ condition is robust under perturbations of the transport metric, at least for small perturbations.
	\end{remark}
	
	\section{Analytical and numerical illustrations}\label{s9}
	
	\subsection{Exact computation on the circle $\mathbb{S}^1$}
	
	\subsubsection{Setup}
	
	We consider $M = \mathbb{S}^1 = \mathbb{R}/2\pi\mathbb{Z}$ with flat metric $g=d\theta^2$,
	so that $\Ric_g \equiv 0$ and $K=0$.  We take
	\[
	p(\theta) = 2 + \varepsilon\cos\theta, \quad \varepsilon = 0.1,
	\]
	and the endpoint densities
	\[
	\rho_0(\theta) = \frac{1+\alpha\cos\theta}{2\pi},
	\quad
	\rho_1(\theta) = \frac{1-\alpha\cos\theta}{2\pi},
	\quad \alpha = 0.5.
	\]
	
	\subsubsection{Analytical reference: $\Wp^2$-geodesics}
	
	The unique $\Wp$-geodesic on $\mathbb{S}^1$ connecting $\rho_0$ and $\rho_1$
	is given by $\rho_t = (T_t)_\#\rho_0$ where $T_t(x) = x + t(T_1(x)-x)$ and
	$T_1$ is the monotone rearrangement. For the densities above, the cumulative
	distribution functions satisfy
	\[
	F_0(\theta) = \frac{\theta + \alpha\sin\theta}{2\pi},
	\quad F_1(\theta) = \frac{\theta - \alpha\sin\theta}{2\pi},
	\]
	and $T_1 = F_1^{-1}\circ F_0$.  A first-order expansion in $\alpha$ gives
	$T_1(\theta) \approx \theta + 2\alpha\sin\theta$, so
	$\rho_t(\theta) \approx \frac{1+(1-2t)\alpha\cos\theta}{2\pi}$.
	
	The Boltzmann entropy can then be computed explicitly to first order in $\alpha$:
	\begin{align}
		H(\rho_t)
		&= \int_0^{2\pi}\rho_t\log\rho_t\,\frac{d\theta}{2\pi}
		\notag\\
		&= -\log(2\pi) + \int_0^{2\pi}\frac{1+(1-2t)\alpha\cos\theta}{2\pi}
		\log\!\left(1+(1-2t)\alpha\cos\theta\right)\frac{d\theta}{2\pi} + O(\alpha^2)
		\notag\\
		&= -\log(2\pi) - \frac{(1-2t)^2\alpha^2}{4} + O(\alpha^3),
		\label{eq:H_S1_analytic}
	\end{align}
	using $\int_0^{2\pi}\cos^2\theta\,d\theta = \pi$.  This is a concave function of
	$t$ (as expected since $\Ric_{g} \equiv 0$ on $\mathbb{S}^1$, so $K=0$ and entropy
	is $0$-convex, i.e.\ concave is not excluded—here it is actually convex
	since $\frac{d^2H}{dt^2} = -\alpha^2 < 0$, which reflects the well-known fact
	that entropy is \emph{concave} along $W_2$-geodesics, with equality only at $K=0$).
	
	\begin{remark}
		Note that the Lott-Villani theorem on $\mathbb{S}^1$ with $K=0$ gives
		$H(\rho_t) \le (1-t)H(\rho_0) + tH(\rho_1)$, i.e.\ $H$ is $0$-convex
		(it lies \emph{below} the chord).  Formula~\eqref{eq:H_S1_analytic} confirms
		this: $H(\rho_t) \le \frac12(H(\rho_0)+H(\rho_1))$, with equality at $t=1/2$.
	\end{remark}
	
	\subsubsection{Perturbation estimate: $\Wp$-geodesics}
	
	By Theorem~\ref{thm:direct1} with $K=0$ and $\|\varepsilon\|_\infty = 0.1$:
	\[
	H(\rho_t^{\varepsilon})
	\le (1-t)H(\rho_0) + tH(\rho_1) + C\cdot 0.1\cdot t(1-t)\Wp^2(\rho_0,\rho_1).
	\]
	The $\Wb$-distance for these densities is
	$\Wb^2(\rho_0,\rho_1) = \int_0^{2\pi}(T_1-\mathrm{id})^2\rho_0\,d\theta
	\approx \frac{2\alpha^2}{\pi}$ to leading order.
	By Proposition~\ref{prop:expansion},
	$\Wp^2 = \Wb^2(1 + O(\|\varepsilon\|_\infty))
	\approx \frac{2\alpha^2}{\pi}(1 + O(0.1))$.
	
	The difference between the $\Wp$ and $\Wb$ entropy curves is therefore bounded by
	\[
	|H(\rho_t^{\varepsilon}) - H(\rho_t^2)|
	\le C\cdot 0.1 \cdot \frac{2\alpha^2}{\pi} \cdot \frac{1}{4}
	= \frac{C\alpha^2}{20\pi}
	\approx 0.004 C,
	\]
	for $\alpha=0.5$.  This small shift (of order $0.4\%$ relative to $H(\rho_0)$)
	is consistent with the qualitative picture described below.
	
	\subsubsection{Numerical results}
	
	The following figure was produced using the above analytical formula for the
	$\Wb$ curve and the perturbative estimate for the $\Wp$ curve.
	Both curves are computed to order $O(\alpha^2)$.
	
	\begin{figure}[h!]
		\centering
		\begin{tikzpicture}
			\begin{axis}[
				xlabel={$t$},
				ylabel={$H(\rho_t) + \log(2\pi)$},
				grid=both,
				legend pos=north east,
				width=0.72\textwidth,
				height=0.38\textwidth,
				ymin=-0.070, ymax=0.005,
				xtick={0,0.25,0.5,0.75,1},
				]
				
				\addplot[blue, thick, domain=0:1, samples=50]
				{-(1-2*x)^2 * 0.25 / 4};
				\addlegendentry{$\Wb$}
				
				\addplot[red, dashed, thick, domain=0:1, samples=50]
				{-(1-2*x)^2 * 0.25 / 4 + 0.1 * x*(1-x) * 2*0.25/3.14159};
				\addlegendentry{$\Wp$}
					\end{axis}
		\end{tikzpicture}
		\caption{Boltzmann entropy (shifted by $\log(2\pi)$) along $\Wb$ and
			$\mathcal{W}_{p(\theta)}$ geodesics on $\mathbb{S}^1$.  Both curves are computed
			analytically to order $O(\alpha^2)$ with $\alpha=0.5$, $\varepsilon=0.1$.
			The upward shift of the $\Wp$ curve is consistent with
			$K - C\|\varepsilon\|_\infty = 0 - C\cdot 0.1$.}
		\label{fig:S1}
	\end{figure}
	
	\subsection{Remark on the torus $\mathbb{T}^2$}
	
	A similar analytical computation can be performed on $\mathbb{T}^2 = \mathbb{S}^1\times\mathbb{S}^1$ with product metric (also flat, $K=0$), using tensor products of the $\mathbb{S}^1$ densities.  By the product structure, $H(\rho_t^{\mathbb{T}^2}) = 2H(\rho_t^{\mathbb{S}^1})$ up to a constant, and the perturbation estimate scales accordingly.  We omit the details.
	
	\section{Limitations and open problems}\label{s10}
	
	\subsection{Technical limitations}
	
	\begin{enumerate}
		\item \textbf{Smallness of $\varepsilon$}: Our results require $\|\varepsilon\|_\infty$ to be sufficiently small. The constants in the estimates depend on the geometry of $M$ and become large as $\varepsilon$ approaches the threshold.
		
		\item \textbf{Regularity assumptions}: The existence theory for $\Wp$ geodesics relies on the log-Hölder continuity of $p$ and the boundedness of densities. Relaxing these assumptions would require more sophisticated techniques.
		
		\item \textbf{Compactness of $M$}: Compactness of $M$ is used in several compactness arguments and in the uniform bounds for velocities. Extending to non-compact manifolds would require additional decay conditions.
	\end{enumerate}
	
	\subsection{Open problems}
	
	\begin{enumerate}
		\item \textbf{Optimal constants}: Determine the optimal constant $C$ in the perturbation estimate. This would require a finer analysis of the second variation formula.
		
		\item \textbf{Finite $N$}: Extend the results to the curvature-dimension condition $\CD(K,N)$. The presence of the dimension parameter $N$ would introduce additional terms involving the Laplacian of the density.
		
		\item \textbf{Non-compact manifolds}: Relax the compactness assumption. This would require controlling the behavior of geodesics at infinity and possibly imposing growth conditions on $\varepsilon$.
		
		\item \textbf{More general costs}: Extend the analysis to costs $c(x,v) = |v|^{p(x)}$ with $p(x)$ not necessarily close to $2$. This would involve a more sophisticated expansion and possibly the appearance of higher-order corrections.
		
		\item \textbf{Sharpness}: Prove that the modified R\'enyi entropy is the unique entropy (up to trivial modifications) that yields the exact equivalence. This would involve analyzing the space of admissible entropies and their expansion properties.
		
		\item \textbf{Discrete settings}: Investigate analogous results on graphs or metric measure spaces where the variable exponent structure appears naturally.
	\end{enumerate}
	
	\section{Conclusion}\label{s11}
	
	We have established a rigorous stability result for the $K$-convexity of entropy under perturbations of the transport cost from $|v|^2$ to $|v|_g^{p(x)}$ with $p(x)$ close to $2$. Key technical contributions include:
	
	\begin{itemize}
		\item A corrected formula for the optimal velocity $v_t = \lambda_\varepsilon |\nabla \phi_t|^{q(x)-2} \nabla \phi_t$;
		\item An energy estimate controlling the difference between $\Wb$ and $\Wp$-geodesics;
		\item The introduction of a modified R\'enyi entropy that cancels the logarithmic divergence;
		\item A sharp equivalence: $\Ric_g + \nabla^2\varepsilon \ge K$ if and only if the modified R\'enyi entropy is $K$-convex along $\Wp$ geodesics up to $O(\|\varepsilon\|_{C^2}\Wp^2)$;
		\item Perturbed Log-Sobolev and Talagrand inequalities demonstrating the robustness of these functional inequalities under perturbations of the transport exponent.
	\end{itemize}
	
	The main message is that the Ricci curvature remains readable in variable exponent Wasserstein spaces, up to a small correction proportional to $\|\varepsilon\|_\infty$, and that the classical functional inequalities survive with degraded constants.
	
	This work opens several directions for future research, including the extension to non-compact manifolds, the study of optimal constants, and the investigation of more general cost functions.
	

\end{document}